	\setlist{nosep} \usepackage{color}
	\pgfplotsset{compat=newest}
	\pgfplotsset{plot coordinates/math parser=false}
\newlength\figureheight
\newlength\figurewidth
\let\LN@align\align
\let\LN@endalign\endalign
\renewcommand{\align}{\linenomath\LN@align}
\renewcommand{\endalign}{\LN@endalign\endlinenomath}
\let\LN@gather\gather
\let\LN@endgather\endgather
\renewcommand{\gather}{\linenomath\LN@gather}
\renewcommand{\endgather}{\LN@endgather\endlinenomath}
\begin{document}
\mainmatter              \title{Tight two-level convergence of Linear Parareal and MGRIT: Extensions and implications
in practice}
\titlerunning{Two-level convergence of Linear Parareal and MGRIT}
\author{Ben S. Southworth\inst{1} \and
	Wayne Mitchell\inst{2} \and
	Andreas Hessenthaler \and
	Federico Danieli\inst{3}}
\authorrunning{Southworth, Mitchell, Hessenthaler, and Danieli} \tocauthor{Ben S. Southworth, Wayne Mitchell, Andreas Hessenthaler, and Federico Danieli}
\institute{Los Alamos National Laboratory, Los Alamos NM, 87544, USA,
	\email{southworth@lanl.gov}
\and
	Heidelberg University, Heidelberg, Germany
\and
	University of Oxford, Oxford, England}

\maketitle              
\begin{abstract}
Two of the most popular parallel-in-time methods are Parareal and
multigrid-reduction-in-time (MGRIT). Recently, a general convergence theory was
developed in Southworth (2019) for linear two-level MGRIT/Parareal that provides
necessary and sufficient conditions for convergence, with tight bounds on
worst-case convergence factors. This paper starts by providing a new and
simplified analysis of linear error and residual propagation of Parareal,
wherein the norm of error or residual propagation is given by one over the
minimum singular value of a certain block bidiagonal operator. New discussion is
then provided on the resulting necessary and sufficient conditions for
convergence that arise by appealing to block Toeplitz theory as in Southworth
(2019). Practical applications of the theory are discussed, and the convergence
bounds demonstrated to predict convergence in practice to high accuracy on two
standard linear hyperbolic PDEs: the advection(-diffusion) equation, and the
wave equation in first-order form.
\end{abstract}

\section{Background} \label{sec:background}

Two of the most popular parallel-in-time methods are Parareal \cite{Parareal} and
multigrid-reduction-in-time (MGRIT) \cite{Falgout:2014}.
Convergence of Parareal/two-level MGRIT has been considered in a number of papers
\cite{Dobrev:2017,Bal:2005cw,Gander:2007jq,Gander:2008bt,Ruprecht:2014gd,Wu:2015ht,WU15,MGRIT19,19c_mgrit}.
Recently, a general convergence theory was developed for linear two-level MGRIT/Parareal
that provides necessary and sufficient conditions for convergence, with tight bounds
on worst-case convergence factors \cite{southworth19}, and does not rely on assumptions
of diagonalizability of the underlying operators. \Cref{sec:simple} provides a simplified
analysis of linear Parareal and MGRIT that expresses the norm of error or residual propagation of two-level
linear Parareal and MGRIT precisely as one over the minimum singular value of a
certain block bidiagonal operator (rather than the more involved pseudoinverse
approach used in \cite{southworth19}). We then provide new discussion on the
resulting necessary and sufficient conditions for convergence that arise by
appealing to block Toeplitz theory \cite{southworth19}. In addition, the
framework developed in \cite{southworth19} is extended to provide necessary
conditions for the convergence of a single iteration, followed by a discussion
putting this in the context of multilevel convergence in \Cref{sec:theory:single}.
Practical applications of the theory are discussed in \Cref{sec:imag}, and the convergence bounds
demonstrated to predict convergence in practice to high accuracy on two standard linear
hyperbolic PDEs, the advection(-diffusion) equation, and the wave equation
in first-order form.

\section{Two-level convergence} \label{sec:simple}

\subsection{A linear algebra framework} \label{sec:simple:framework}

Consider time-integration with $N$ discrete time points. Let $\Phi(t)$ be a time-dependent, linear,
and stable time-propagation operator, with subscript $\ell$ denoting $\Phi_\ell := \Phi(t_\ell)$,
and let $\mathbf{u}_\ell$ denote the (spatial) solution at time-point $t_\ell$. Then, consider the
resulting space-time linear system,
\begin{align}\label{eq:system2}
A\mathbf{u} = \begin{bmatrix} I \\ -\Phi_1 & I \\ & -\Phi_2 & I \\ & & \ddots & \ddots \\
	& & & -\Phi_{N-1} & I \end{bmatrix}
	\begin{bmatrix} \mathbf{u}_0 \\ \mathbf{u}_1\\ \mathbf{u}_2 \\ \vdots \\ \mathbf{u}_{N-1}\end{bmatrix} = \mathbf{f},
\end{align}
Clearly \eqref{eq:system2} can be solved directly using block forward substitution, which
corresponds to standard sequential time stepping. Linear Parareal and MGRiT are reduction-based
multigrid methods, which solve \eqref{eq:system2} in a parallel iterative manner. 

First, note there is a closed form inverse for matrices with the form in \eqref{eq:system2},
which will prove useful for further derivations. Excusing the slight abuse of notation,
define $\Phi_{i}^{j} := \Phi_i\Phi_{i-1}...\Phi_j$. Then,
\begin{align}\label{eq:geninv}
\begin{bmatrix} I \\ -\Phi_1 & I \\ & -\Phi_2 & I \\ & & \ddots & \ddots \\ & & & -\Phi_{N-1} & I \end{bmatrix}^{-1}
	& =
\begin{bmatrix} I \\ \Phi_1 & I \\
	\Phi_2^1 & \Phi_2 & I \\
	\Phi_3^1 & \Phi_3^2 & \Phi_3 & I \\ 
	\vdots & \vdots  & & \ddots  & \ddots \\
	\Phi_{N-1}^1 & \Phi_{N-1}^2 & ... & ... & \Phi_{N-1} & I
\end{bmatrix}.
\end{align}
Now suppose we coarsen in time by a factor of $k$, that is, every $k$th time-point we denote a C-point,
and the $k-1$ points between each C-point are considered F-points (it is not necessary that $k$ be
fixed across the domain, rather this is a simplifying assumption for derivations and notation).
Then, using the inverse in \eqref{eq:geninv} and analogous matrix derivations as in \cite{southworth19},
we can eliminate F-points from \eqref{eq:system2} and arrive at a Schur complement
of $A$ over C-points, given by
\begin{align}\label{eq:schur}
A_\Delta = \begin{bmatrix} I \\ -\Phi_k^1 & I \\ & -\Phi_{2k}^{k+1} & I \\ & & \ddots & \ddots \\
	& & & -\Phi_{(N_c-1)k}^{(N_c-2)k+1} & I \end{bmatrix}.
\end{align}
Notice that the Schur-complement coarse-grid operator in the time-dependent case does exactly what it
does in the time-independent case: it takes $k$ steps on the fine grid, in this case using the appropriate
sequence of time-dependent operators.

A Schur complement arises naturally in reduction-based methods when we eliminate certain
degrees-of-freedom (DOFs). In this case, even computing the action of the Schur complement
\eqref{eq:schur}
is too expensive to be considered tractable. Thus, parallel-in-time integration is based on
a Schur complement approximation, where we let $\Psi_i$ denote a non-Galerkin approximation
to $\Phi_{ik}^{(i-1)k+1}$ and define the ``coarse-grid'' time integration operator 
$B_\Delta\approx A_\Delta$ as
\begin{align*}
B_\Delta = \begin{bmatrix} I \\ -\Psi_1 & I \\ & -\Psi_2 & I \\ & & \ddots & \ddots \\
	& & & -\Psi_{N_c-1} & I \end{bmatrix}.
\end{align*}

Convergence of iterative methods is typically considered by analyzing the error and
residual-propagation operators, say $\mathcal{E}$ and $\mathcal{R}$. Necessary and
sufficient conditions for convergence of an iterative method are that $\mathcal{E}^p,
\mathcal{R}^p\to 0$ with $p$. In this case, eigenvalues of $\mathcal{E}$ and $\mathcal{R}$
are a poor measure of convergence, so we consider the $\ell^2$-norm. For notation, assume
we block partition $A$ \eqref{eq:system2} into C-points and F-points and reorder
$A \mapsto \begin{bmatrix}A_{ff} & A_{fc} \\ A_{cf} & A_{cc}\end{bmatrix}$. Letting
subscript $F$ denote F-relaxation and subscript FCF denote FCF-relaxation, error and
residual propagation operators for two-level Parareal/MGRiT are derived in
\cite{southworth19} to be:
\begin{align}\label{eq:err_res}
\begin{split}
\mathcal{E}_F^p & = \begin{bmatrix} -A_{ff}^{-1}A_{fc} \\ I \end{bmatrix}
	\begin{bmatrix} \mathbf{0} & ({I - B_\Delta^{-1}A_\Delta} )^p \end{bmatrix}, \\
\mathcal{E}_{FCF}^p & = \begin{bmatrix} -A_{ff}^{-1}A_{fc} \\ I \end{bmatrix}
	\begin{bmatrix} \mathbf{0} & \left({(I - B_\Delta^{-1}A_\Delta })
		(I - A_\Delta)\right)^p \end{bmatrix}, \\
\mathcal{R}_F^p & = \begin{bmatrix} \mathbf{0} \\ (I - A_\Delta B_\Delta^{-1})^p \end{bmatrix}
	\begin{bmatrix} -A_{cf}A_{ff}^{-1} & I \end{bmatrix}, \\
\mathcal{R}_{FCF}^p & = \begin{bmatrix}\mathbf{0} \\ 
		\left((I - A_\Delta B_\Delta^{-1})(I - A_\Delta)\right)^p\end{bmatrix}
	\begin{bmatrix} -A_{cf}A_{ff}^{-1} & I \end{bmatrix}.
\end{split}
\end{align}
In \cite{southworth19}, the leading terms involving $A_{cf}A_{ff}^{-1}$ and
$A_{ff}^{-1}A_{fc}$ (see \cite{southworth19} for representation in $\Phi$) are
shown to be bounded in norm $\leq k$. Thus, as we iterate $p > 1$, convergence of
error and residual propagation operators is defined by iterations on the coarse
space, e.g., $({I - B_\Delta^{-1}A_\Delta} )$ for $\mathcal{E}_F$. 
To that end, define
\begin{align}
\begin{split}
\widetilde{\mathcal{E}}_F & = {I - B_\Delta^{-1}A_\Delta} , \hspace{5ex}
\widetilde{\mathcal{E}}_{FCF} = (I - B_\Delta^{-1}A_\Delta)(I - A_\Delta), \\
\widetilde{\mathcal{R}}_F & = I - A_\Delta B_\Delta^{-1}, \hspace{5ex}
\widetilde{\mathcal{R}}_{FCF} = (I - A_\Delta B_\Delta^{-1})(I - A_\Delta).
\end{split}
\end{align}

\subsection{A closed form for $\|\widetilde{\mathcal{E}}\|$ and $\|\widetilde{\mathcal{R}}\|$}
\label{sec:simple:norm}

\subsubsection{F-relaxation:} \label{sec:simple:norm:F}

Define the shift operators and block diagonal matrix
{\small
\begin{align*}
I_L & = \begin{bmatrix} \mathbf{0} \\ I & \mathbf{0} \\ & \ddots & \ddots \\ && I & \mathbf{0} \end{bmatrix},
\hspace{1ex}
I_z & = \begin{bmatrix} I & \\ & \ddots  \\ && I  \\ &&& \mathbf{0} \end{bmatrix},
\hspace{1ex}
D = \begin{bmatrix} \Phi_k^1 - \Psi_1 \\ & \ddots \\ &&  \Phi_{(N_c-1)k}^{(N_c-2)k+1} - \Psi_{N_c-1} \\
	&&& \mathbf{0} \end{bmatrix}
\end{align*}
}and notice that $I_LD = (B_\Delta - A_\Delta)$ and $I_L^TI_L = I_z$. Further define $\widetilde{D}$
and $\widetilde{B}_\Delta$ as the leading principle submatrices of $D$ and $B_\Delta$, respectively,
obtained by eliminating the last (block) row and column, corresponding to the final coarse-grid
time step, $N_c-1$.

Now note that $\widetilde{\mathcal{R}}_F := I-A_\Delta B_\Delta^{-1} =
(B_\Delta - A_\Delta)B_\Delta^{-1} = I_LDB_\Delta^{-1}$ and $I_L^TI_L = I_z$. Then,
\begin{align}\label{eq:Rf0}
\|\mathcal{R}_F\|^2 & = \sup_{\mathbf{x}\neq\mathbf{0}} \frac{\langle I_LDB_\Delta^{-1} \mathbf{x},
	I_LDB_\Delta^{-1}\mathbf{x}\rangle}{\langle \mathbf{x}, \mathbf{x}\rangle}
= \sup_{\mathbf{x}\neq\mathbf{0}} \frac{\langle I_zDB_\Delta^{-1} \mathbf{x},
	I_zDB_\Delta^{-1}\mathbf{x}\rangle}{\langle \mathbf{x}, \mathbf{x}\rangle}.
\end{align}
Because $DB_\Delta^{-1}$ is lower triangular, setting the last row to zero by multiplying
by $I_z$ also sets the last column to zero, in which case $I_zDB_\Delta^{-1} = 
I_zDB_\Delta^{-1}I_z$. Continuing from \eqref{eq:Rf0}, we have
\begin{align*}
\|\mathcal{R}_F\|^2
& =  \sup_{\mathbf{y}\neq\mathbf{0}} \frac{\langle \widetilde{D}\widetilde{B}_\Delta^{-1}\mathbf{y},
	\widetilde{D}\widetilde{B}_\Delta^{-1}\mathbf{y}\rangle}{\langle \mathbf{y}, \mathbf{y}\rangle}
= \|\widetilde{D}\widetilde{B}_\Delta^{-1}\|^2,
\end{align*}
where $\mathbf{y}$ is defined on the lower-dimensional space corresponding to the
operators $\widetilde{D}$ and $\widetilde{B}_\Delta$. Recalling that the $\ell^2$-norm
is defined by $\|A\| = \sigma_{\max}(A) = 1/\sigma_{\min}(A^{-1})$, for maximum and minimum
singular values, respectively,
\begin{align*}
\|\widetilde{\mathcal{R}}_F\| & = \sigma_{\max}\left(\widetilde{D}\widetilde{B}_\Delta^{-1}\right)
	= \max_{\mathbf{x}\neq\mathbf{0}} \frac{\|\widetilde{D}\mathbf{x}\|}{\|\widetilde{B}_\Delta\mathbf{x}\|}
	= \frac{1}{\sigma_{\min}\left(\widetilde{B}_\Delta\widetilde{D}^{-1}\right)},
\end{align*}
where
\begin{align*}
\widetilde{B}_\Delta\widetilde{D}^{-1}:&= \begin{bmatrix} I \\ -\Psi_1 & I \\ & \ddots & \ddots \\
		&& -\Psi_{N_c-2} & I \end{bmatrix}
	\begin{bmatrix} \left(\Phi_{k}^{1}-\Psi_{1}\right)^{-1} \\ 
	& \ddots \\ && \left(\Phi_{(N_c-1)k}^{(N_c-2)k+1}-\Psi_{N_c-1}\right)^{-1} \end{bmatrix}.
\end{align*}

Similarly, $\widetilde{\mathcal{E}}_F := I - B_\Delta^{-1}A_\Delta = B_\Delta^{-1}I_LD$.
Define $\widehat{B}_\Delta$ as the principle submatrix of $B_\Delta$ obtained by eliminating
the \textit{first} row and column. Similar arguments as above then yield
\begin{align*}
\|\widetilde{\mathcal{E}}_F\| & = \sigma_{\max}\left(\widehat{B}_\Delta^{-1}\widetilde{D}\right)
	= \max_{\mathbf{x}\neq\mathbf{0}} \frac{\|\widehat{B}_\Delta^{-1}\mathbf{x}\|}
		{\|\widetilde{D}^{-1}\mathbf{x}\|}
	= \frac{1}{\sigma_{\min}\left(\widetilde{D}^{-1}\widehat{B}_\Delta\right)},
\end{align*}
where
\begin{align*}
\widetilde{D}^{-1}\widehat{B}_\Delta:&= \begin{bmatrix} \left(\Phi_{k}^{1}-\Psi_{1}\right)^{-1} \\ 
	& \ddots \\ && \left(\Phi_{(N_c-1)k}^{(N_c-2)k+1}-\Psi_{N_c-1}\right)^{-1} \end{bmatrix}
	\begin{bmatrix} I \\ -\Psi_2 & I \\ & \ddots & \ddots \\
		&& -\Psi_{N_c-1} & I \end{bmatrix}.
\end{align*}

\subsubsection{FCF-relaxation:} \label{sec:simple:norm:F}

Adding the effects of (pre)FCF-relaxation, $\mathcal{E}_{FCF} =
B_\Delta^{-1}(B_\Delta - A_\Delta)(I - A_\Delta)$, where
$(B_\Delta - A_\Delta)(I - A_\Delta)$ is given by the block diagonal matrix
\begin{align*}
= I_L^2 \begin{bmatrix} \left(\Phi_k^1 - \Psi_L\right)\Phi_k^1 \\ & \ddots \\ &&  
		\left(\Phi_{(N_c-2)k}^{(N_c-3)k+1}-\Psi_{N_c-2}\right)\Phi_{(N_c-2)k}^{(N_c-3)k+1}\\
	&&& \mathbf{0} \\ &&&&\mathbf{0} \end{bmatrix}.
\end{align*}
Again analogous arguments as for F-relaxation can pose this as a problem on a nonsingular
matrix of reduced dimensions. Let
\begin{align*}
&\widetilde{D}_{fcf}^{-1}\overline{B}_\Delta := \\
&	\begin{bmatrix} \left(\Phi_k^1 - \Psi_L\right)\Phi_k^1 \\ & \ddots \\ &&  
		\left(\Phi_{(N_c-2)k}^{(N_c-3)k+1}-\Psi_{N_c-2}\right)\Phi_{(N_c-2)k}^{(N_c-3)k+1}\end{bmatrix}^{-1}
	\begin{bmatrix} I \\ -\Psi_3 & I \\ & \ddots & \ddots \\
		&& -\Psi_{N_c-1} & I \end{bmatrix}.
\end{align*}
Then
\begin{align*}
\|\widetilde{\mathcal{E}}_{FCF}\| & = \sigma_{\max}\left(\overline{B}_\Delta^{-1}\widetilde{D}_{fcf}\right)
	= \max_{\mathbf{x}\neq\mathbf{0}} \frac{\|\overline{B}_\Delta^{-1}\mathbf{x}\|}
		{\|\widetilde{D}_{fcf}^{-1}\mathbf{x}\|}
 	= \frac{1}{\sigma_{\min}\left(\widetilde{D}_{fcf}^{-1}\overline{B}_\Delta\right)}.
\end{align*}
Here we have an interesting thing to note -- in adding FCF-relaxation, our coarse-grid
approximation, e.g., $(\Phi_k^1 - \Psi_1)$ must be accurate with respect to the later in time 
operator $I - \Psi_3$. The case of FCF relaxation with residual propagation produces operators with a more complicated definition, in particular, requiring diagonal scalings on both sides of $I-A_\Delta B_\Delta^-1$. The analysis, however, proceeds in a similar fashion. Moreover, considering only post-FCF relaxation again shifts all the scalings on a single side.

\subsection{Convergence and the temporal approximation property}\label{sec:theory:tap}

Now assume that $\Phi_i=\Phi_j$ and $\Psi_i=\Psi_j$ for all $i,j$, that is,
$\Phi$ and $\Psi$ are independent of time. Then, the block matrices derived in
the previous section defining convergence of two-level MGRiT all fall under
the category of block-Toeplitz matrices. By appealing to block-Toepliz theory, 
in \cite{southworth19}, tight bounds are derived on the appropriate minimum
and maximum singular values appearing in the previous section, which are
exact as the number of coarse-grid time steps $\to\infty$.
The fundamental concept is the ``temporal approximation property'' (TAP), as introduced
below, which provides necessary and sufficient conditions for two-level convergence
of Parareal and MGRiT. Moreover, the constant with which the TAP is
satisfied, $\varphi_F$ or $\varphi_{FCF}$, provides a tight upper bound
on convergence factors, that is asymptotically exact as $N_c\to\infty$.
We present a simplified/condensed version of the theoretical results
derived in \cite{southworth19} in \Cref{th:conv}.

\begin{definition}[Temporal approximation property]
Let $\Phi$ denote a fine-grid time-stepping operator and $\Psi$ denote a coarse-grid time-stepping
operator, for all time points, with coarsening factor $k$. Then, $\Phi$ satisfies an F-relaxation
temporal approximation property (F-TAP), with respect to $\Psi$, with constant $\varphi_{F}$,
if, for all vectors $\mathbf{v}$,
\begin{align}\label{eq:tap_f}
\|(\Psi - \Phi^k)\mathbf{v}\| \leq \varphi_{F} \left[\min_{x\in[0,2\pi]}
	\left\| (I - e^{\mathrm{i}x}\Psi )\mathbf{v}\right\| \right].
\end{align}
Similarly, $\Phi$ satisfies an FCF-relaxation temporal approximation property (FCF-TAP), with
respect to $\Psi$, with constant $\varphi_{FCF}$, if, for all vectors $\mathbf{v}$,
\begin{align}\label{eq:tap_fcf}
\|(\Psi - \Phi^k)\mathbf{v}\| \leq \varphi_{FCF}\left[\min_{x\in[0,2\pi]}
	\left\| (\Phi^{-k}(I - e^{\mathrm{i}x}\Psi) )\mathbf{v}\right\| \right].
\end{align}
\end{definition}

\begin{theorem}[Necessary and sufficient conditions]\label{th:conv}
Suppose $\Phi$ and $\Psi$ are linear, stable ($\|\Phi^p\|,\|\Psi^p\| < 1$ for some $p$), and independent of time; and that $(\Psi-\Phi^k)$ is invertible. Further suppose that $\Phi$ satisfies an F-TAP with respect
to $\Psi$, with constant $\varphi_{F}$, and $\Phi$ satisfies an FCF-TAP with respect to $\Psi$, with constant
$\varphi_{FCF}$. Then, \textit{worst-case convergence} of residual is exactly bounded by
\begin{align*}
\frac{\varphi_{F}}{1+O(1/\sqrt{N_c})} & \leq
	\frac{\|\mathbf{r}_{i+1}^{(F)}\|}{\|\mathbf{r}_{i}^{(F)}\|} < \varphi_{F}, \\
\frac{\varphi_{FCF}}{1+O(1/\sqrt{N_c})} & \leq
	\frac{\|\mathbf{r}_{i+1}^{(FCF)}\|}{\|\mathbf{r}_{i}^{(FCF)}\|} < \varphi_{FCF}
\end{align*}
for iterations $i > 1$ (i.e., not the first iteration).
\end{theorem}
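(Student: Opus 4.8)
The plan is to reduce the worst-case per-iteration residual factor to the single-number quantity $\|\widetilde{\mathcal{R}}_F\|$ (respectively $\|\widetilde{\mathcal{R}}_{FCF}\|$), and then to evaluate that norm asymptotically via block-Toeplitz theory, identifying its limiting value with the TAP constant. First I would use the residual-propagation formulas in \eqref{eq:err_res}: for F-relaxation, $\mathcal{R}_F\mathbf{r}$ has identically zero F-component, so once past the first iteration the residual lives entirely on C-points and propagates there by $I-A_\Delta B_\Delta^{-1}$; hence for such iterations $\|\mathbf{r}_{i+1}\|/\|\mathbf{r}_i\| = \|(I-A_\Delta B_\Delta^{-1})(\mathbf{r}_i)_c\|/\|(\mathbf{r}_i)_c\| \le \|\widetilde{\mathcal{R}}_F\|$. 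By the closed form of the previous subsection, $\|\widetilde{\mathcal{R}}_F\| = \|\widetilde{D}\widetilde{B}_\Delta^{-1}\| = 1/\sigma_{\min}(\widetilde{B}_\Delta\widetilde{D}^{-1})$, and since $\widetilde{D}$ (by invertibility of $\Psi-\Phi^k$) and $\widetilde{B}_\Delta$ (unit lower triangular) are invertible, one can choose initial data supported on C-points that is mapped onto the leading right singular vector of $\widetilde{D}\widetilde{B}_\Delta^{-1}$, so the worst-case factor over iterations $i>1$ \emph{equals} $\|\widetilde{\mathcal{R}}_F\|$. The identical argument applied to $\mathcal{R}_{FCF}$ reduces the FCF case to $\|\widetilde{\mathcal{R}}_{FCF}\|$.

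Second, under time-independence $\widetilde{B}_\Delta\widetilde{D}^{-1}$ is the $(N_c-1)$-block leading principal section of the infinite lower-bidiagonal block-Toeplitz operator $T$ with analytic symbol $g(z) = (I-z\Psi)(\Phi^k-\Psi)^{-1}$. Stability of $\Psi$ forces its spectral radius below $1$, so $g(z)$ is invertible for every $|z|\le 1$; then $\det g$ has no zeros in the closed disk, its winding number on the unit circle is $0$, the finite-section method is stable, and $T$ is invertible with $T^{-1}$ a full (analytic) lower-triangular block-Toeplitz operator. Because $T$ is lower triangular, $(\widetilde{B}_\Delta\widetilde{D}^{-1})^{-1}$ is exactly the $(N_c-1)$-block leading section $P_{N_c-1}T^{-1}P_{N_c-1}$ of $T^{-1}$, so $\|\widetilde{\mathcal{R}}_F\| = \|P_{N_c-1}T^{-1}P_{N_c-1}\| \le \|T^{-1}\|$, with strict inequality for finite $N_c$ (since $T^{-1}$ is not finitely banded unless $\Psi=0$) and $\|\widetilde{\mathcal{R}}_F\| \to \|T^{-1}\|$ as $N_c\to\infty$.

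Third, the limiting value is the TAP constant. By the operator-valued maximum-modulus principle, $\|T^{-1}\| = \sup_{|z|\le 1}\|g(z)^{-1}\| = \max_{x\in[0,2\pi]} 1/\sigma_{\min}(g(e^{\mathrm{i}x}))$, and the substitution $\mathbf{w}=(\Phi^k-\Psi)^{-1}\mathbf{v}$ gives $\sigma_{\min}(g(e^{\mathrm{i}x})) = \min_{\mathbf{w}\neq\mathbf{0}}\|(I-e^{\mathrm{i}x}\Psi)\mathbf{w}\|/\|(\Phi^k-\Psi)\mathbf{w}\|$, so $\max_x 1/\sigma_{\min}(g(e^{\mathrm{i}x}))$ is precisely the smallest $\varphi_F$ for which \eqref{eq:tap_f} holds. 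This gives $\|\widetilde{\mathcal{R}}_F\| < \varphi_F$ at once. For the matching lower bound I would take $x^\star$ and a unit vector $\mathbf{v}^\star$ attaining $\sigma_{\min}(g(x^\star)) = 1/\varphi_F$ and test $\widetilde{B}_\Delta\widetilde{D}^{-1}$ against the block vector with entries $e^{\mathrm{i}nx^\star}\mathbf{v}^\star$, $n=0,\dots,N_c-2$: the deviation from $g(x^\star)$ acting blockwise is concentrated at the two ends and is $O(1)$ against a vector of norm $\Theta(\sqrt{N_c})$, yielding $\sigma_{\min}(\widetilde{B}_\Delta\widetilde{D}^{-1}) \le \tfrac{1}{\varphi_F}\big(1+O(1/\sqrt{N_c})\big)$, i.e. $\|\widetilde{\mathcal{R}}_F\| \ge \varphi_F/(1+O(1/\sqrt{N_c}))$. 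The FCF case runs in parallel once the extra $\Phi^{-k}$ from FCF-relaxation is carried through, the diagonal scalings on both sides of $I-A_\Delta B_\Delta^{-1}$ are rearranged by a similarity into the form whose symbol has smallest singular value equal to the right-hand side of \eqref{eq:tap_fcf}, and stability of $\Phi$ is used to make $\Phi^{-k}$ meaningful on the relevant subspace.

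I expect the block-Toeplitz step to be the main obstacle: rigorously establishing invertibility of the infinite one-sided operator $T$, convergence of the finite sections to it in inverse norm, and—most delicately—the $O(1/\sqrt{N_c})$ rate, which needs the careful boundary/windowing estimate for the trial vector and is exactly the quantitative content imported from \cite{southworth19}. A secondary technical nuisance is the bookkeeping for FCF residual propagation, where one must verify that the singular-value identity from the preceding subsection survives the two-sided-to-one-sided rearrangement and lands on the symbol matching \eqref{eq:tap_fcf} rather than its error-propagation counterpart.
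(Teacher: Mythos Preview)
Your proposal is correct and follows precisely the route the paper lays out: the paper does not give a self-contained proof of \Cref{th:conv} but presents it as a condensed restatement of results from \cite{southworth19}, after first reducing (in \Cref{sec:simple:norm}) the worst-case residual factor past the first iteration to $\|\widetilde{\mathcal{R}}_F\| = 1/\sigma_{\min}(\widetilde{B}_\Delta\widetilde{D}^{-1})$. Your sketch uses exactly this reduction and then fills in the block-Toeplitz step the paper delegates to \cite{southworth19}, correctly identifying the symbol $g(z)=(I-z\Psi)(\Phi^k-\Psi)^{-1}$, the identity $\sup_x\|g(e^{\mathrm{i}x})^{-1}\|=\varphi_F$, the strict upper bound via finite sections of the lower-triangular Toeplitz inverse, and the trial-vector lower bound; you also correctly flag the $O(1/\sqrt{N_c})$ rate and the FCF bookkeeping as the places where the technical weight of \cite{southworth19} enters.

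One minor remark worth noting: the original proof in \cite{southworth19} proceeds via an explicit pseudoinverse construction (as the paper mentions in its introduction and as is visible in the appendix proof of \Cref{th:new2grid}), whereas you exploit the new singular-value identity of \Cref{sec:simple:norm} directly. This is a genuine simplification and is exactly the point of the present paper's \Cref{sec:simple}---your argument is in effect the proof the paper is advertising but does not write out, rather than a reproduction of the pseudoinverse route in \cite{southworth19}. Also, your trial-vector computation in fact yields the sharper $O(1/N_c)$ perturbation when $\varphi_F$ is held fixed (cf.\ the discussion in the proof of \Cref{cor:new2grid}); the $O(1/\sqrt{N_c})$ in the theorem statement is the uniform rate quoted from the operator-valued block-Toeplitz estimates in \cite{southworth19}.
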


Broadly, the TAP defines how well $k$ steps of the the fine-grid time-stepping
operator, $\Phi^k$, must approximate the action of the coarse-grid operator,
$\Psi$, for convergence.\footnote{
There are some nuances regarding error vs. residual and powers of operators/multiple
iterations. We refer the interested reader to
\cite{southworth19} for details.}
An interesting property of the TAP is the introduction of a complex scaling of $\Psi$,
even in the case of real-valued operators. If we suppose $\Psi$ has imaginary eigenvalues,
the minimum over $x$ can be thought of as rotating this eigenvalue to the real axis.
Recall from \cite{southworth19}, the TAP for
a fixed $x_0$ can be computed as the largest generalized singular value of
$\left\{ \Psi - \Phi^k, I - e^{\mathrm{i}x_0}\Psi\right\}$ or, equivalently, the largest
singular value of $(\Psi - \Phi^k)(I - e^{\mathrm{i}x_0}\Psi)^{-1}$. Although there
are methods to compute this directly or iteratively, e.g., \cite{zwaan:2017}, minimizing
the TAP for all $x\in[0,2\pi]$ is expensive and often impractical. The following lemma
and corollaries introduce a closed form for the minimum over $x$, and simplified
sufficient conditions for convergence that do not require a minimization or complex
operators.

\begin{lemma}\label{lem:complex}
Suppose $\Psi$ is real-valued. Then,
\begin{align}
\min_{x\in[0,2\pi]} \|(I - e^{\mathrm{i}x}\Psi)\mathbf{v}\|^2 & =
	 \|\mathbf{v}\|^2 + \|\Psi\mathbf{v}\|^2 -
	 	2\left|\langle\Psi\mathbf{v},\mathbf{v} \rangle\right|, \label{eq:TAPmin} \\
\min_{x\in[0,2\pi]} \|\Phi^{-k}(I - e^{\mathrm{i}x}\Psi)\mathbf{v}\|^2 & =
	 \|\Phi^{-k}\mathbf{v}\|^2 + \|\Phi^{-k}\Psi\mathbf{v}\|^2 -
	 	2\left|\langle\Phi^{-k}\Psi\mathbf{v},\Phi^{-k}\mathbf{v} \rangle\right|. \nonumber
\end{align}
\end{lemma}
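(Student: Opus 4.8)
The plan is to prove both identities by a direct expansion of the squared norm, followed by a one-variable optimization over the phase $x$. I will treat the first identity in detail; the second follows by applying the identical argument to the vector $\Phi^{-k}\mathbf{v}$ in place of $\mathbf{v}$ and to the operator $\Phi^{-k}\Psi$ in place of $\Psi$.

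First I would expand, using sesquilinearity of the inner product,
\begin{align*}
\|(I - e^{\mathrm{i}x}\Psi)\mathbf{v}\|^2
  &= \langle \mathbf{v} - e^{\mathrm{i}x}\Psi\mathbf{v},\, \mathbf{v} - e^{\mathrm{i}x}\Psi\mathbf{v}\rangle \\
  &= \|\mathbf{v}\|^2 + \|\Psi\mathbf{v}\|^2
     - e^{\mathrm{i}x}\langle \Psi\mathbf{v},\mathbf{v}\rangle
     - e^{-\mathrm{i}x}\langle \mathbf{v},\Psi\mathbf{v}\rangle .
\end{align*}
Setting $z := \langle \Psi\mathbf{v},\mathbf{v}\rangle$ and using $\langle \mathbf{v},\Psi\mathbf{v}\rangle = \overline{z}$, the two cross terms collapse to $-2\,\mathrm{Re}(e^{\mathrm{i}x} z)$, so that
\[
  \|(I - e^{\mathrm{i}x}\Psi)\mathbf{v}\|^2 = \|\mathbf{v}\|^2 + \|\Psi\mathbf{v}\|^2 - 2\,\mathrm{Re}\!\left(e^{\mathrm{i}x} z\right).
\]
Minimizing the right-hand side over $x\in[0,2\pi]$ amounts to maximizing $\mathrm{Re}(e^{\mathrm{i}x}z)$: writing $z = |z|e^{\mathrm{i}\theta}$ gives $\mathrm{Re}(e^{\mathrm{i}x}z) = |z|\cos(x+\theta)$, a continuous function on the compact interval $[0,2\pi]$ whose maximal value $|z|$ is attained at $x = -\theta \bmod 2\pi$. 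Hence the minimum of the squared norm equals $\|\mathbf{v}\|^2 + \|\Psi\mathbf{v}\|^2 - 2|z| = \|\mathbf{v}\|^2 + \|\Psi\mathbf{v}\|^2 - 2|\langle\Psi\mathbf{v},\mathbf{v}\rangle|$, which is \eqref{eq:TAPmin}. For the second identity I would write $\Phi^{-k}(I - e^{\mathrm{i}x}\Psi)\mathbf{v} = \Phi^{-k}\mathbf{v} - e^{\mathrm{i}x}(\Phi^{-k}\Psi)\mathbf{v}$ and repeat the computation verbatim with $\mathbf{v}\mapsto \Phi^{-k}\mathbf{v}$ and $\Psi\mathbf{v}\mapsto(\Phi^{-k}\Psi)\mathbf{v}$; note that $\Phi^{-k}$ and $\Psi$ need not commute, but the argument never uses this, as it only manipulates the single vector $(\Phi^{-k}\Psi)\mathbf{v}$ and the scalar $\langle (\Phi^{-k}\Psi)\mathbf{v}, \Phi^{-k}\mathbf{v}\rangle$.

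There is essentially no hard step here: the only points requiring care are fixing the convention for the sesquilinear inner product (so that the cross terms combine cleanly into $2\,\mathrm{Re}(e^{\mathrm{i}x}z)$ without a spurious sign), and observing that the phase optimization ranges over a full period, so the optimal alignment $x = -\theta$ always lies in $[0,2\pi]$ and the extremum is attained. I would also remark that the hypothesis that $\Psi$ be real-valued is not actually invoked in the algebra — the identities hold for any bounded $\Psi$ and any (possibly complex) $\mathbf{v}$ — but it is the relevant setting for the corollaries that follow, where one wishes to lower-bound $|\langle\Psi\mathbf{v},\mathbf{v}\rangle|$ without reference to complex scalings.
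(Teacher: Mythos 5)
Your proof is correct and follows the same basic route as the paper: expand $\|(I-e^{\mathrm{i}x}\Psi)\mathbf{v}\|^2$ via sesquilinearity and minimize the $x$-dependent cross term over one period. Where you differ is in how that minimization is executed. The paper splits $\mathbf{v}=\mathbf{v}_r+\mathrm{i}\mathbf{v}_i$, writes the cross term as $2\cos(x)\mathcal{R}+2\sin(x)\mathcal{I}$ with $\mathcal{R},\mathcal{I}$ real inner products of $\mathbf{v}_r,\mathbf{v}_i$ (this is where real-valuedness of $\Psi$ enters), and then differentiates to locate $x_0=\arctan(\mathcal{I}/\mathcal{R})$. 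You instead collapse the two cross terms to $-2\,\mathrm{Re}(e^{\mathrm{i}x}z)$ with $z=\langle\Psi\mathbf{v},\mathbf{v}\rangle$ using only conjugate symmetry of the inner product, and read off the extremum $|z|$ by phase alignment. Your version is shorter, avoids the calculus and the case analysis implicit in the $\arctan$ formula (e.g., $\mathcal{R}=0$), and, as you correctly observe, never uses that $\Psi$ is real-valued --- the identity holds for any $\Psi$; the real-valuedness hypothesis only matters for the subsequent symmetry corollary and the restriction-to-real-$x$ approximations. The extension to the FCF case by substituting $\Phi^{-k}\mathbf{v}$ and $(\Phi^{-k}\Psi)\mathbf{v}$ is exactly what the paper intends by ``analogous derivations,'' and your note that no commutativity is needed is a worthwhile clarification.
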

\begin{proof}
Consider satisfying the TAP for real-valued operators and complex vectors. Expanding in inner products,
the TAP is given by
\begin{align*}
\min_{x\in[0,2\pi]} \|(I - e^{\mathrm{i}x}\Psi)\mathbf{v}\|^2 & = \min_{x\in[0,2\pi]}  \|\mathbf{v}\|^2 + \|\Psi\mathbf{v}\|^2 -
  e^{\mathrm{i}x}\langle\Psi\mathbf{v},\mathbf{v}\rangle - e^{-\mathrm{i}x}\langle\mathbf{v},\Psi\mathbf{v}\rangle.
\end{align*}
Now decompose $\mathbf{v}$ into real and imaginary components, $\mathbf{v} = \mathbf{v}_r
+ \mathrm{i}\mathbf{v}_i$ for $\mathbf{v}_i,\mathbf{v}_r\in\mathbb{R}^n$, and note that
\begin{align*}
\langle\Psi\mathbf{v},\mathbf{v} \rangle & = \langle\Psi\mathbf{v}_i,\mathbf{v}_i \rangle +
  \langle\Psi\mathbf{v}_r,\mathbf{v}_r \rangle + \mathrm{i}\langle\Psi\mathbf{v}_i,\mathbf{v}_r \rangle -
  \mathrm{i}\langle\Psi\mathbf{v}_r,\mathbf{v}_i \rangle := \mathcal{R} - \mathrm{i}\mathcal{I}, \\
\langle\mathbf{v},\Psi\mathbf{v} \rangle & = \langle\mathbf{v}_i,\Psi\mathbf{v}_i \rangle +
  \langle\mathbf{v}_r,\Psi\mathbf{v}_r \rangle + \mathrm{i}\langle\mathbf{v}_i,\Psi\mathbf{v}_r \rangle -
  \mathrm{i}\langle\mathbf{v}_r,\Psi\mathbf{v}_i \rangle := \mathcal{R} + \mathrm{i}\mathcal{I}.
\end{align*}
Expanding with $e^{\mathrm{i}x} = \cos(x) + \mathrm{i}\sin(x)$ and
$e^{-\mathrm{i}x} = \cos(x) - \mathrm{i}\sin(x)$ yields
\begin{align}\label{eq:sincos}
\begin{split}
e^{\mathrm{i}x} \langle\Psi\mathbf{v},\mathbf{v} \rangle + e^{-\mathrm{i}x}\langle\mathbf{v},\Psi\mathbf{v} \rangle & =
  2\cos(x)\mathcal{R} + 2\sin(x)\mathcal{I}.
\end{split}
\end{align}
To minimize in $x$, differentiate and set the derivative equal to zero to obtain roots $\{x_0,x_1\}$, where
$x_0 = \arctan \left( \mathcal{I} / \mathcal{R}\right)$ and $x_1 = x_0 + \pi$. Plugging in above yields
\begin{align*}
\min_{x\in[0,2\pi]} \pm \left(e^{\mathrm{i}x} \langle\Psi\mathbf{v},\mathbf{v} \rangle + e^{-\mathrm{i}x}\langle\mathbf{v},\Psi\mathbf{v} \rangle\right)
  & = -2\sqrt{\mathcal{R}^2 + \mathcal{I}^2} \\
&\hspace{-20ex} = -2 \sqrt{\left(\langle\mathbf{v}_i,\Psi\mathbf{v}_i \rangle +
	\langle\mathbf{v}_r,\Psi\mathbf{v}_r \rangle\right)^2 +
    \left(\langle\Psi\mathbf{v}_i,\mathbf{v}_r \rangle - 
    \langle\Psi\mathbf{v}_r,\mathbf{v}_i \rangle\right)^2 } \\
&\hspace{-20ex} = -2\sqrt{\left(\langle\Psi\mathbf{v},\mathbf{v} \rangle\right)^*
	\langle\Psi\mathbf{v},\mathbf{v} \rangle} \\
&\hspace{-20ex} = -2\left|\langle\Psi\mathbf{v},\mathbf{v} \rangle\right|.
\end{align*}
Then,
\begin{align*}
\min_{x\in[0,2\pi]} \|(I - e^{\mathrm{i}x}\Psi)\mathbf{v}\|^2 & = \min_{x\in[0,2\pi]}  \|\mathbf{v}\|^2 + \|\Psi\mathbf{v}\|^2 -
  e^{\mathrm{i}x}\langle\Psi\mathbf{v},\mathbf{v}\rangle - e^{-\mathrm{i}x}\langle\mathbf{v},\Psi\mathbf{v}\rangle \\
& =   \|\mathbf{v}\|^2 + \|\Psi\mathbf{v}\|^2 -2\left|\langle\Psi\mathbf{v},\mathbf{v} \rangle\right|.
\end{align*}
Analogous derivations hold for the FCF-TAP with a factor of $\Phi^{-k}$.
\end{proof}

\begin{corollary}[Symmetry in $x$]\label{cor:symm}
For real-valued operators, the TAP is symmetric in $x$ when considered over all $\mathbf{v}$,
that is, it is sufficient to consider ${x\in[0,\pi]}$.
\end{corollary}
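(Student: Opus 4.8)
The plan is to exploit the fact that, because $\Phi$ and $\Psi$ are real-valued, complex conjugation commutes with them, and use this to pair the angle $x$ with the angle $2\pi-x$. For a fixed $x$, recall (as noted just before \Cref{lem:complex}) that the F-TAP constant at that angle is
$\varphi_F(x) := \sup_{\mathbf{v}\neq\mathbf{0}} \|(\Psi-\Phi^k)\mathbf{v}\|\,/\,\|(I - e^{\mathrm{i}x}\Psi)\mathbf{v}\|$, so that $\varphi_F = \min_{x\in[0,2\pi]}\varphi_F(x)$, and similarly for $\varphi_{FCF}(x)$ with the extra factor $\Phi^{-k}$ inside both norms. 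The goal is to show $\varphi_F(x) = \varphi_F(2\pi-x)$ and $\varphi_{FCF}(x) = \varphi_{FCF}(2\pi-x)$, from which the claimed reduction to $x\in[0,\pi]$ is immediate.

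First I would record the conjugation identities. Since $\Psi$ has real entries, $\Psi\overline{\mathbf{v}} = \overline{\Psi\mathbf{v}}$, hence
$(I - e^{\mathrm{i}x}\Psi)\overline{\mathbf{v}} = \overline{\mathbf{v}} - e^{\mathrm{i}x}\overline{\Psi\mathbf{v}} = \overline{(I - e^{-\mathrm{i}x}\Psi)\mathbf{v}}$,
and taking $\ell^2$-norms (which are conjugation-invariant) gives $\|(I - e^{\mathrm{i}x}\Psi)\overline{\mathbf{v}}\| = \|(I - e^{-\mathrm{i}x}\Psi)\mathbf{v}\|$. Likewise, $\Psi-\Phi^k$ is real, so $\|(\Psi-\Phi^k)\overline{\mathbf{v}}\| = \|(\Psi-\Phi^k)\mathbf{v}\|$; the same reasoning applies to $\Phi^{-k}$ for the FCF case. (Equivalently, one can read this off the expansion in the proof of \Cref{lem:complex}: $\|(I - e^{\mathrm{i}x}\Psi)\mathbf{v}\|^2 = \|\mathbf{v}\|^2 + \|\Psi\mathbf{v}\|^2 - 2\cos(x)\mathcal{R} - 2\sin(x)\mathcal{I}$, and replacing $\mathbf{v}=\mathbf{v}_r+\mathrm{i}\mathbf{v}_i$ by $\overline{\mathbf{v}}=\mathbf{v}_r-\mathrm{i}\mathbf{v}_i$ leaves $\mathcal{R}$ and the two norm terms unchanged while flipping the sign of $\mathcal{I}$, i.e.\ it sends $x\mapsto -x$.)

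Combining these, the ratio defining $\varphi_F$ evaluated at angle $x$ with the vector $\overline{\mathbf{v}}$ equals the ratio at angle $-x\equiv 2\pi-x$ with the vector $\mathbf{v}$. Since $\mathbf{v}\mapsto\overline{\mathbf{v}}$ is a bijection of $\mathbb{C}^n\setminus\{\mathbf{0}\}$, taking the supremum over all $\mathbf{v}$ yields $\varphi_F(x)=\varphi_F(2\pi-x)$, and identically $\varphi_{FCF}(x)=\varphi_{FCF}(2\pi-x)$; both TAP constants are therefore symmetric about $x=\pi$, so the minimizations over $[0,2\pi]$ in \eqref{eq:tap_f}--\eqref{eq:tap_fcf} reduce to $[0,\pi]$.

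I do not expect a genuine obstacle here; the argument is a short symmetry observation. The one point to state carefully is exactly the qualifier ``considered over all $\mathbf{v}$'': for a single fixed $\mathbf{v}$ the term $-2\sin(x)\mathcal{I}$ destroys the symmetry unless $\mathcal{I}=0$, so the reflection $x\mapsto 2\pi-x$ must be accompanied by the conjugation $\mathbf{v}\mapsto\overline{\mathbf{v}}$, and the statement is genuinely about the supremum over $\mathbf{v}$ (equivalently, about the largest singular value of $(\Psi-\Phi^k)(I-e^{\mathrm{i}x}\Psi)^{-1}$).
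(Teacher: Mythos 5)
Your proposal is correct and uses essentially the same mechanism as the paper's proof: pairing $\mathbf{v}$ with its conjugate $\hat{\mathbf{v}}=\overline{\mathbf{v}}$, which leaves $\mathcal{R}$ and all the relevant norms unchanged while flipping the sign of $\mathcal{I}$, hence sending $x\mapsto -x$. Your framing in terms of the per-angle constant $\varphi_F(x)$ and the bijection $\mathbf{v}\mapsto\overline{\mathbf{v}}$ makes the ``over all $\mathbf{v}$'' quantifier slightly more explicit, but the argument is the same.
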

\begin{proof}
From the above proof, suppose that $\mathbf{v} := \mathbf{v}_r + \mathrm{i}\mathbf{v}_i$
is minimized by $x_0 = \arctan(\mathcal{I}/\mathcal{R})$. Then, swap the sign on
$\mathbf{v}_i \mapsto \hat{\mathbf{v}}:= \mathbf{v}_r - \mathrm{i}\mathbf{v}_i$,
which yields $\hat{\mathcal{I}} = -\mathcal{I}$ and $\hat{\mathcal{R}} = \mathcal{R}$,
and $\hat{\mathbf{v}}$ is minimized at $\hat{x}_0 = \arctan(\hat{\mathcal{I}}/\hat{\mathcal{R}})
= \arctan(-\mathcal{I}/\mathcal{R}) = -\arctan(\mathcal{I}/\mathcal{R}) = -x_0$. 
Further note the equalities
$\min_{x\in[0,2\pi]} \|(I - e^{\mathrm{i}x}\Psi)\mathbf{v}\| = 
\min_{x\in[0,2\pi]} \|(I - e^{\mathrm{i}x}\Psi)\hat{\mathbf{v}}\|$
and $\|(\Psi - \Phi^k)\mathbf{v}\| = \|(\Psi - \Phi^k)\hat{\mathbf{v}}\|$ and, thus,
$\mathbf{v}$ and $\hat{\mathbf{v}}$ satisfy the F-TAP with the same constant, and
$x$-values $x_0$ and $-x_0$. Similar derivations hold for the FCF-TAP.
\end{proof}

\begin{corollary}[A sufficient condition for the TAP]
For real-valued operators, sufficient conditions to satisfy the F-TAP and FCF-TAP,
respectively, are that for all vectors $\mathbf{v}$,
\begin{align}\label{eq:tap_approx_f}
\|(\Psi - \Phi^k)\mathbf{v}\| & \leq \varphi_{F} ( \|\mathbf{v}\| - \|\Psi\mathbf{v}\| ), \\
\|(\Psi - \Phi^k)\Phi^k\mathbf{v}\| & \leq \varphi_{FCF} ( \|\mathbf{v}\| -
	 \|\Phi^{-k}\Psi\Phi^k\mathbf{v}\| ).\label{eq:tap_approx_fcf}
\end{align}
\end{corollary}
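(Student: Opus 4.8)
The plan is to derive both sufficient conditions as an immediate consequence of \Cref{lem:complex} combined with the Cauchy--Schwarz inequality. First I would take the closed form \eqref{eq:TAPmin} for the F-TAP minimization and bound the cross term by Cauchy--Schwarz, $\left|\langle\Psi\mathbf{v},\mathbf{v}\rangle\right| \le \|\Psi\mathbf{v}\|\,\|\mathbf{v}\|$, so that \Cref{lem:complex} gives
\[
\min_{x\in[0,2\pi]}\|(I-e^{\mathrm{i}x}\Psi)\mathbf{v}\|^2 \;=\; \|\mathbf{v}\|^2+\|\Psi\mathbf{v}\|^2-2\left|\langle\Psi\mathbf{v},\mathbf{v}\rangle\right| \;\ge\; \left(\|\mathbf{v}\|-\|\Psi\mathbf{v}\|\right)^2 .
\]
Taking square roots yields $\min_{x}\|(I-e^{\mathrm{i}x}\Psi)\mathbf{v}\| \ge \bigl|\,\|\mathbf{v}\|-\|\Psi\mathbf{v}\|\,\bigr|$. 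Now suppose \eqref{eq:tap_approx_f} holds. Since its left-hand side is a norm and hence nonnegative, \eqref{eq:tap_approx_f} forces $\|\mathbf{v}\|\ge\|\Psi\mathbf{v}\|$ whenever $(\Psi-\Phi^k)\mathbf{v}\ne\mathbf{0}$; on such $\mathbf{v}$ we may drop the absolute value and chain
$\|(\Psi-\Phi^k)\mathbf{v}\| \le \varphi_F(\|\mathbf{v}\|-\|\Psi\mathbf{v}\|) = \varphi_F\bigl|\,\|\mathbf{v}\|-\|\Psi\mathbf{v}\|\,\bigr| \le \varphi_F\min_x\|(I-e^{\mathrm{i}x}\Psi)\mathbf{v}\|$, which is \eqref{eq:tap_f}; when $(\Psi-\Phi^k)\mathbf{v}=\mathbf{0}$ the F-TAP is trivial.

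For the FCF case I would run the same argument on the second identity of \Cref{lem:complex}, but evaluated at $\Phi^k\mathbf{v}$ in place of $\mathbf{v}$. Writing $\mathbf{w}=\Phi^k\mathbf{v}$, so that $\Phi^{-k}\mathbf{w}=\mathbf{v}$, \Cref{lem:complex} and Cauchy--Schwarz give
\[
\min_{x\in[0,2\pi]}\|\Phi^{-k}(I-e^{\mathrm{i}x}\Psi)\mathbf{w}\|^2 \;=\; \|\mathbf{v}\|^2+\|\Phi^{-k}\Psi\Phi^k\mathbf{v}\|^2-2\left|\langle\Phi^{-k}\Psi\Phi^k\mathbf{v},\mathbf{v}\rangle\right| \;\ge\; \bigl(\|\mathbf{v}\|-\|\Phi^{-k}\Psi\Phi^k\mathbf{v}\|\bigr)^2 .
\]
Combining this with \eqref{eq:tap_approx_fcf}, together with the same positivity/triviality bookkeeping as in the F-case, shows $\|(\Psi-\Phi^k)\mathbf{w}\| \le \varphi_{FCF}\min_x\|\Phi^{-k}(I-e^{\mathrm{i}x}\Psi)\mathbf{w}\|$, i.e.\ the FCF-TAP \eqref{eq:tap_fcf} at the vector $\mathbf{w}$. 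Finally, since the FCF-TAP itself involves $\Phi^{-k}$, the operator $\Phi^k$ is invertible, so $\mathbf{v}\mapsto\Phi^k\mathbf{v}$ is a bijection; letting $\mathbf{v}$ range over all vectors makes $\mathbf{w}$ range over all vectors, giving \eqref{eq:tap_approx_fcf} $\Rightarrow$ FCF-TAP for every vector.

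The computation is short, so there is no genuine obstacle; the only points that require care are bookkeeping. The main one is the change of variables $\mathbf{v}\mapsto\Phi^k\mathbf{v}$ in the FCF statement: one must check that this substitution is precisely what converts \eqref{eq:tap_approx_fcf} into the form \eqref{eq:tap_fcf}, and that invertibility of $\Phi^k$ is what allows the conclusion to be asserted for all vectors rather than only for those in the range of $\Phi^k$. A secondary point is the sign of the right-hand sides in \eqref{eq:tap_approx_f}--\eqref{eq:tap_approx_fcf}: they bound the (nonnegative) norm on the left only where they are themselves nonnegative, so the implication to the TAP must be phrased to dispatch the degenerate case $(\Psi-\Phi^k)\mathbf{v}=\mathbf{0}$ separately, as above.
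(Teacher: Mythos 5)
Your proposal is correct and takes essentially the same route as the paper's proof: apply \Cref{lem:complex} together with Cauchy--Schwarz to get $\min_{x}\|(I-e^{\mathrm{i}x}\Psi)\mathbf{v}\|^2\ge(\|\mathbf{v}\|-\|\Psi\mathbf{v}\|)^2$ and chain the inequalities, with the FCF case obtained by the substitution $\mathbf{v}\mapsto\Phi^k\mathbf{v}$ that the paper leaves implicit under ``similar derivations.'' Your extra bookkeeping is sound but not strictly needed, since $\|\mathbf{v}\|-\|\Psi\mathbf{v}\|\le\bigl|\,\|\mathbf{v}\|-\|\Psi\mathbf{v}\|\,\bigr|$ already closes the chain without a case split.
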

\begin{proof}
Note 
\begin{align*}
\min_{x\in[0,2\pi]} \|(I - e^{\mathrm{i}x}\Psi)\mathbf{v}\|^2 
& \geq \|\mathbf{v}\|^2 + \|\Psi\mathbf{v}\|^2 -2\|\Psi\mathbf{v}\|\|\mathbf{v}\| \\
& = \left(\|\mathbf{v}\| - \|\Psi\mathbf{v}\|\right)^2.
\end{align*}
Then, $\|(\Psi - \Phi^k)\mathbf{v}\| \leq \varphi_{F} ( \|\Psi\mathbf{v}\| - \|\mathbf{v}\| )
\leq \varphi_F \min_{x\in[0,2\pi]} \|(I - e^{\mathrm{i}x}\Psi)\mathbf{v}\|$. Similar
derivations hold for $\min_{x\in[0,2\pi]} \|\Phi^{-k}(I - e^{\mathrm{i}x}\Psi)\mathbf{v}\|$.
\end{proof}

For all practical purposes, a computable approximation to the TAP is sufficient, because the underlying purpose
is to understand convergence of Parareal and MGRIT, and help pick or develop effective coarse-grid propagators.
To that end, one can approximate the TAP by only considering it for specific $x$. For example, one
could only consider real-valued $e^{\mathrm{i}x}$, with $x \in\{0,\pi\}$ (or, equivalently, only real-valued $\mathbf{v}$).
Let $\Psi = \Psi_s + \Psi_k$, where $\Psi_s := (\Psi + \Psi^T)/2$ and $\Psi_k := (\Psi - \Psi^T)/2$ are the symmetric
and skew-symmetric parts of $\Psi$. Then, from \Cref{lem:complex} and Remark 4 in \cite{southworth19}, the TAP
restricted to $x\in\{0,\pi\}$ takes the simplified form
\begin{align}
\min_{x\in\{0,\pi\}} \|(I - e^{\mathrm{i}x}\Psi)\mathbf{v}\|^2 & =
  \|\mathbf{v}\|^2 + \|\Psi\mathbf{v}\|^2 -2\left|\langle\Psi_s\mathbf{v},\mathbf{v} \rangle\right|\nonumber \\
& =  \begin{cases} \|(I + \Psi)\mathbf{v}\|^2  &
  \text{if } \langle \Psi\mathbf{v},\mathbf{v}\rangle \leq 0 \\
  \|(I - \Psi)\mathbf{v}\|^2 & \text{if } \langle \Psi\mathbf{v},\mathbf{v}\rangle > 0 \end{cases}. \label{eq:tap_symm}
\end{align}
Here, we have approximated the numerical range $|\langle\Psi\mathbf{v},\mathbf{v}\rangle|$ in \eqref{eq:TAPmin}
with the numerical range restricted to the real axis (that is, the numerical range of the symmetric component of $\Psi$).
If $\Psi$ is symmetric, $\Psi_s = \Psi$, $\Psi$ is unitarily diagonalizable, and the eigenvalue-based convergence bounds
of \cite{southworth19} immediately pop out from \eqref{eq:tap_symm}. Because the numerical range is convex and
symmetric across the real axis, \eqref{eq:tap_symm} provides a reasonable approximation when $\Psi$ has a
significant symmetric component.

Now suppose $\Psi$ is skew symmetric. Then $\Psi_s = \mathbf{0}$, and the numerical range lies exactly on the
imaginary axis (corresponding with the eigenvalues of a skew symmetric operator). This suggests an alternative
approximation to \eqref{eq:TAPmin} by letting $x \in\{\pi/2,3\pi/2\}$, which yields $e^{\mathrm{i}x} =\pm \mathrm{i}$.
Similar to above, this yields a simplified version of the TAP,
\begin{align}\label{eq:tap_skew}
\min_{x\in\{\pi/2,3\pi/2\}} \|(I - e^{\mathrm{i}x}\Psi)\mathbf{v}\|^2 & = \|\mathbf{v}\|^2 + \|\Psi\mathbf{v}\|^2 -2\left|\langle\Psi_k\mathbf{v},\mathbf{v} \rangle\right| \\
  & =  \begin{cases} \|(I + \mathrm{i}\Psi)\mathbf{v}\|^2  &
  \text{if } \langle \Psi\mathbf{v},\mathbf{v}\rangle \leq 0 \nonumber \\
    \|(I -  \mathrm{i}\Psi)\mathbf{v}\|^2 & \text{if } \langle \Psi\mathbf{v},\mathbf{v}\rangle > 0 \end{cases}.
\end{align}
Recall skew symmetric operators are normal and unitarily diagonalizable. Pulling out the eigenvectors in
\eqref{eq:tap_skew} and doing a maximum over eigenvalues again yields exactly the two-grid eigenvalue bounds
derived in \cite{southworth19}. In particular, the $e^{\mathrm{i}x}$ is a rotation of the purely imaginary eigenvalues
to the real axis, and corresponds to the denominator $1 - |\mu_i|$ of two-grid eigenvalue convergence bounds
\cite{southworth19}.

\subsection{Two-level results, and why multilevel is harder}\label{sec:theory:single}

\Cref{th:conv} covers the case of two-level convergence in a fairly general
(linear) setting. In practice, however, multilevel methods are often superior to
two-level methods, so a natural question is what these results mean in the
multilevel setting. Two-grid convergence usually provides a lower bound on
possible multilevel convergence factors. For MGRIT, it
is known that F-cycles can be advantageous, or even necessary, for fast,
scalable (multilevel) convergence \cite{Falgout:2014}, better approximating
a two-level method on each level of the hierarchy than a normal V-cycle.
However, because MGRIT uses non-Galerkin coarse grid operators, the relationship
between two-level and multilevel convergence is complicated, and it is not
obvious that two-grid convergence does indeed provide a lower bound on multilevel
convergence.

\Cref{th:conv} and results in \cite{Dobrev:2017,southworth19}, can be
interpreted in two ways. The first, and the interpretation used here, is that
the derived tight bounds on worst-case convergence factor hold for all but the
first iteration. In \cite{Dobrev:2017}, a different perspective is taken, where
bounds in \Cref{th:conv} hold for propagation of C-point error on all iterations.\footnote{In
\cite{Dobrev:2017} it is assumed that $\Phi$ and $\Psi$ are unitarily
diagonalizable, and upper bounds on convergence are derived. These bounds were
generalized in \cite{southworth19}, and shown to be tight in the unitarily
diagonalizable setting.} In the two-level setting,
either case provides necessary and sufficient conditions for convergence,
assuming that the other iteration is bounded (which it is \cite{southworth19}).

In the multilevel setting we are interested in convergence over
\textit{all} points for a \textit{single} iteration.
Consider a three-level MGRIT V-cycle, with levels $0, 1$, and
$2$. On level 1, one iteration of two-level MGRIT is applied as an approximate
residual correction for the problem on level 0. Suppose \Cref{th:conv} ensures
convergence for one iteration, that is, $\varphi<1$. Because we
are only performing one iteration, we must take the perspective that
\Cref{th:conv} ensures a decrease in C-point error, but a possible increase in
F-point error on level 1. However, if the \textit{total} error on level 1 has
increased, coarse-grid correction on level 0 interpolates a \textit{worse}
approximation to the desired (exact) residual correction than interpolating no
correction at all! If divergent behavior occurs somewhere in the middle of the
multigrid hierarchy, it is likely that the larger multigrid iteration will also
diverge. Given that multilevel convergence is usually worse than two-level in
practice, this motivates a stronger two-grid result that can ensure two-grid
convergence for all points on all iterations.

The following theorem introduces stronger variations in the TAP that provide
necessary conditions for a two-level method with F- and FCF-relaxation to
converge every iteration on C-points and F-points. \Cref{cor:new2grid}
strengthens this result in the case of simultaneous diagonalization of $\Phi$
and $\Psi$, deriving necessary and sufficient conditions for convergence, with
tight bounds in norm.

\begin{theorem}\label{th:new2grid}
    Let $\mathcal{E}_F$ and $\mathcal{E}_{FCF}$ denote error propagation of two-level MGRIT with F-relaxation
    and FCF-relaxation, respectively. Define
    $\mathcal{W}_F := \sqrt{\sum_{\ell=0}^{k-1} \Phi^\ell (\Phi^\ell)^*}$,
    $\mathcal{W}_{FCF} := \sqrt{\sum_{\ell=k}^{2k-1} \Phi^\ell(\Phi^\ell)^*}$, and $\varphi_F^{A^*A}$ and
    $\varphi_{FCF}^{A^*A}$ as the minimum constants such that, for all $\mathbf{v}$,
        \begin{align*}
        \| (\Psi - \Phi^{k}) \mathbf{v} \|
        &\leq \widehat{\varphi}_F \left[ \min_{x \in [0, 2 \pi]} \left\| \mathcal{W}_F^{-1} (I - e^{ix}\Psi) \mathbf{v} \right\| + O (1/\sqrt{N_c}) \right], \\
        \| (\Psi - \Phi^{k}) \mathbf{v} \|
        &\leq \widehat{\varphi}_{FCF} \left[ \min_{x \in [0, 2 \pi]} \left\|\mathcal{W}_{FCF}^{-1} (I - e^{ix} \Psi) \mathbf{v} \right\| + O(1/\sqrt{N_c}) \right].
    \end{align*}
        Then, $ \| \mathcal{R}_F \| \geq \widehat{\varphi}_{F}$ and $\| \mathcal{R}_{FCF} \|\geq
    \widehat{\varphi}_{FCF}$.
\end{theorem}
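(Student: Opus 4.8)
The plan is to compute $\|\mathcal{R}_F\|$ and $\|\mathcal{R}_{FCF}\|$ exactly, reducing them — as in \Cref{sec:simple:norm} — to one over the minimum singular value of a block-bidiagonal (near-)Toeplitz operator, the one new ingredient being that the interpolation-side factor $L := \begin{bmatrix} -A_{cf}A_{ff}^{-1} & I \end{bmatrix}$ appearing in \eqref{eq:err_res} carries a nontrivial block-diagonal Gram matrix that supplies the weights $\mathcal{W}_F,\mathcal{W}_{FCF}$. First, in \eqref{eq:err_res} the left factor $\begin{bmatrix}\mathbf{0}\\ \cdot\end{bmatrix}$ is an isometric embedding of the C-point space and drops from the norm, leaving $\|\mathcal{R}_F\| = \|\widetilde{\mathcal{R}}_F L\|$ with $\widetilde{\mathcal{R}}_F = I-A_\Delta B_\Delta^{-1} = I_L D B_\Delta^{-1}$; since the $\ell^2$-norm of $BX$ depends on $X$ only through $XX^*$, this equals $\|\widetilde{\mathcal{R}}_F (LL^*)^{1/2}\|$. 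Using the closed form \eqref{eq:geninv} for $A_{ff}^{-1}$ in the time-independent case, the row of $L$ at coarse point $jk$ is $\begin{bmatrix}\Phi^{k-1} & \cdots & \Phi & I\end{bmatrix}$ spread over the preceding F-interval together with the C-point itself; distinct C-points use disjoint F-intervals, so $LL^*$ is block diagonal with blocks $\sum_{\ell=0}^{k-1}\Phi^\ell(\Phi^\ell)^* = \mathcal{W}_F^2$ (the single corner block from the initial C-point, which has no preceding F-interval, equals $I$; this $O(1)$-among-$N_c$-blocks discrepancy is absorbed into the $O(1/\sqrt{N_c})$ terms exactly as corner perturbations are handled in the asymptotics behind \Cref{th:conv}). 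Hence $\|\mathcal{R}_F\| = \|\widetilde{\mathcal{R}}_F (I\otimes\mathcal{W}_F)\|$ up to this correction, where $I\otimes\mathcal{W}_F$ denotes the block-diagonal matrix with $\mathcal{W}_F$ on every block.

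Now the argument of \Cref{sec:simple:norm} applies verbatim to $\widetilde{\mathcal{R}}_F(I\otimes\mathcal{W}_F) = I_L D B_\Delta^{-1}(I\otimes\mathcal{W}_F)$: using $I_L^T I_L = I_z$, lower-triangularity of $DB_\Delta^{-1}$, and that a block-diagonal weight commutes with $I_z$, the last block row and column drop out, giving $\|\mathcal{R}_F\| = \|\widetilde{D}\widetilde{B}_\Delta^{-1}(I\otimes\mathcal{W}_F)\| = 1/\sigma_{\min}\big((I\otimes\mathcal{W}_F^{-1})\widetilde{B}_\Delta\widetilde{D}^{-1}\big)$. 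In the time-independent case $(I\otimes\mathcal{W}_F^{-1})\widetilde{B}_\Delta\widetilde{D}^{-1}$ is block lower-bidiagonal Toeplitz with diagonal block $\mathcal{W}_F^{-1}(\Phi^k-\Psi)^{-1}$ and subdiagonal block $-\mathcal{W}_F^{-1}\Psi(\Phi^k-\Psi)^{-1}$, i.e.\ with matrix symbol $\widehat{T}(x) = \mathcal{W}_F^{-1}(I-e^{\mathrm{i}x}\Psi)(\Phi^k-\Psi)^{-1}$. Invoking the block-Toeplitz estimates of \cite{southworth19} (which also accommodate the lower-order corner perturbation), the minimum singular value of the finite truncation is $\leq \min_{x\in[0,2\pi]}\sigma_{\min}(\widehat{T}(x)) + O(1/\sqrt{N_c})$, so $\|\mathcal{R}_F\| \geq 1/\big[\min_x\sigma_{\min}(\widehat{T}(x)) + O(1/\sqrt{N_c})\big]$. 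Finally, in $\sigma_{\min}(\widehat{T}(x)) = \min_{\mathbf{w}}\|\widehat{T}(x)\mathbf{w}\|/\|\mathbf{w}\|$ substitute $\mathbf{v} = (\Phi^k-\Psi)^{-1}\mathbf{w}$ (legitimate since $\Psi-\Phi^k$ is invertible): the reciprocal of $\min_x\sigma_{\min}(\widehat{T}(x))$ becomes precisely the smallest constant with $\|(\Psi-\Phi^k)\mathbf{v}\| \leq \widehat{\varphi}_F\min_x\|\mathcal{W}_F^{-1}(I-e^{\mathrm{i}x}\Psi)\mathbf{v}\|$ for all $\mathbf{v}$, with the $O(1/\sqrt{N_c})$ discrepancy matching the one inside the defining inequality of $\widehat{\varphi}_F$; this yields $\|\mathcal{R}_F\| \geq \widehat{\varphi}_F$.

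For FCF-relaxation the same steps go through, with the single extra factor $(I-A_\Delta) = I_L(I\otimes\Phi^k)$ entering: the Gram matrix of $(I\otimes\Phi^k)L$ then has blocks $\Phi^k\mathcal{W}_F^2(\Phi^k)^* = \sum_{\ell=k}^{2k-1}\Phi^\ell(\Phi^\ell)^* = \mathcal{W}_{FCF}^2$, so $\mathcal{W}_{FCF}$ absorbs \emph{both} the interpolation weight and the extra $\Phi^k$ from the CF part of the sweep; the additional $I_L$ produces the $I_L^2$ structure and hence the two-sided scalings flagged after \Cref{sec:simple:norm}, but collapses as there (here removing the last two block rows/columns) to leave a block lower-bidiagonal Toeplitz operator with symbol $\mathcal{W}_{FCF}^{-1}(I-e^{\mathrm{i}x}\Psi)(\Phi^k-\Psi)^{-1}$; the same substitution gives $\|\mathcal{R}_{FCF}\| \geq \widehat{\varphi}_{FCF}$.

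I expect the main obstacle to be the first step: verifying that the non-Galerkin interpolation factor $L$ — and, for FCF, $(I-A_\Delta)L$ — has the clean block-diagonal Gram matrix $I\otimes\mathcal{W}_F^2$ (resp.\ $I\otimes\mathcal{W}_{FCF}^2$), including the careful bookkeeping of the corner blocks; this is the one place the argument genuinely departs from \Cref{sec:simple:norm}, and it rests on the disjointness of the F-point intervals feeding distinct C-points. A secondary subtlety, and the reason the conclusion is only the one-sided bound $\|\mathcal{R}_F\| \geq \widehat{\varphi}_F$ rather than the two-sided estimate of \Cref{th:conv}, is that the truncated-Toeplitz $\sigma_{\min}$ is controlled from above by the symbol infimum only up to $O(1/\sqrt{N_c})$ (via a windowed-Fourier-mode test vector); pinning it from below — hence obtaining sufficiency — needs the extra structure used in \Cref{cor:new2grid}.
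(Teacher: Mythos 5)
Your proposal is correct and follows essentially the same route as the paper's proof: both reduce $\|\mathcal{R}_F\|$ to $\|(I-A_\Delta B_\Delta^{-1})\,[-A_{cf}A_{ff}^{-1}\;\; I]\|$, identify the block-diagonal Gram matrix $\mathbf{W}\mathbf{W}^* = \sum_{\ell=0}^{k-1}\Phi^\ell(\Phi^\ell)^* = \mathcal{W}_F^2$ (and its $\Phi^k$-shifted analogue for FCF) as the source of the weights, and invoke the block-Toeplitz upper bound on the minimum singular value with symbol $\mathcal{W}_F^{-1}(I-e^{\mathrm{i}x}\Psi)(\Phi^k-\Psi)^{-1}$ to obtain the necessary condition. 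The only differences are cosmetic: you pass to $(LL^*)^{1/2}$ and the invertible principal submatrix of \Cref{sec:simple:norm} where the paper constructs an explicit Moore--Penrose pseudoinverse of the singular product, and you absorb the anomalous first C-point block into the asymptotic term where the paper sidesteps it by starting the coarsening with $k$ F-points.
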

\begin{proof}
The proof follows the theoretical framework developed in \cite{southworth19} and can be found in the appendix.
\end{proof}
\begin{corollary}\label{cor:new2grid}
    Assume that $\Phi$ and $\Psi$ are simultaneously diagonalizable with eigenvectors $U$
    and eigenvalues $\{\lambda_i\}_i$ and $\{\mu_i\}_i$, respectively. Denote error- and residual-propagation operators
    of two-level MGRIT as $\mathcal{E}$ and $\mathcal{R}$, respectively, with subscripts indicating relaxation scheme.
    Let $\widetilde{U}$ denote a block-diagonal matrix with diagonal blocks given by $U$. Then,
        \begin{align}\label{eq:tight_2grid}
    \begin{split}
        \| \mathcal{R}_F \|_{(\widetilde{U} \widetilde{U}^*)^{-1}}
        = \|\mathcal{E}_F\|_{(\widetilde{U} \widetilde{U}^*)^{-1}}
        &= \max_i \sqrt{\frac{1-|\lambda_i|^{2 k}}{1 - |\lambda_i|^2}} \frac{|\mu_i - \lambda_i^{k}|}{(1 - |\mu_i|) + O(1/N_c)}, \\
        \| \mathcal{R}_{FCF} \|_{(\widetilde{U} \widetilde{U}^*)^{-1}}
        = \|\mathcal{E}_{FCF}\|_{(\widetilde{U} \widetilde{U}^*)^{-1}}
        &= \max_i \sqrt{\frac{1-|\lambda_i|^{2 k}}{1 - |\lambda_i|^2}} \frac{|\lambda_i|^{k}|\mu_i - \lambda_i^{k}|}{(1 - |\mu_i|) + O(1/N_c)}.
    \end{split}
    \end{align}
\end{corollary}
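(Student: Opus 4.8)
The plan is to reduce the block-operator norm computations from Section \ref{sec:simple:norm} to a decoupled scalar problem via the simultaneous diagonalization, and then recognize the result of Theorem \ref{th:new2grid} (or the F-TAP/FCF-TAP machinery of \cite{southworth19}) in the diagonalized coordinates. First I would write $\Phi = U \Lambda U^{-1}$ and $\Psi = U M U^{-1}$ with $\Lambda = \mathrm{diag}(\lambda_i)$, $M = \mathrm{diag}(\mu_i)$, so that $\widetilde{U}$ block-diagonalizes all the space-time operators in \eqref{eq:err_res}: $A_\Delta$, $B_\Delta$, and the F-point coupling blocks all become block-Toeplitz in the scalar variables $\lambda_i^k$ and $\mu_i$. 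The weighted norm $\|\cdot\|_{(\widetilde U \widetilde U^*)^{-1}}$ is precisely the norm in which $\widetilde U$ acts as an isometry up to the diagonal scalings, i.e. $\|X\|_{(\widetilde U \widetilde U^*)^{-1}} = \|\widetilde U^{-1} X \widetilde U\|$, so the change of variables is exact and the problem splits into $n$ independent scalar space-time problems indexed by $i$.

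Next, for each scalar index $i$, I would invoke the closed-form expressions for $\|\widetilde{\mathcal{R}}_F\|$, $\|\widetilde{\mathcal{E}}_F\|$ (and their FCF analogues) from Section \ref{sec:simple:norm}: in the scalar case $\widetilde D$, $\widetilde B_\Delta$ etc. are scalar bidiagonal Toeplitz matrices, and the minimum singular value is governed by block-Toeplitz theory \cite{southworth19}, giving $1/(1-|\mu_i|)$ up to $O(1/N_c)$ in the denominator. The numerator factor $|\mu_i - \lambda_i^k|$ is immediate from $D$, and the extra factor $\sqrt{(1-|\lambda_i|^{2k})/(1-|\lambda_i|^2)}$ is exactly $\|\mathcal{W}_F\|$ evaluated at eigenvalue $\lambda_i$, since $\mathcal{W}_F^2 = \sum_{\ell=0}^{k-1}\Phi^\ell(\Phi^\ell)^*$ becomes $\sum_{\ell=0}^{k-1}|\lambda_i|^{2\ell} = (1-|\lambda_i|^{2k})/(1-|\lambda_i|^2)$ in the eigenbasis — this is the leading F-point coupling term $\begin{bmatrix}-A_{ff}^{-1}A_{fc}\\ I\end{bmatrix}$ from \eqref{eq:err_res}, whose norm in the diagonalized, $A^*A$-type weighting contributes multiplicatively because the coarse iteration and the F-point reconstruction decouple in the scalar setting. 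The FCF case carries the additional $|\lambda_i|^k$ from the extra $(I - A_\Delta)$ factor, which in scalar Toeplitz form contributes $\lambda_i^k$ per block. Taking $\max_i$ over the decoupled scalar norms yields \eqref{eq:tight_2grid}, and the equality $\|\mathcal{R}\| = \|\mathcal{E}\|$ in this weighted norm follows because error and residual propagation are similar (via $A_\Delta$) and the similarity is block-diagonal, hence isometric in the given weighting.

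The main obstacle I anticipate is bookkeeping the precise interaction between the F-point reconstruction block (the $\mathcal{W}_F$ factor) and the coarse-grid Toeplitz inverse: one must confirm that in the weighted norm these two pieces genuinely multiply rather than merely bound each other, i.e. that the supremizing vector for the coarse iteration simultaneously maximizes the F-point weight. In the simultaneously-diagonalizable case this works out because every operator is block-diagonal in $\widetilde U$ and, within each scalar block, the reconstruction weight $\sqrt{\sum_\ell |\lambda_i|^{2\ell}}$ is a fixed constant pulled out of the scalar Toeplitz norm; but making this rigorous requires carefully tracking which reduced submatrices ($\widetilde B_\Delta$, $\widehat B_\Delta$, $\overline B_\Delta$) appear and verifying the $O(1/N_c)$ terms enter only through the minimum singular value of the bidiagonal Toeplitz factor, exactly as in Theorem \ref{th:conv}. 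A secondary point to handle carefully is that $\mathcal{W}_F^{-1}$ appears inside the $\min_x$ in Theorem \ref{th:new2grid}, so in the scalar reduction one should check that $\min_x |\mathcal{W}_F^{-1}(1 - e^{ix}\mu_i)| = \mathcal{W}_F^{-1}(1-|\mu_i|)$ with $\mathcal{W}_F$ the scalar constant — which is just Lemma \ref{lem:complex} applied in each block — so that $\widehat\varphi_F = \max_i \mathcal{W}_F(\lambda_i)\,|\mu_i - \lambda_i^k|/(1-|\mu_i|)$ matches \eqref{eq:tight_2grid} and the inequality of Theorem \ref{th:new2grid} becomes an equality under diagonalizability.
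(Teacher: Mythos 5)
Your proposal is correct and follows essentially the same route as the paper: diagonalize, reduce the weighted norm to decoupled scalar Toeplitz problems, substitute $\lambda_i,\mu_i$ for $\Phi,\Psi$ in \Cref{th:new2grid}, and evaluate $\mathcal{W}_F,\mathcal{W}_{FCF}$ at the eigenvalues to produce the $\sqrt{(1-|\lambda_i|^{2k})/(1-|\lambda_i|^2)}$ and $|\lambda_i|^k$ factors. The one step you assert rather than justify is why the one-sided lower bound of \Cref{th:new2grid} becomes a two-sided equality with the sharper $O(1/N_c)$ correction; the paper attributes this to closed-form eigenvalues of a tridiagonal Toeplitz matrix with a perturbed corner entry and to the second-order zero of the scalar generating function $F(x)-\min_y F(y)$, which is precisely the scalar Toeplitz tightness you implicitly invoke from \cite{southworth19}.
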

\begin{proof}
The proof follows the theoretical framework developed in \cite{southworth19} and can be found in the appendix.
\end{proof}
For a detailed discussion on the simultaneously diagonalizable assumption, see \cite{southworth19}.

Note in \Cref{th:new2grid} and \Cref{cor:new2grid}, there is an additional
scaling compared with results obtained on two-grid convergence for all but the
first iteration (see \Cref{th:conv} and \cite{southworth19}), which makes the
convergence bounds larger in all cases. This factor may be relevant to why
multilevel convergence is more difficult than two-level.
\Cref{fig:neumann} demonstrates the impact of this additional scaling by
plotting the difference between worst-case two-grid convergence on all points on
all iterations (\Cref{cor:new2grid}) 
vs. error on all points on all but one iteration (\Cref{th:conv}). Plots are a
function of $\delta t$ times the spatial eigenvalues in the complex plane.
Note, the color map is strictly positive because worst-case error propagation on
the first iteration is strictly greater than that on further iterations.

\begin{figure}[!th]
    \centering
    \begin{center}
        \begin{subfigure}[c]{0.25\textwidth}
            \includegraphics[height=\textwidth]{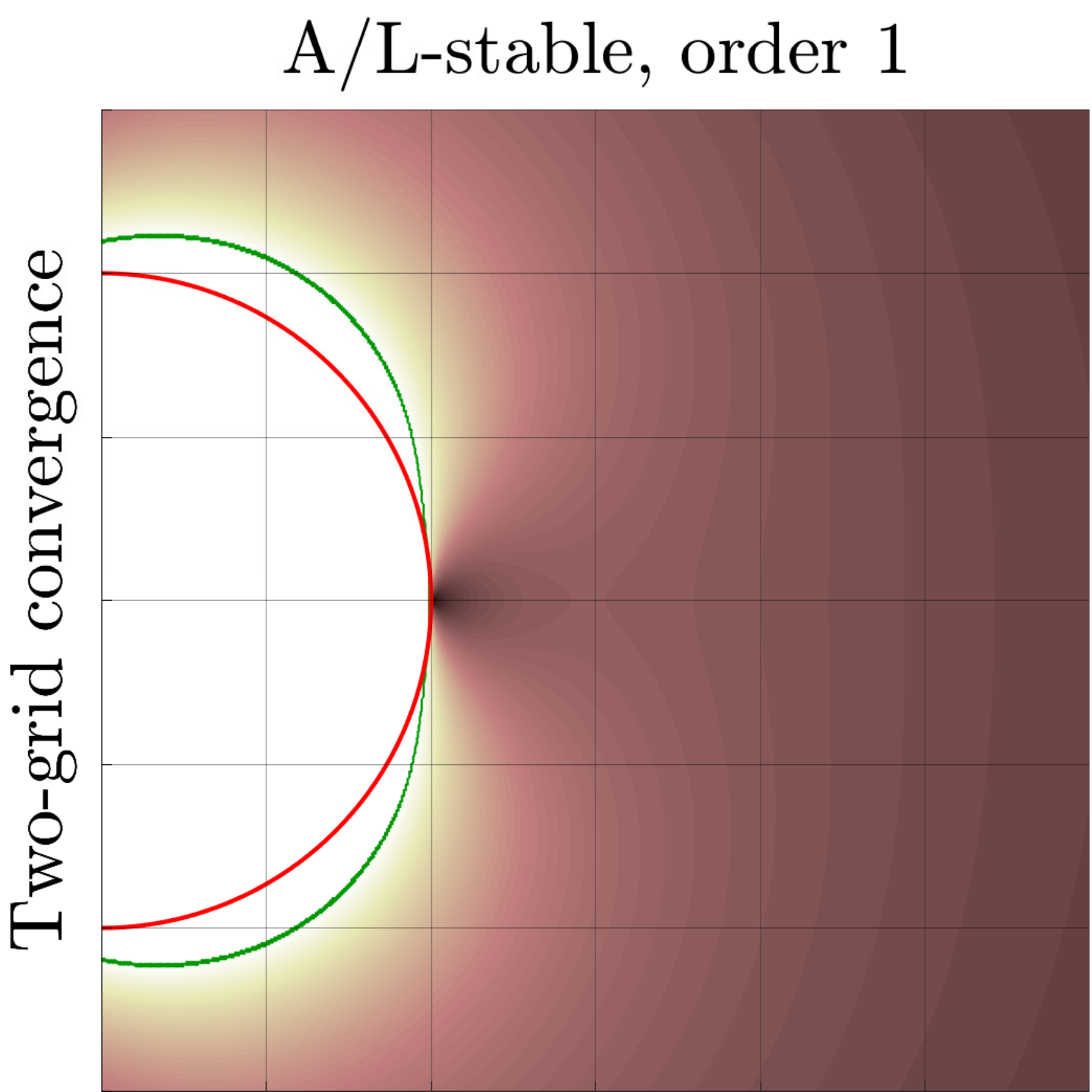}
        \end{subfigure}\hspace{1ex}
        \begin{subfigure}[c]{0.25\textwidth}
            \includegraphics[height=\textwidth]{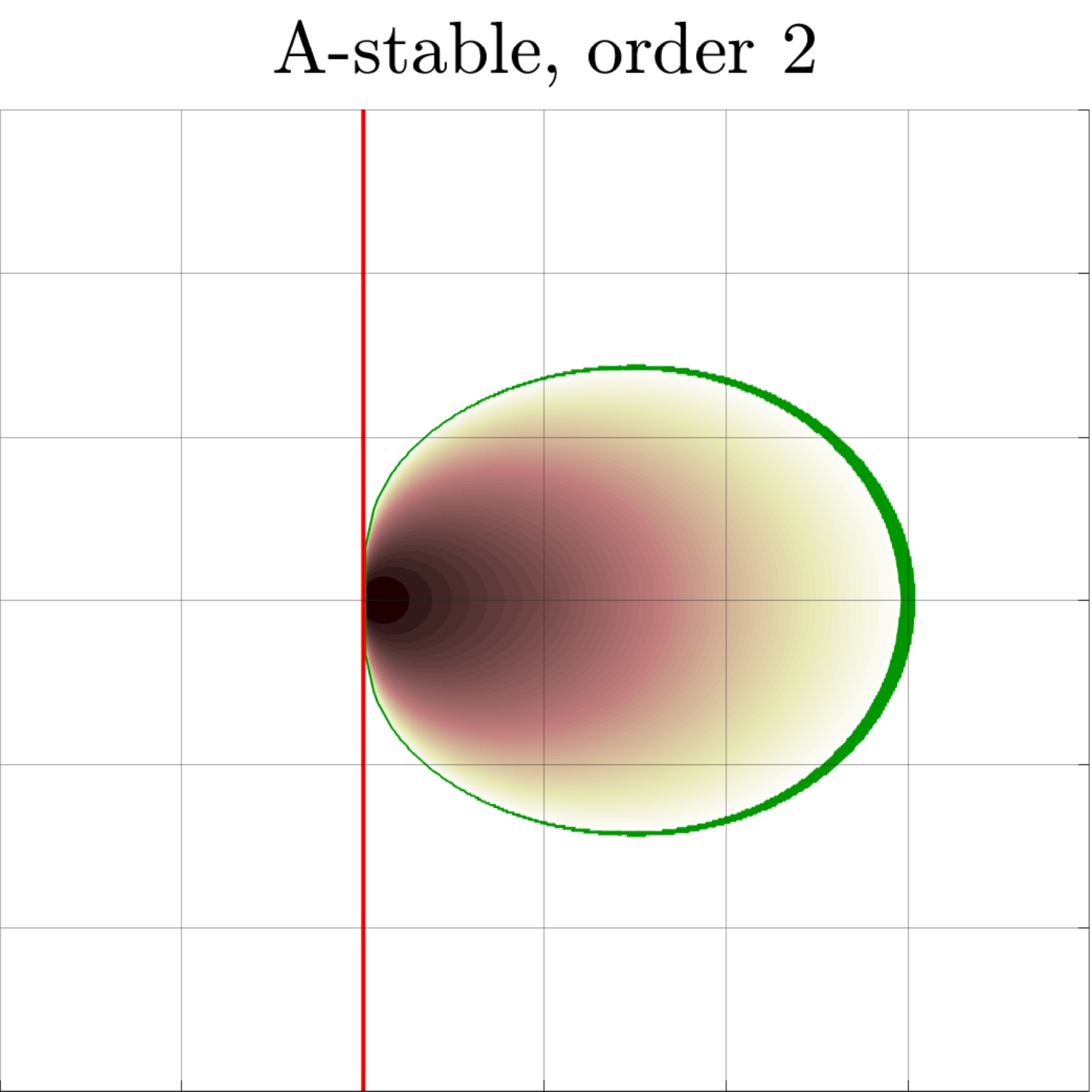}
        \end{subfigure}\hspace{1ex}
        \begin{subfigure}[c]{0.25\textwidth}
            \includegraphics[height=\textwidth]{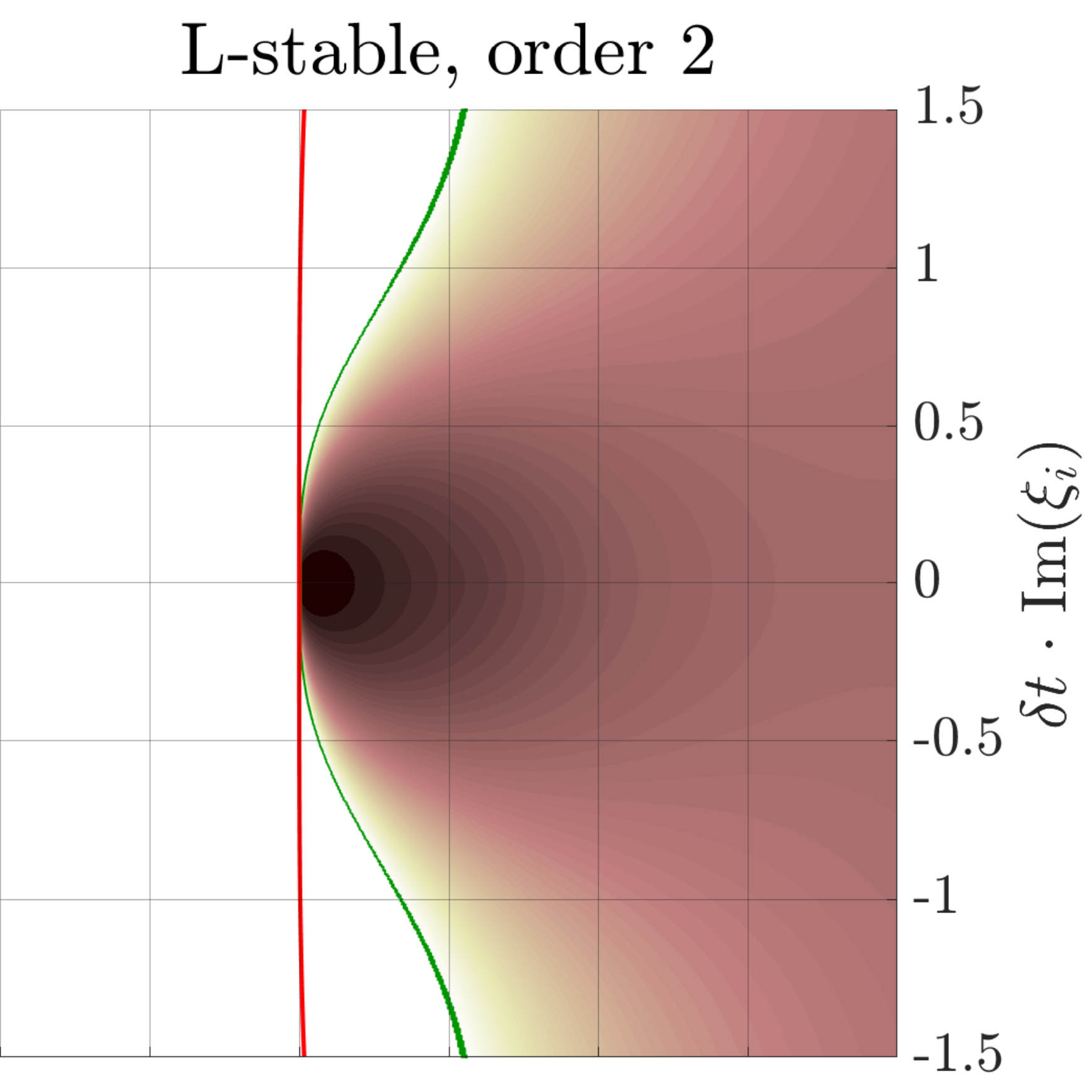}
        \end{subfigure}
    \end{center}
    \begin{center}
        \begin{subfigure}[c]{0.25\textwidth}
            \includegraphics[height=\textwidth]{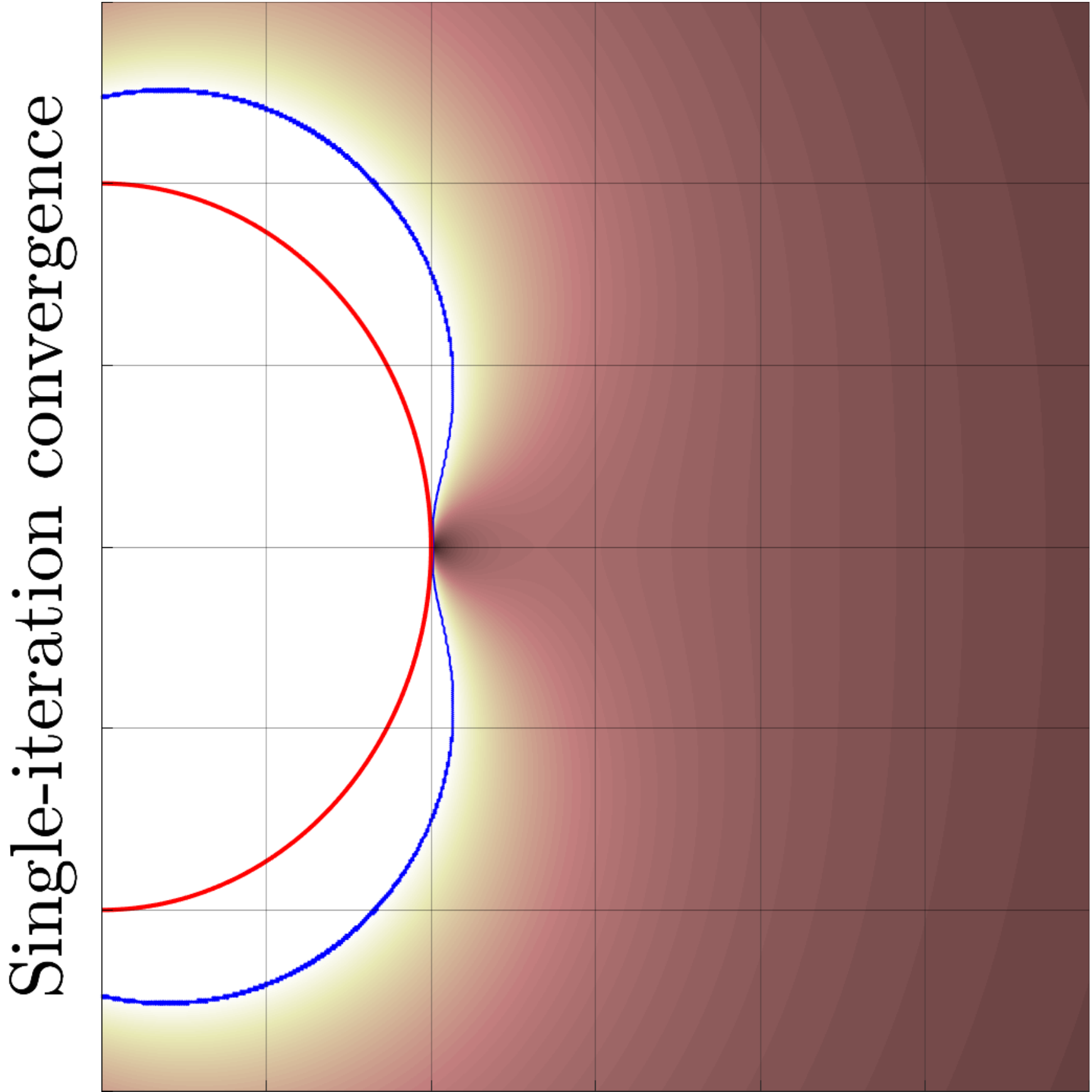}
        \end{subfigure}\hspace{1ex}
        \begin{subfigure}[c]{0.25\textwidth}
            \includegraphics[height=\textwidth]{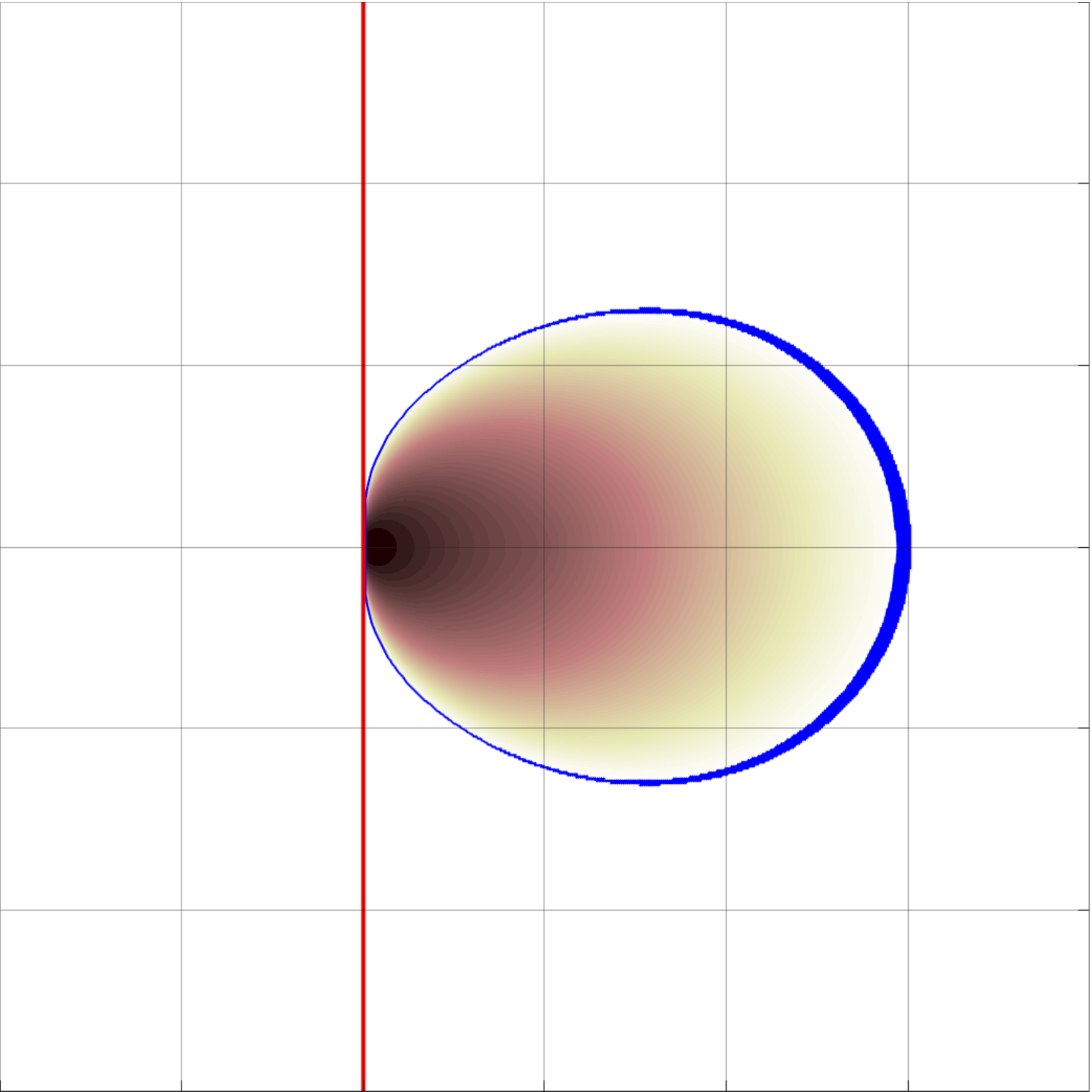}
        \end{subfigure}\hspace{1ex}
        \begin{subfigure}[c]{0.25\textwidth}
            \includegraphics[height=\textwidth]{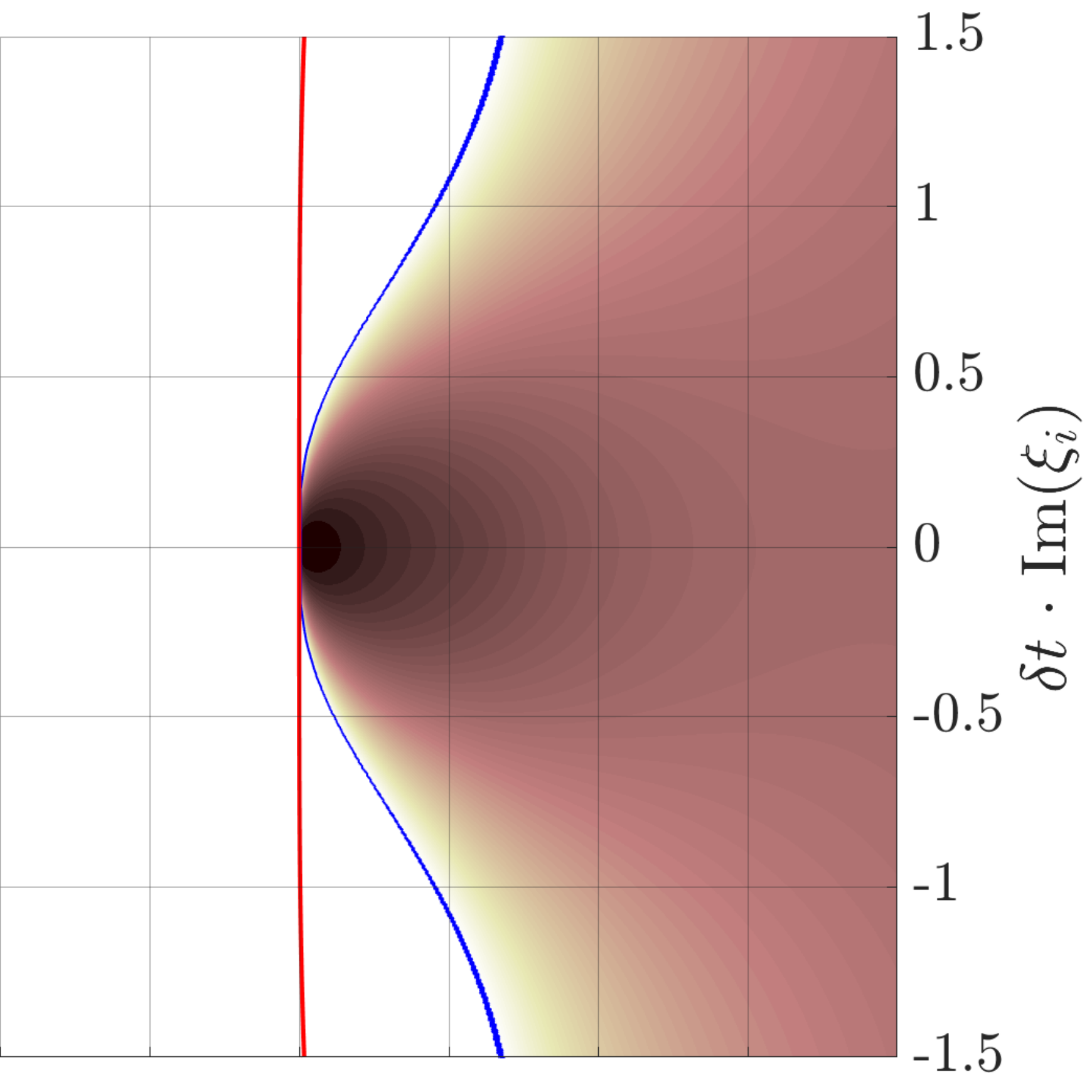}
        \end{subfigure}
    \end{center}
    \begin{center}
        \begin{subfigure}[b]{0.2575\textwidth}
            \includegraphics[width=\textwidth]{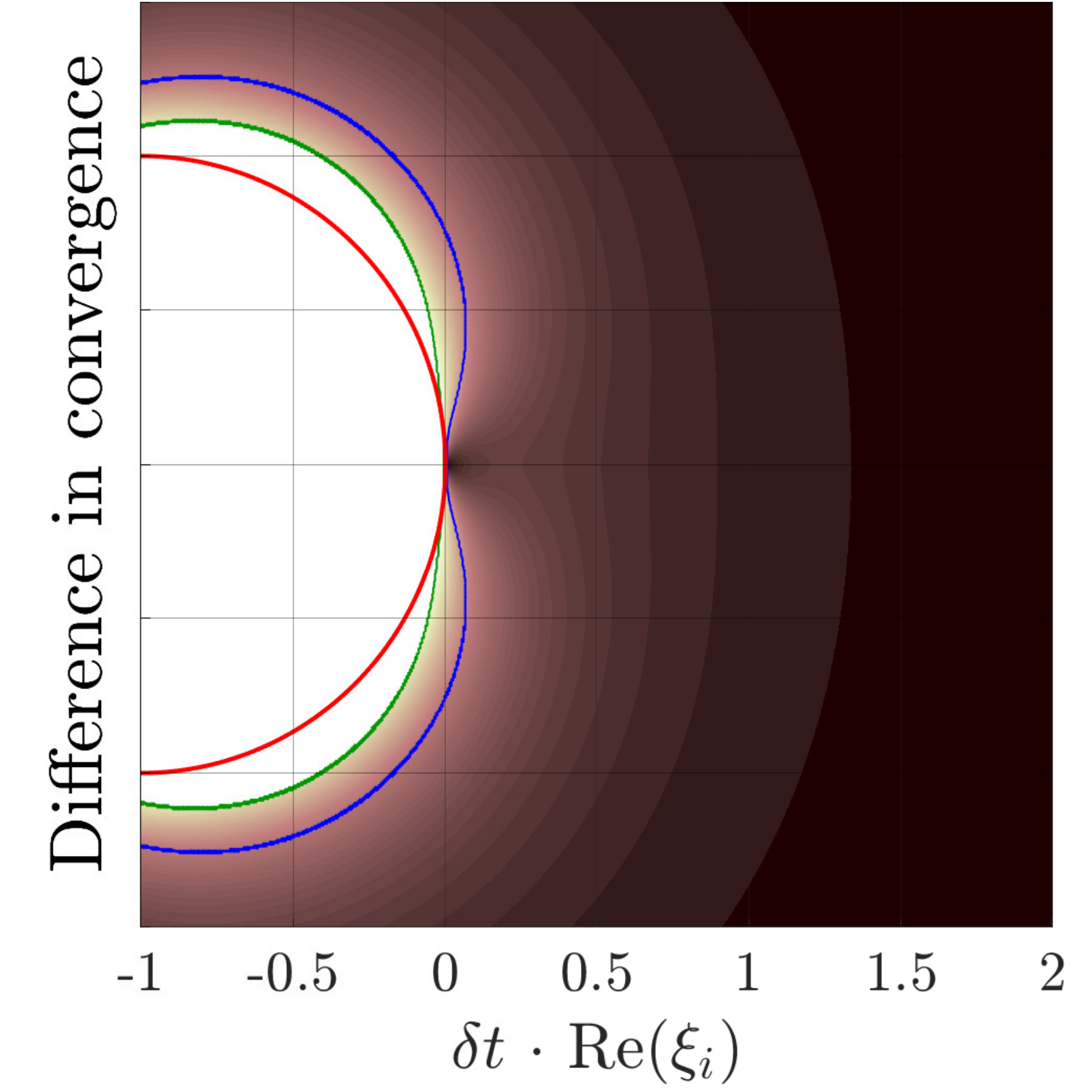}
        \end{subfigure}\hspace{1ex}
        \begin{subfigure}[b]{0.2575\textwidth}
            \includegraphics[width=\textwidth]{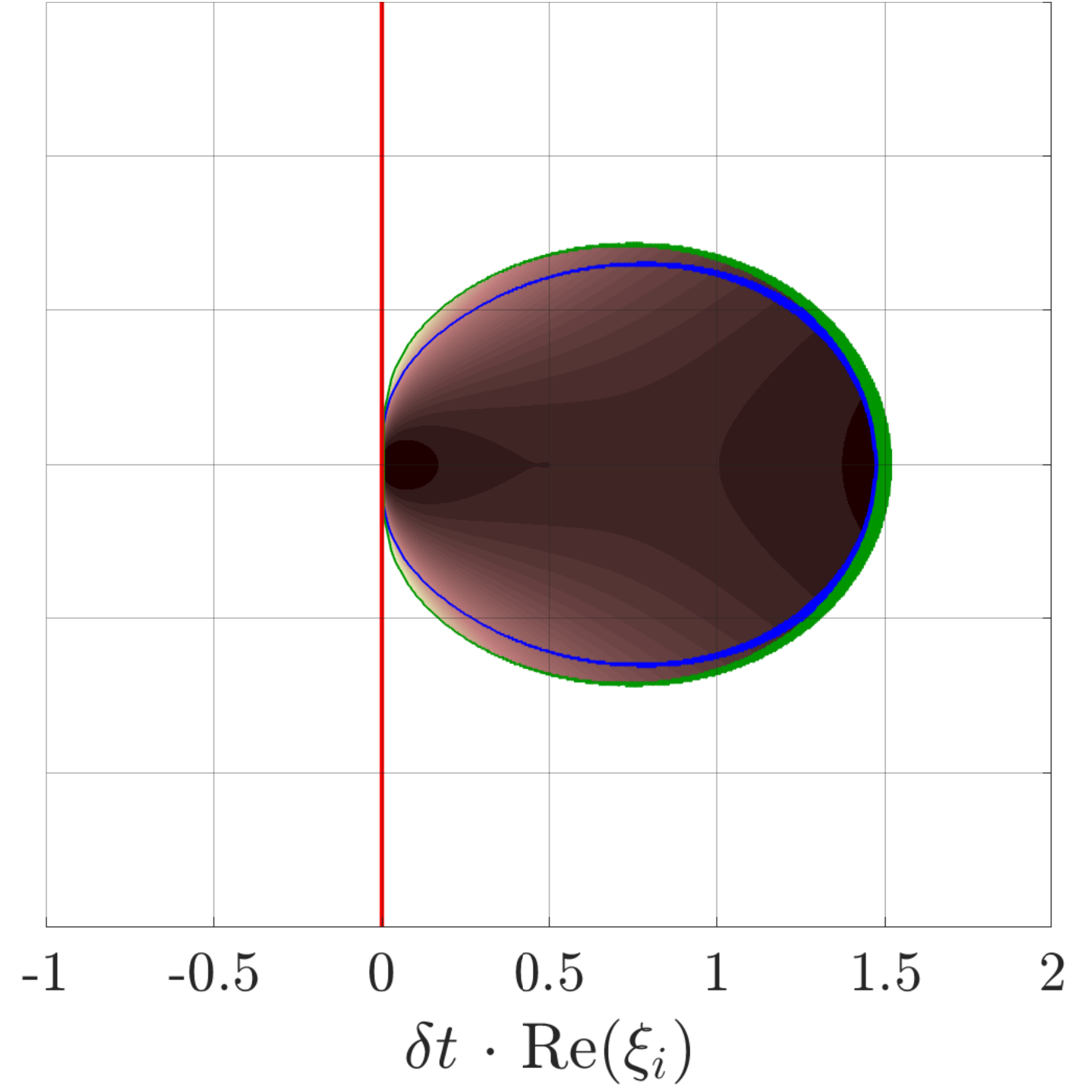}
        \end{subfigure}\hspace{1ex}
        \begin{subfigure}[b]{0.2575\textwidth}
            \includegraphics[width=\textwidth]{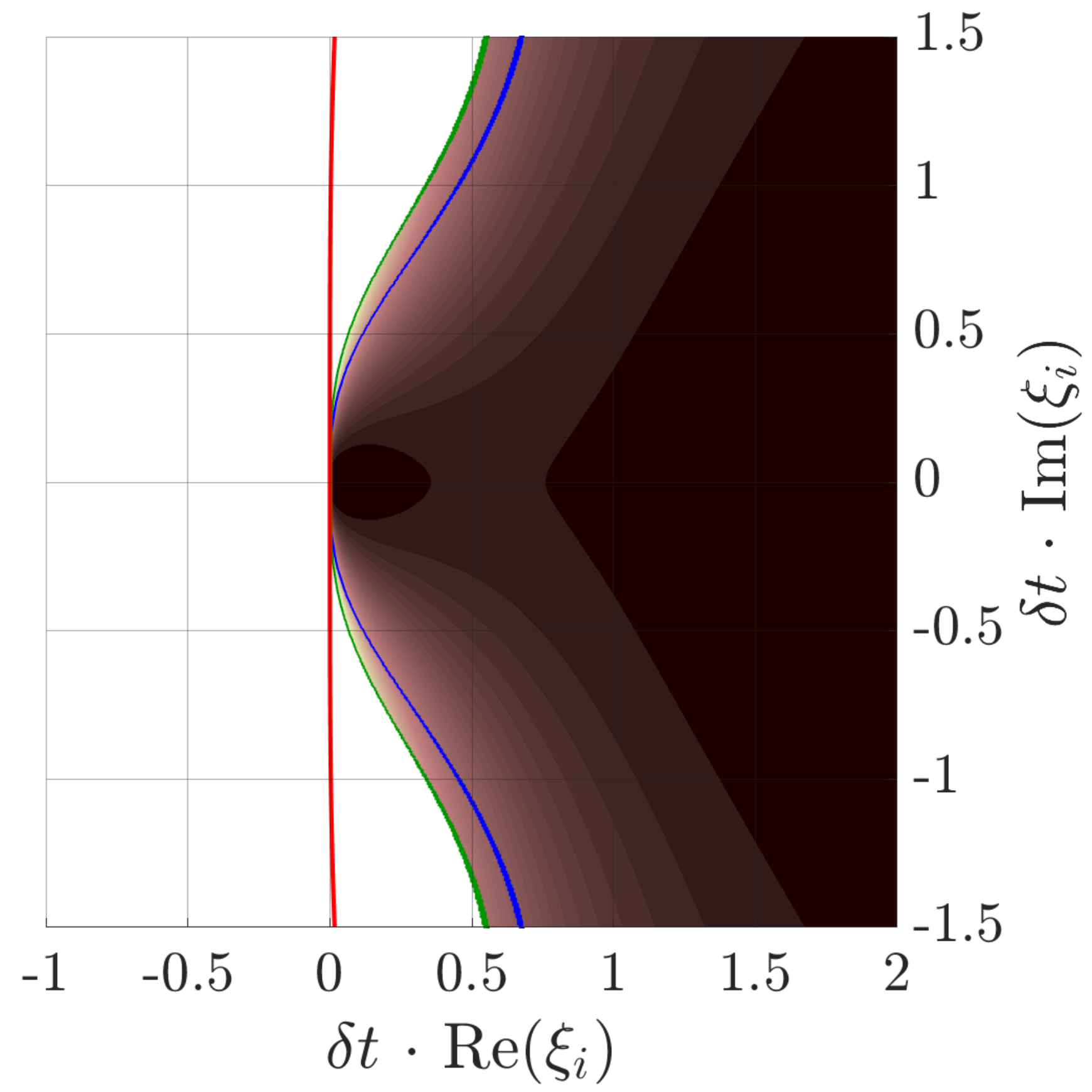}
        \end{subfigure}
        \end{center}
    \includegraphics[trim={0 0.5cm 0 3.5cm},clip,width=5in]{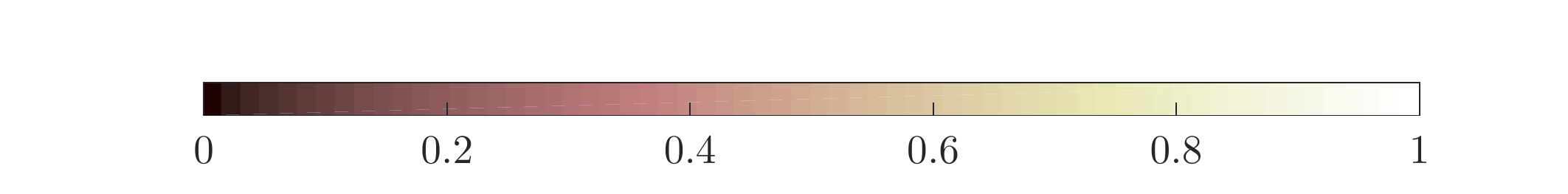}
    \caption{
        Convregence bounds for two-level MGRIT with F-relaxation and $k=4$, for A- and L-stable SDIRK schemes, of order $1$ and $2$, as a function
        of spatial eigenvalues $\{\xi_i\}$ in the complex plane. Red lines indicate the stability region of the integration scheme
        (stable left of the line). The top row shows two-grid convergence rates for all but one iteration \eqref{eq:tap_f}, with
        the green line marking the boundary of convergence. Similarly, the second row shows single-iteration two-grid
        convergence \eqref{eq:tight_2grid}, with blue line marking the boundary of convergence. The final row shows the
        difference in convergence between single-iteration and further two-grid iterations.
        }
    \label{fig:neumann}
\end{figure}

There are a few interesting points to note from \Cref{fig:neumann}. First, the 2nd-order L-stable
scheme yields good convergence over a far larger range of spatial eigenvalues and time steps than
the A-stable scheme. The better performance of L-stable schemes is discussed in detail in \cite{19c_mgrit},
primarily for the case of SPD spatial operators. However, some of the results extend to the complex
setting as well. In particular, if $\Psi$ is L-stable, then as $\delta_t|\xi_i| \to\infty$ two-level MGRIT
is convergent. This holds even on the imaginary axis, a spatial eigenvalue regime known to cause
difficulties for parallel-in-time algorithms. As a consequence, it is possible there are compact regions in the positive
half plane where two-level MGRIT will not converge, but convergence is guaranteed at the origin and
outside of these isolated regions. Convergence for large time steps is particularly relevant for multilevel schemes, because the coarsening procedure increases $\delta t$ exponentially. Such a result
does not hold for A-stable schemes, as seen in \Cref{fig:neumann}.

Second, \Cref{fig:neumann} indicates (at least one reason) why multilevel convergence is hard for hyperbolic
PDEs. It is typical for discretizations of hyperbolic PDEs to have spectra that push up against the imaginary
axis. From the limit of a skew symmetric matrix with purely imaginary
eigenvalues to more diffusive discretizations with nonzero real eigenvalue parts, it is usually
the case that there are small (in magnitude) eigenvalues with dominant imaginary components.
This results in eigenvalues pushing against the imaginary axis close to the origin. In the two-level setting,
backward Euler still guarantees convergence in the right half plane (see top left of \Cref{fig:neumann}),
regardless of imaginary eigenvalues. Note that to our knowledge, no other implicit scheme is convergent
for the entire right half plane. However, when considering single iteration convergence as a proxy for multilevel
convergence, we see in \Cref{fig:neumann} that even backward Euler is not convergent in a
small region along the imaginary axis. In particular, this region of non-convergence corresponds to
small eigenvalues with dominant imaginary parts, exactly the eigenvalues that typically arise in
discretizations of hyperbolic PDEs.

Other effects of imaginary components of eigenvalues are discussed in the following section, and
can be seen in results in \Cref{sec:adv}.

\section{Theoretical bounds in practice}\label{sec:imag}

\subsection{Convergence and imaginary eigenvalues}

One important point that follows from \cite{southworth19} is that convergence of MGRIT and Parareal
\textit{only depends on the discrete spatial and temporal problem}. Hyperbolic PDEs are known to
be difficult for PinT methods. However, for MGRIT/Parareal, it is not directly the (continuous)
hyperbolic PDE that causes difficulties, but rather its discrete representation. Spatial
discretizations of hyperbolic PDEs (when diagonalizable) often have eigenvalues with relatively large
imaginary components compared to the real component, particularly as magnitude $|\lambda| \to 0$.
In this section, we look at why eigenvalues with dominant imaginary part are difficult for MGRIT
and Parareal. The results are limited to diagonalizable operators, but give new insight on (the
known fact) that diffusivity of the backward Euler scheme makes it more amenable to PinT methods.
We also look at the relation of temporal problem size and coarsening factor to
convergence, which is particularly important for such discretizations, as well as the disappointing acceleration
of FCF-relaxation. Note, least-squares discretizations have been developed for hyperbolic PDEs that
result in an SPD spatial matrix (e.g., \cite{manteuffel1999boundary}),
which would immediately overcome the problems that arise with
complex/imaginary eigenvalues discussed in this section. Whether a given
discretization provides the desired
approximation to the continuous PDE is a different question.

\subsubsection{Problem size, and FCF-relaxation:}

Consider the exact solution to the linear time propagation problem, $\mathbf{u}_t = \mathcal{L}\mathbf{u}$, given by
\begin{align*}
\hat{\mathbf{u}} := e^{-\mathcal{L}t}\mathbf{u}.
\end{align*}
Then, an exact time step of size $\delta t$ is given by $\mathbf{u} \mapsfrom
e^{-\mathcal{L}\delta t}\mathbf{u}$. Runge-Kutta schemes are designed to
approximate this (matrix) exponential as a rational function, to order $\delta
t^p$ for some $p$.  Now suppose $\mathcal{L}$ is diagonalizable. Then,
propagating the $i$th eigenvector, say $\mathbf{v}_i$, forward in time by
$\delta t$, is achieved through the action $e^{-\delta t\xi_i}\mathbf{v}_i$,
where $\xi_i$ is the $i$th corresponding eigenvalue. Then the exact solution to
propagating $\mathbf{v}_i$ forward in time by $\delta t$ is given by
\begin{align}\label{eq:exp}
e^{-\delta t\xi_i} =e^{-\delta t \textnormal{Re}(\xi_i)} \Big( \cos(\delta t\textnormal{Im}(\xi_i)) - \mathrm{i}\sin(\delta t\textnormal{Im}(\xi_i))\Big).
\end{align}
If $\xi_i$ is purely imaginary, raising \eqref{eq:exp} to a power $k$,
corresponding to $k$ time steps, yields the function $e^{\pm\mathrm{i}k\delta
t|\xi_i|} = \cos(k\delta t|\xi_i|) \pm \mathrm{i}\sin(k\delta t|\xi_i|)$. This
function is (i) magnitude one for all $k,\delta t$, and $\xi$, and (ii) performs
exactly $k$ revolutions around the origin in a circle of radius one. Even though
we do not compute the exact exponential when integrating in time, we do
approximate it, and this perspective gives some insight on why operators with
imaginary eigenvalues tend to be particularly difficult for MGRiT and Parareal.

Recall the convergence bounds developed and stated in \Cref{sec:simple} as well
as \cite{southworth19} have a $\mathcal{O}(1/\sqrt{N_c})$ term in the
denominator. In many cases, the $\mathcal{O}(1/\sqrt{N_c})$ term is in some
sense arbitrary and the bounds in \Cref{th:conv} are relatively tight for
practical $N_c$. However, for some problems with spectra or field-of-values
aligned along the imaginary axis, convergence can differ significantly as a
function of $N_c$ (see \cite{19c_mgrit}). To that end, we restate Theorem 30
from \cite{southworth19}, which provides tight upper and lower bounds, including
the constants on $N_c$, for the case of diagonalizable operators.
\begin{theorem}[Tight bounds -- the diagonalizable case]\label{th:diag_tight}
Let $\Phi$ denote the fine-grid time-stepping operator and $\Psi$ denote the coarse-grid time-stepping operator,
with coarsening factor $k$, and $N_c$ coarse-grid time points. Assume that $\Phi$ and $\Psi$ commute and are
diagonalizable, with eigenvectors as columns of $U$, and spectra $\{\lambda_i\}$ and $\{\mu_i\}$, respectively.
Then, \textit{worst-case convergence} of error (and residual) in the $(UU^*)^{-1}$-norm is exactly bounded by
{\small
\begin{align*}
\sup_j \left( \frac{|\mu_j - \lambda_j^k|}{\sqrt{ (1 - |\mu_j|)^2 + \frac{\pi^2|\mu_j|}{N_c^2}}}\right)
  & \leq \frac{\|\mathbf{e}_{i+1}^{(F)}\|_{(UU^*)^{-1}}}{\|\mathbf{e}_i^{(F)}\|_{(UU^*)^{-1}}}
  \leq \left(\sup_j  \frac{|\mu_j - \lambda_j^k|}{\sqrt{(1 - |\mu_j|)^2 + \frac{\pi^2|\mu_j|}{6N_c^2}} } \right), \\
\sup_j \left(\frac{|\lambda_j^k||\mu_j - \lambda_j^k|}{\sqrt{ (1 - |\mu_j|)^2 + \frac{\pi^2|\mu_j|}{N_c^2}}}\right)
  & \leq \frac{\|\mathbf{e}_{i+1}^{(FCF)}\|_{(UU^*)^{-1}}}{\|\mathbf{e}_i^{(FCF)}\|_{(UU^*)^{-1}}}
  \leq \left( \sup_j  \frac{|\lambda_j^k||\mu_j - \lambda_j^k|}{\sqrt{(1 - |\mu_j|)^2 + \frac{\pi^2|\mu_j|}{6N_c^2}} } \right),
\end{align*}
}
for all but the last iteration (or all but the first iteration for residual).
\end{theorem}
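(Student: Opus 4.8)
The plan is to reduce \Cref{th:diag_tight} to a single scalar eigenvalue problem per spatial mode by using the closed-form norm expressions of \Cref{sec:simple:norm}, and then to pin down the constants on $N_c$ through the spectral analysis of a bidiagonal Toeplitz matrix. First I would decouple into spatial modes. Since $\Phi$ and $\Psi$ commute and are diagonalizable with common eigenvector matrix $U$, all of the block-bidiagonal operators of \Cref{sec:simple:norm} ($\widetilde{B}_\Delta$, $\widetilde{D}$, $\widehat{B}_\Delta$, $\overline{B}_\Delta$, $\widetilde{D}_{fcf}$, as well as $A_\Delta$ and $B_\Delta$) are block-Toeplitz with blocks that are polynomials in $\Phi$ and $\Psi$. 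Conjugating by the block-diagonal matrix $\widetilde{U} := I\otimes U$ therefore simultaneously block-diagonalizes all of them into $\Theta(N_c)$ \emph{scalar} bidiagonal Toeplitz matrices, one for each eigenpair $(\lambda_j,\mu_j)$. A short computation gives $\|M\|_{(UU^*)^{-1}} = \|U^{-1}MU\|_2$ (and likewise with $\widetilde{U}$ on space-time operators), so the weighted norm in the statement is exactly the one in which this conjugation is an isometry; it is also why $\|\mathcal{E}\|_{(UU^*)^{-1}}=\|\mathcal{R}\|_{(UU^*)^{-1}}$ for each relaxation scheme, the two block-diagonalized operators differing only by a transpose and a reversal of the time ordering, which are invisible in the scalar (per-mode) setting.

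Next I would write down the scalar problem. Applying, mode by mode, the closed forms $\|\widetilde{\mathcal{R}}_F\|=1/\sigma_{\min}(\widetilde{B}_\Delta\widetilde{D}^{-1})$, $\|\widetilde{\mathcal{E}}_F\|=1/\sigma_{\min}(\widetilde{D}^{-1}\widehat{B}_\Delta)$, and $\|\widetilde{\mathcal{E}}_{FCF}\|=1/\sigma_{\min}(\widetilde{D}_{fcf}^{-1}\overline{B}_\Delta)$, the $j$th mode contributes the value
\[
\frac{|\mu_j-\lambda_j^k|}{\sigma_{\min}\!\big(I-\mu_j S_p\big)} \quad\text{(F)},\qquad
\frac{|\lambda_j^k|\,|\mu_j-\lambda_j^k|}{\sigma_{\min}\!\big(I-\mu_j S_{p'}\big)} \quad\text{(FCF)},
\]
where $S_q$ denotes the $q\times q$ nilpotent lower shift, $p,p'=\Theta(N_c)$ are the reduced sizes, and the extra $|\lambda_j^k|$ in the FCF case comes from the diagonal factor $(\Phi^k-\Psi)\Phi^k$ that (pre-)FCF-relaxation introduces in $(B_\Delta-A_\Delta)(I-A_\Delta)$. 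Taking the maximum over $j$ recovers $\|\widetilde{\mathcal{R}}_\cdot\|_{(UU^*)^{-1}}$, and because the per-mode value is attained exactly by vectors supported on a single eigenspace, the resulting bound is a genuine worst case, not merely an upper bound. The fact that \eqref{eq:err_res} reduces iteration to $\widetilde{\mathcal{E}},\widetilde{\mathcal{R}}$ with the outer factors $-A_{ff}^{-1}A_{fc}$ and $-A_{cf}A_{ff}^{-1}$ applied only once is precisely what produces the ``all but the last (first) iteration'' caveat.

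Finally I would compute the singular values of the scalar bidiagonal Toeplitz matrix. Since $\Psi$ is stable, $|\mu_j|<1$, so $I-\mu_j S_q$ is nonsingular; a diagonal unitary scaling reduces it to $I-|\mu_j|S_q$, and $\sigma_{\min}(I-\mu_j S_q)^2=\lambda_{\min}(B_q)$, where $B_q=(I-|\mu_j|S_q)^*(I-|\mu_j|S_q)$ is the tridiagonal Toeplitz matrix $\mathrm{tridiag}(-|\mu_j|,\,1+|\mu_j|^2,\,-|\mu_j|)$ with its last diagonal entry reduced from $1+|\mu_j|^2$ to $1$. The unperturbed tridiagonal Toeplitz matrix has eigenvalues $(1-|\mu_j|)^2+4|\mu_j|\sin^2\!\big(\tfrac{\pi\ell}{2(q+1)}\big)$, $\ell=1,\dots,q$, the smallest at $\ell=1$; the single corner modification is a rank-one negative semidefinite perturbation, and $\lambda_{\min}(B_q)$ is located relative to this value via the characteristic-polynomial three-term recurrence, exactly as in the block-Toeplitz analysis of \cite{southworth19}. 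Together with $\tfrac{2}{\pi}x\le\sin x\le x$ and a second-order estimate of $1-\cos$, this yields
\[
(1-|\mu_j|)^2+\frac{\pi^2|\mu_j|}{6N_c^2}\;\le\;\sigma_{\min}(I-\mu_j S_q)^2\;\le\;(1-|\mu_j|)^2+\frac{\pi^2|\mu_j|}{N_c^2},
\]
with $q+1$ of order $N_c$, and substituting into the scalar values above and reciprocating gives the two-sided bounds claimed; the FCF estimates follow identically.

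The hard part will be the \emph{lower} bound in the last step, namely $\lambda_{\min}(B_q)\ge (1-|\mu_j|)^2+\pi^2|\mu_j|/(6N_c^2)$. The corner modification pushes $\lambda_{\min}(B_q)$ strictly below the sampled-symbol value $(1-|\mu_j|)^2+4|\mu_j|\sin^2(\pi/2(q+1))$, so one genuinely has to control how far it drops using the recurrence (crude Weyl or interlacing arguments lose a full $|\mu_j|^2$ and are useless here), and then balance this decrease against the gain $4|\mu_j|\sin^2(\pi/2(q+1))$ with trigonometric estimates sharp enough that the constant $\pi^2/6$ survives. Everything else is bookkeeping built on the closed forms already in hand.
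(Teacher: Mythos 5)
The paper does not actually prove \Cref{th:diag_tight}; it is restated verbatim from Theorem~30 of \cite{southworth19}, so the comparison here is with that cited proof. Your outline follows essentially the same route: simultaneous block-diagonalization by $\widetilde{U}$ (which is exactly why the $(UU^*)^{-1}$-norm appears), reduction of the closed forms from \Cref{sec:simple:norm} to the per-mode scalar quantity $|\mu_j-\lambda_j^k|/\sigma_{\min}(I-\mu_j S_q)$ with the extra $|\lambda_j^k|$ for FCF, and then the spectral analysis of $(I-\mu_j S_q)^*(I-\mu_j S_q)$, a tridiagonal Toeplitz matrix with one corner entry reduced by $|\mu_j|^2$. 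Your identification of the unperturbed eigenvalues $(1-|\mu_j|)^2+4|\mu_j|\sin^2(\pi\ell/(2(q+1)))$ and of the upper bound on $\sigma_{\min}^2$ (hence the lower convergence bound with constant $\pi^2/N_c^2$) is correct, and your explanation of the ``all but the last (first) iteration'' caveat via the once-applied outer factors in \eqref{eq:err_res} is the right one.

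The one thing you have not actually done is the step you yourself flag: the lower bound $\lambda_{\min}(B_q)\ge(1-|\mu_j|)^2+\pi^2|\mu_j|/(6N_c^2)$, which is where the constant $\pi^2/6$ lives and which is the entire content of the ``tight'' upper convergence bound. Naming the strategy (track the root of the corner-modified characteristic polynomial via the three-term recurrence, as in \cite{southworth19} and the perturbed-Toeplitz eigenvalue literature the paper cites for \Cref{cor:new2grid}) is the correct plan, but as written the proposal asserts the inequality rather than establishing it; interlacing or Weyl-type arguments indeed do not suffice, since the rank-one perturbation has norm $|\mu_j|^2$, which swamps the $O(1/N_c^2)$ gain you need to preserve. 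To be a complete proof you would need to carry out that secular-equation estimate explicitly; everything else in your argument is sound and matches the cited proof's structure.
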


The counter-intuitive nature of MGRiT and Parareal convergence is that
convergence is defined by how well $k$ steps on the fine-grid
\textit{approximate the coarse-grid operator}. That is, in the case of
eigenvalue bounds, we must have $|\mu_i - \lambda_i^k|^2 \lessapprox [(1 -
|\mu_i|)^2 + 10/N_c^2$, for each coarse-grid eigenvalue $\mu_i$. Clearly the
important cases for convergence are $|\mu_i|\approx 1$. Returning to
\eqref{eq:exp}, purely imaginary spatial eigenvalues typically lead to $|\mu_i|,
|\lambda_j| \approx 1$, particularly for small $|\delta t\xi_i|$ (because the RK
approximation to the exponential is most accurate near the origin). This has
several effects on convergence:
\begin{enumerate}
\item Convergence will deteriorate as the number of time steps increases,
\item $\lambda_i^k$ must approximate $\mu_i$ highly accurately for many eigenvalues,
\item FCF-relaxation will offer little to no improvement in convergence, and
\item Convergence will be increasingly difficult for larger coarsening factors.
\end{enumerate}

For the first and second points, notice from bounds in \Cref{th:diag_tight} that
the order of $N_c \gg 1$ is generally only significant when $|\mu_i| \approx 1$.
With imaginary eigenvalues, however, this leads to a moderate part of the
spectrum in which $\lambda_i^k$ must approximate $\mu_i$ with accuracy
$\approx1/N_c$, which is increasingly difficult as $N_c\to\infty$. Conversely,
introducing real parts to spatial eigenvalues introduces dissipation in
\eqref{eq:exp}, particularly when raising to powers. Typically the result of
this is that $|\mu_i|$ decreases, leading to fewer coarse-grid eigenvalues that
need to be approximated well, and a lack of dependence on $N_c$. 

In a similar vein, the third point above follows because in terms of
convergence, FCF-relaxation adds a power of $|\lambda_i|^k$ to convergence
bounds compared with F-relaxation. Improved convergence (hopefully due to
FCF-relaxation) is needed when $|\mu_i| \approx 1$. In an unfortunate cyclical
fashion, however, for such eigenvalues, it must be the case that $\lambda_i^k
\approx \mu_i$. But if $|\mu_i| \approx 1$, and $\lambda_i^k \approx \mu_i$, the
additional factor lent by FCF-relaxation, $|\lambda_i|\approx 1$, which tends
to offer at best marginal improvements in convergence. Finally, point four, which
specifies that it will be difficult to observe nice convergence with larger coarsening
factors, is a consequence of points two and three. As $k$ increases, $\Psi$ must
approximate a rational function, $\Phi^k$, of polynomials with progressively
higher degree. When $\Psi$ must approximate $\Phi$ well for many eigenmodes and,
in addition, FCF-relaxation offers minimal improvement in convergence, a
convergent method quickly becomes intractable. 

\subsubsection{Convergence in the complex plane:}

Although eigenvalues do not always provide a good measure of convergence (for
example, see \Cref{sec:adv}), they can provide invaluable information on 
choosing a good time-integration scheme for Parareal/MGRIT. Some of the
properties of a ``good'' time integration scheme transfer from the analysis
of time integration to parallel-in-time, however, some integration schemes
are far superior for parallel-in-time integration, without an obvious/intuitive
reason why. This section demonstrates how we can analyze time-stepping
schemes by considering convergence of two-level MGRIT/Parareal as a function
of eigenvalues in the complex plane. \Cref{fig:irk} and \Cref{fig:erk} plot
the real and imaginary parts of eigenvalues $\lambda\in\sigma(\Phi)$ and
$\mu\in\sigma(\Psi)$ as a function of $\delta t$ time the spatial eigenvalue,
as well as the corresponding two-level convergence for F- and FCF-relaxation,
for an A-stable ESDIRK-33 Runge-Kutta scheme and a 3rd-order explicit Runge-Kutta
scheme, respectively. 

\begin{figure}[!h]
    \centering
    \begin{center}
        \begin{subfigure}[c]{0.25\textwidth}
            \includegraphics[height=\textwidth]{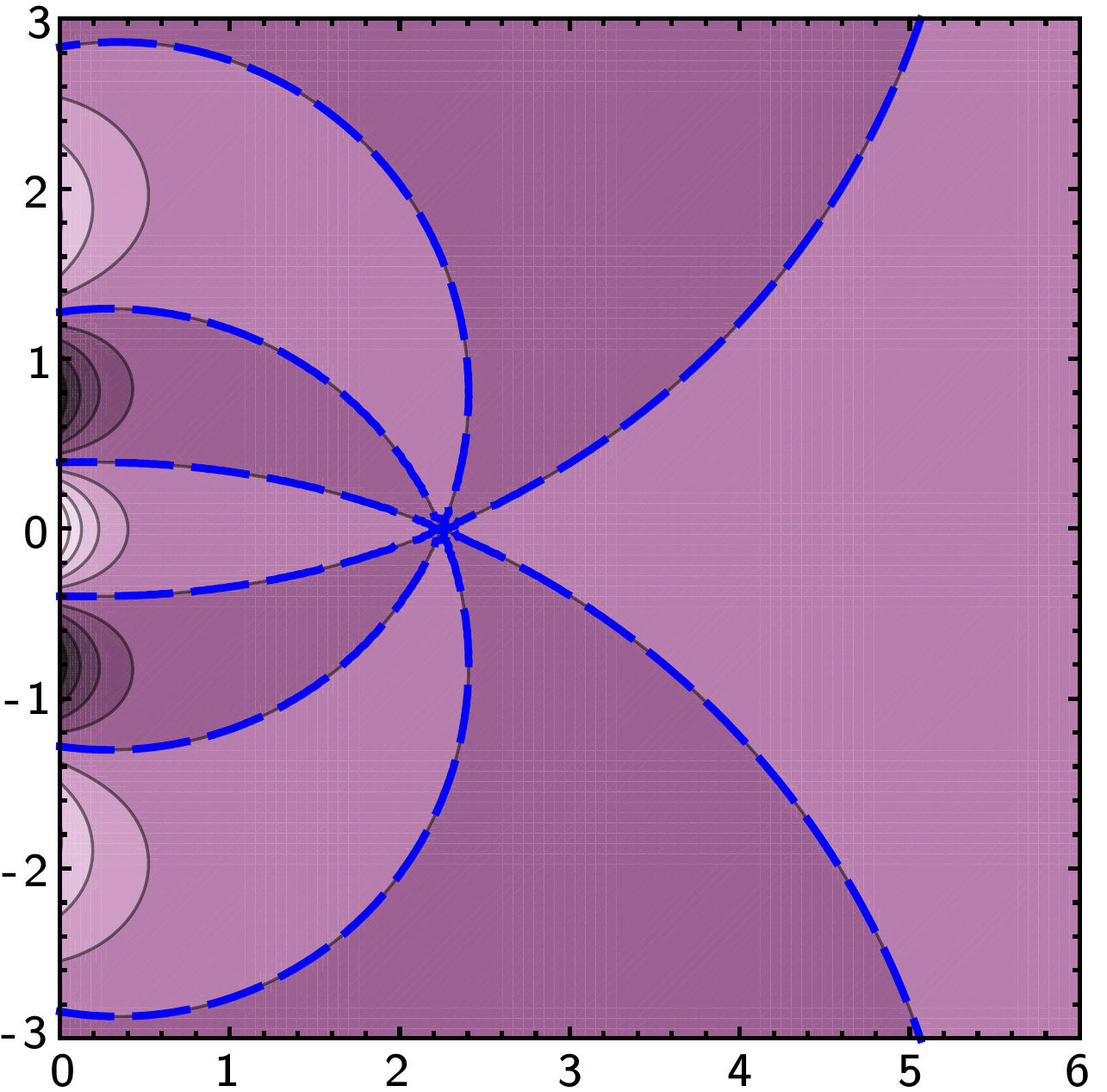}
            \caption{Re$(\lambda^4)$}
        \end{subfigure}
        \begin{subfigure}[c]{0.25\textwidth}
            \includegraphics[height=\textwidth]{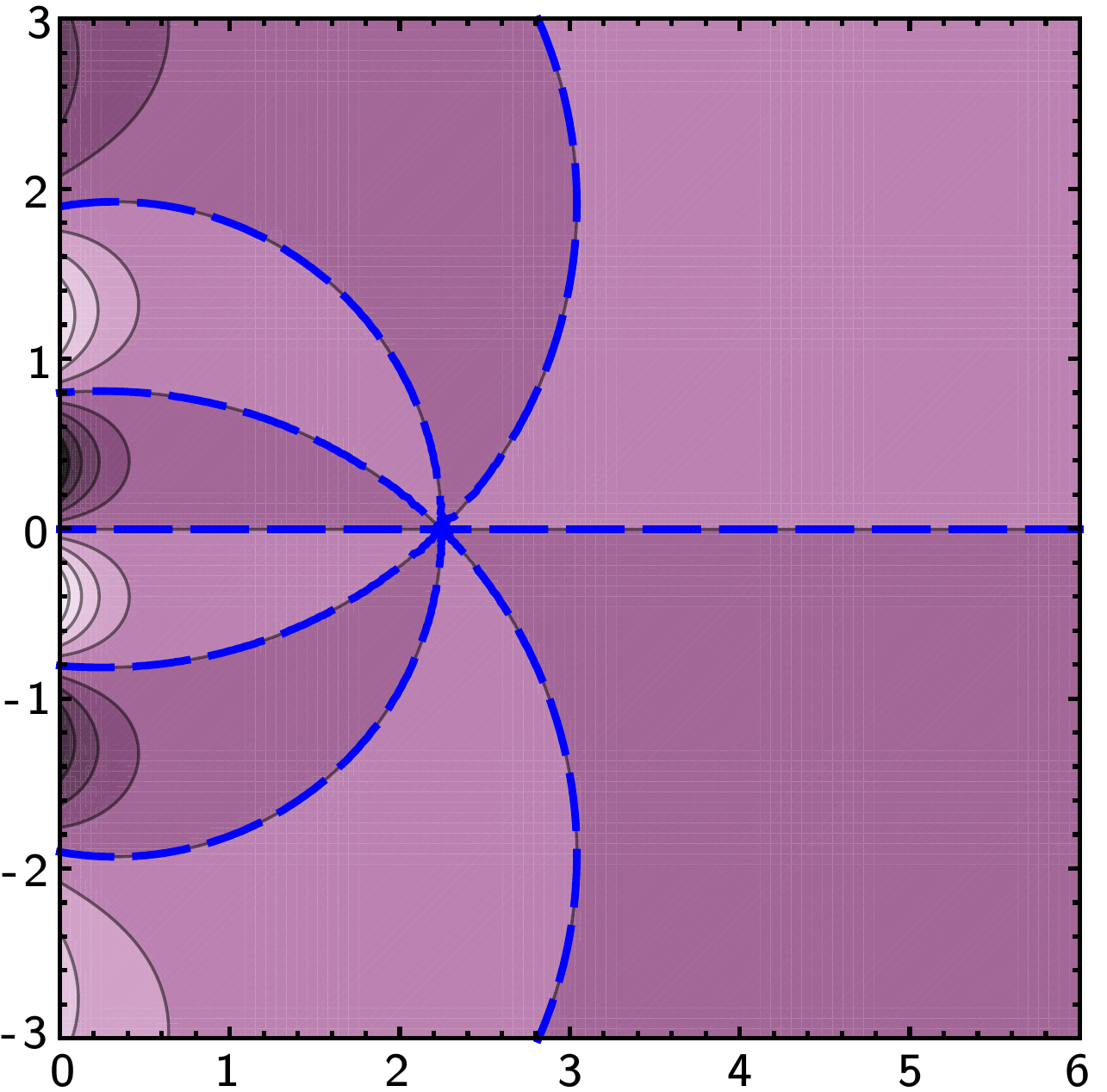}
            \caption{Im$(\lambda^4)$}
        \end{subfigure}
        \begin{subfigure}[b!]{0.055\textwidth}
            \includegraphics[width=\textwidth]{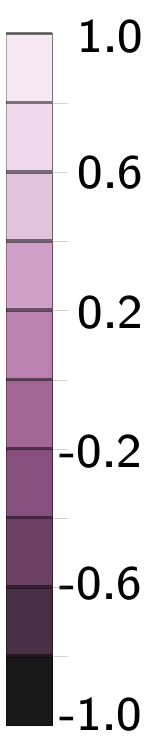}\vspace{5ex}
        \end{subfigure}
        \hspace{1ex}
        \begin{subfigure}[c]{0.25\textwidth}
            \includegraphics[height=\textwidth]{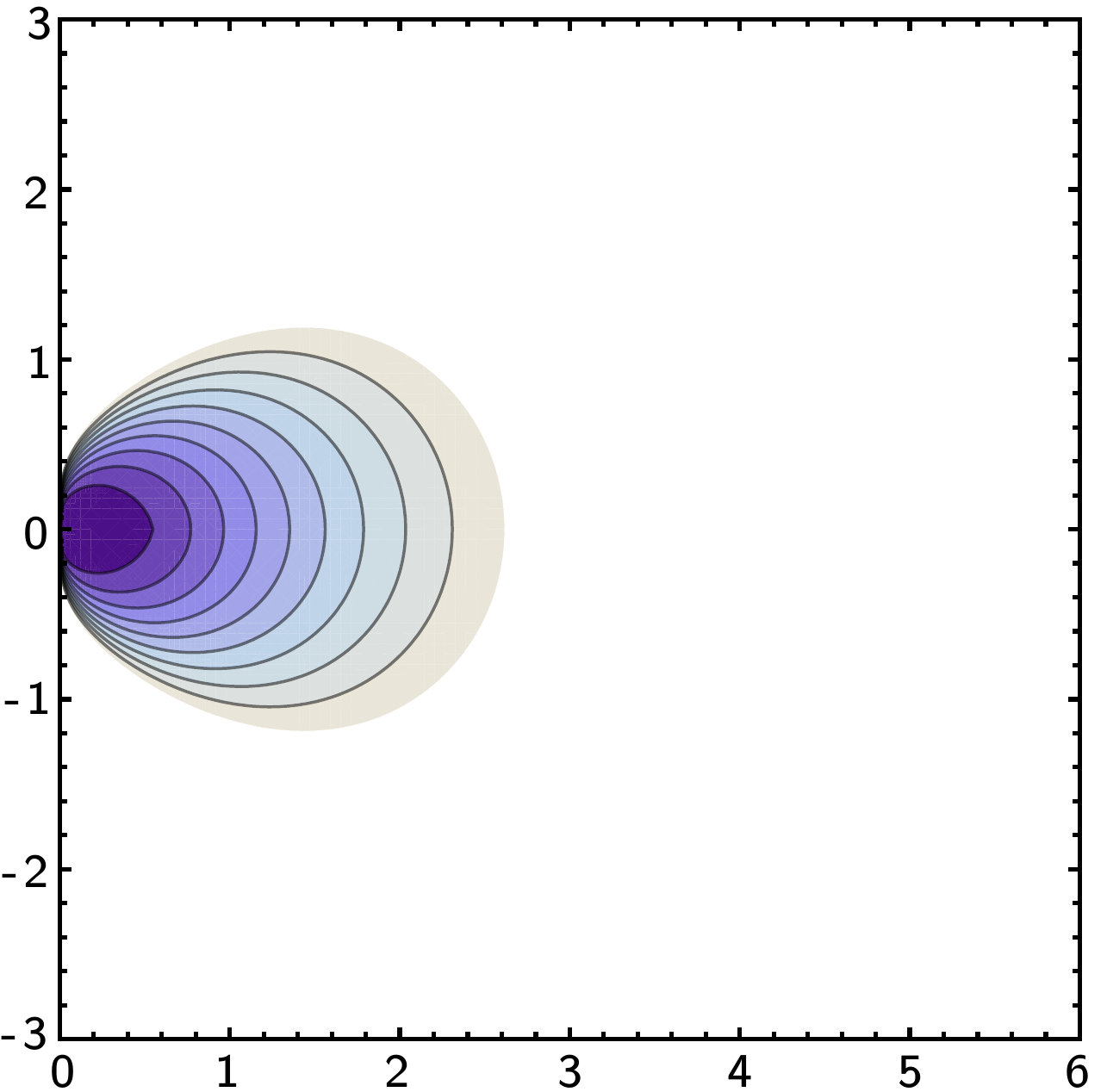}
            \caption{$\varphi_F$}
        \end{subfigure}
\\
        \begin{subfigure}[c]{0.25\textwidth}
            \includegraphics[height=\textwidth]{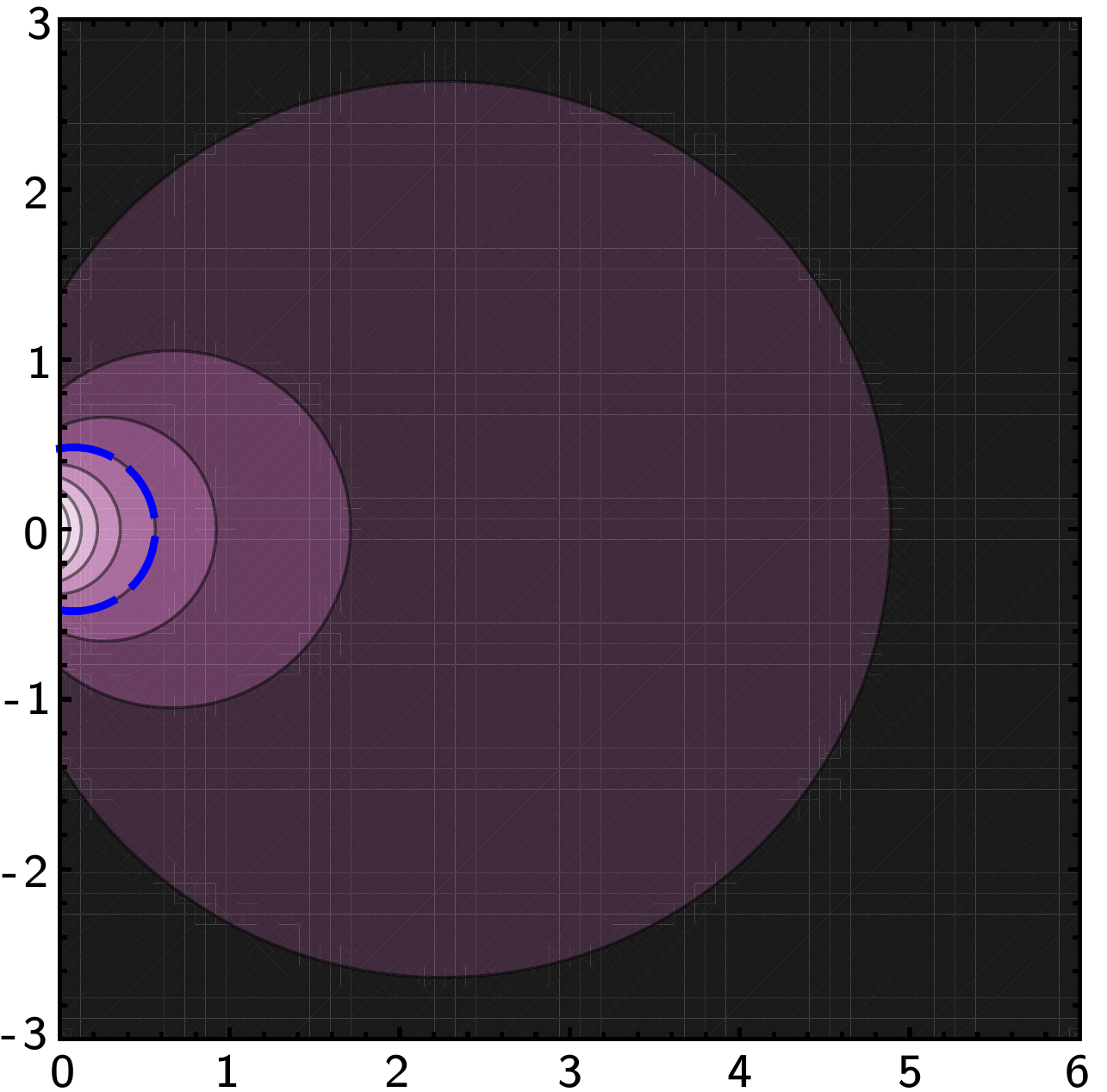}
            \caption{Re$(\mu_4)$}
        \end{subfigure}
        \begin{subfigure}[c]{0.25\textwidth}
            \includegraphics[height=\textwidth]{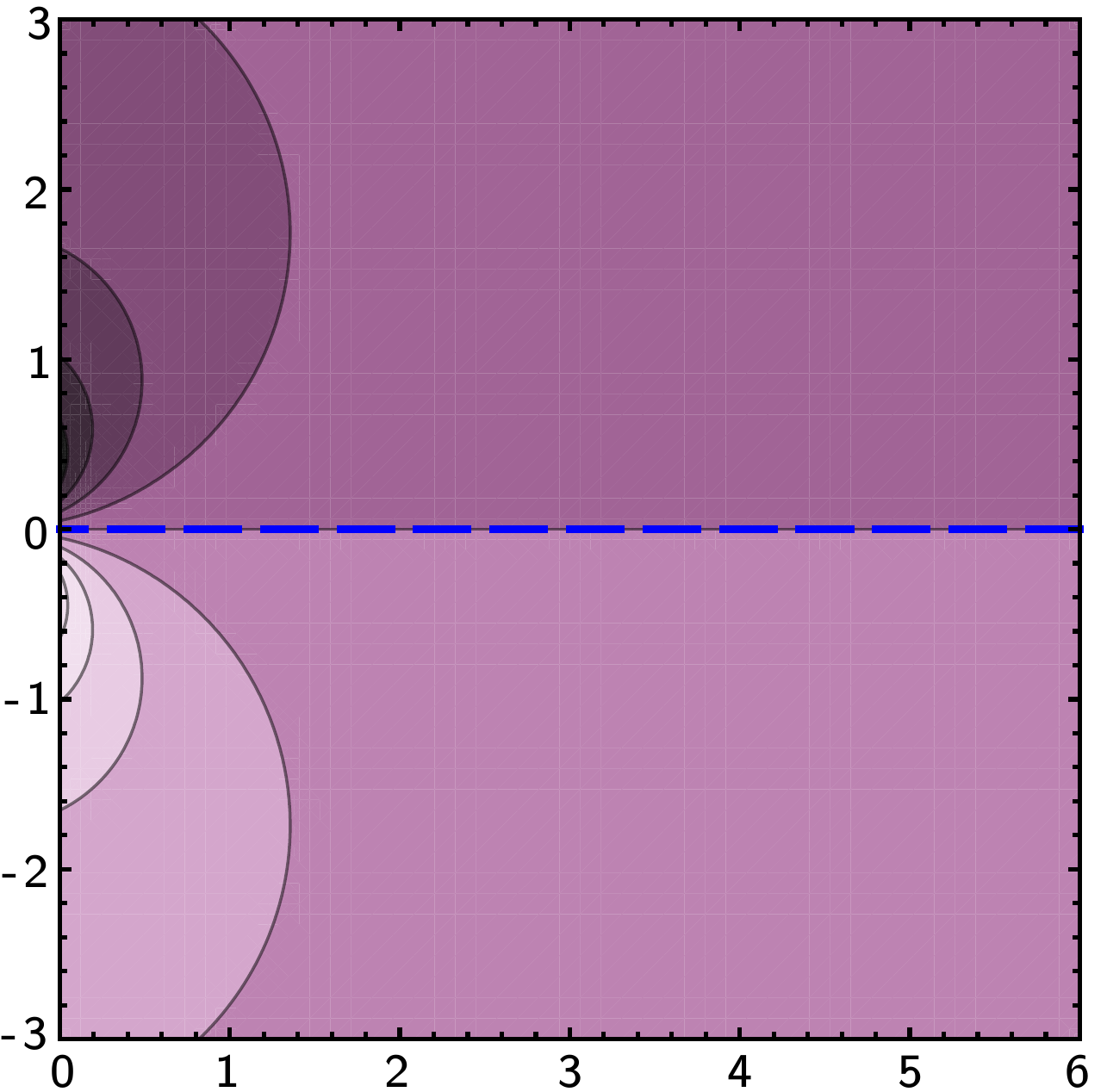}
            \caption{Im$(\mu_4)$}
        \end{subfigure}
        \hspace{1ex}
        \begin{subfigure}[b!]{0.0575\textwidth}
            \includegraphics[width=\textwidth]{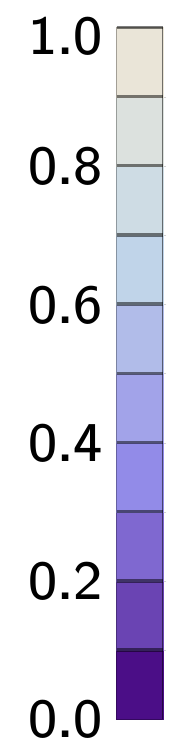}\vspace{5ex}
        \end{subfigure}
        \begin{subfigure}[c]{0.25\textwidth}
            \includegraphics[height=\textwidth]{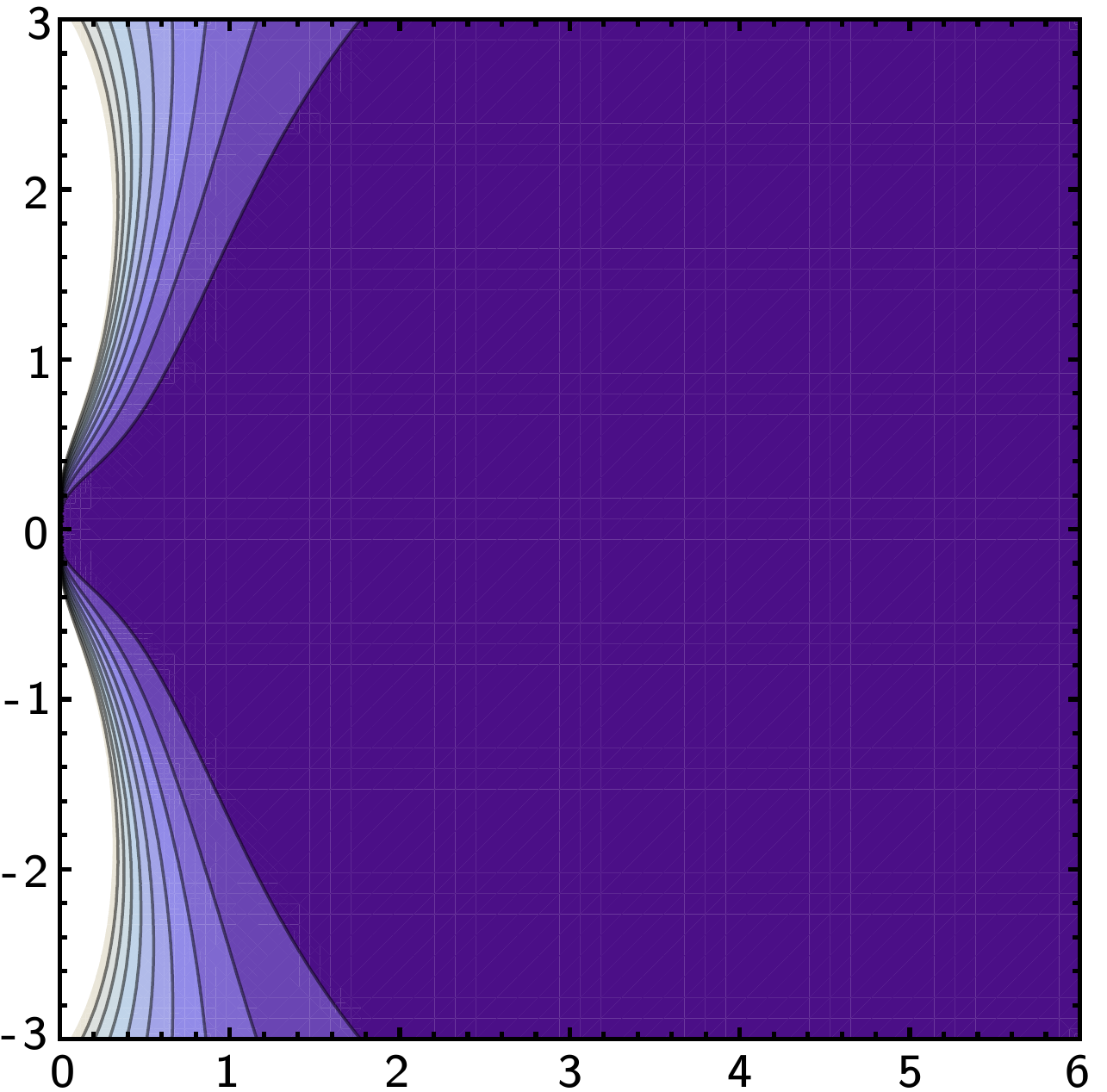}
            \caption{$\varphi_{FCF}$}
        \end{subfigure}
    \end{center}
    \caption{Eigenvalues and convergence bounds for ESDIRK-33, $p=3$ and $k=4$.
    Dashed blue lines indicate sign changes.
    {Note, if we zoom out on the FCF plot, it actually resembles that of F- with
    a diameter of about 100 rather than 2. Thus, for $\delta t \xi \gg 1$,
    even FCF-relaxation does not converge well.}}
    \label{fig:irk}
\end{figure}
\begin{figure}[!h]
    \centering
    \begin{center}
        \begin{subfigure}[c]{0.25\textwidth}
            \includegraphics[height=\textwidth]{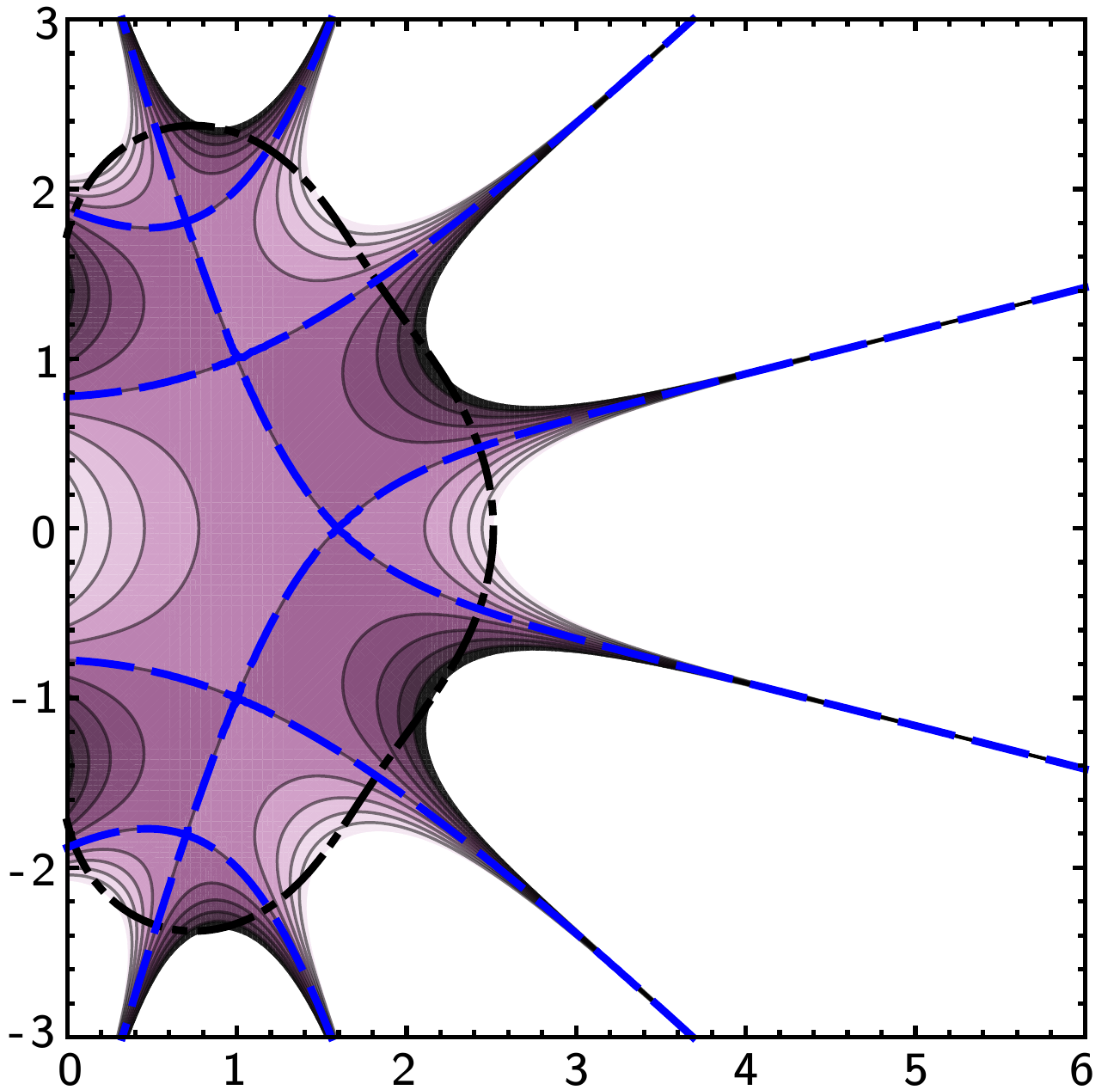}
            \caption{Re$(\lambda^2)$}
        \end{subfigure}
        \begin{subfigure}[c]{0.25\textwidth}
            \includegraphics[height=\textwidth]{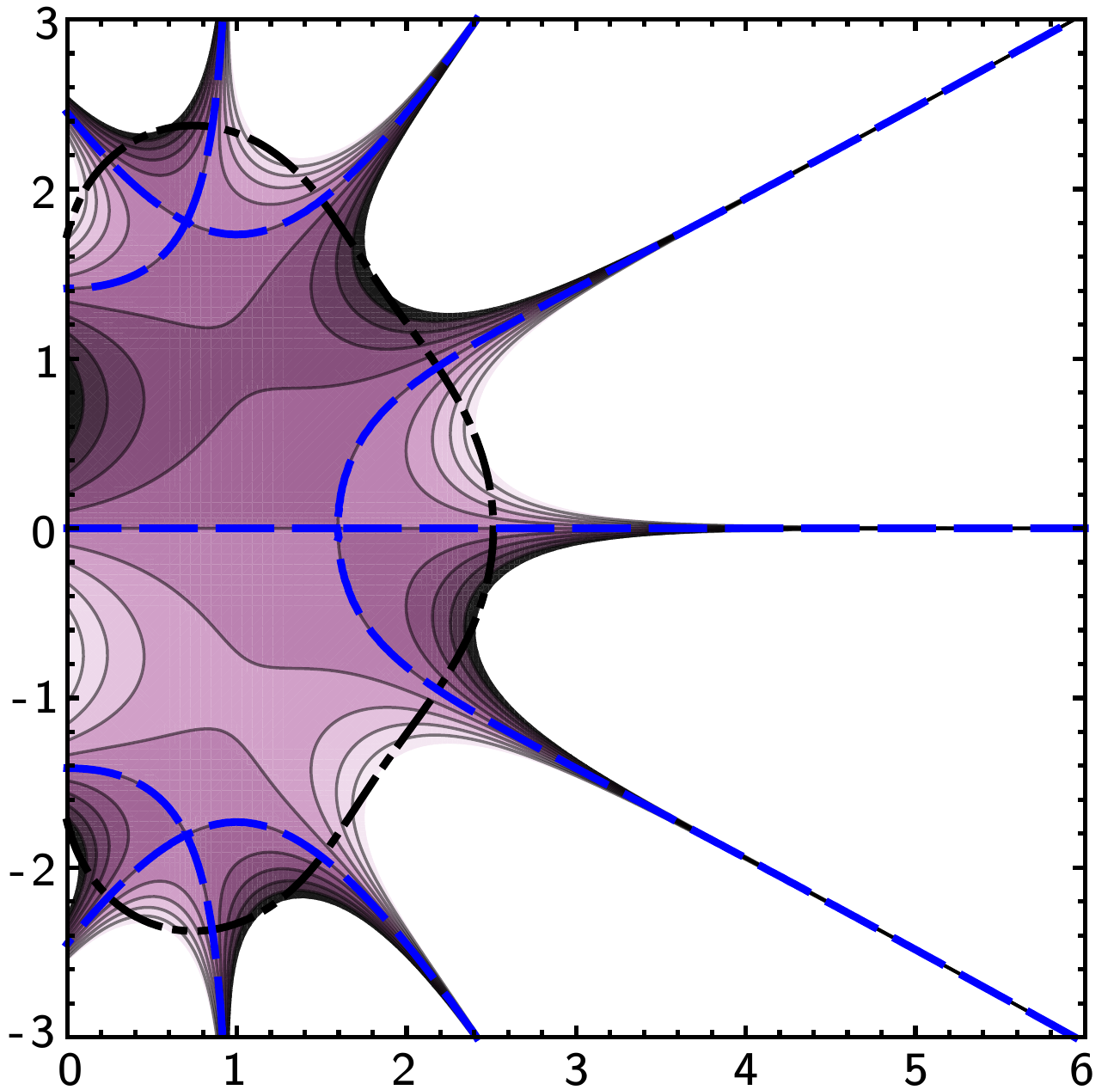}
            \caption{Im$(\lambda^2)$}
        \end{subfigure}
        \begin{subfigure}[b!]{0.055\textwidth}
            \includegraphics[width=\textwidth]{colorbar_-1-1.pdf}\vspace{6ex}
        \end{subfigure}
        \hspace{1ex}
        \begin{subfigure}[c]{0.25\textwidth}
            \includegraphics[height=\textwidth]{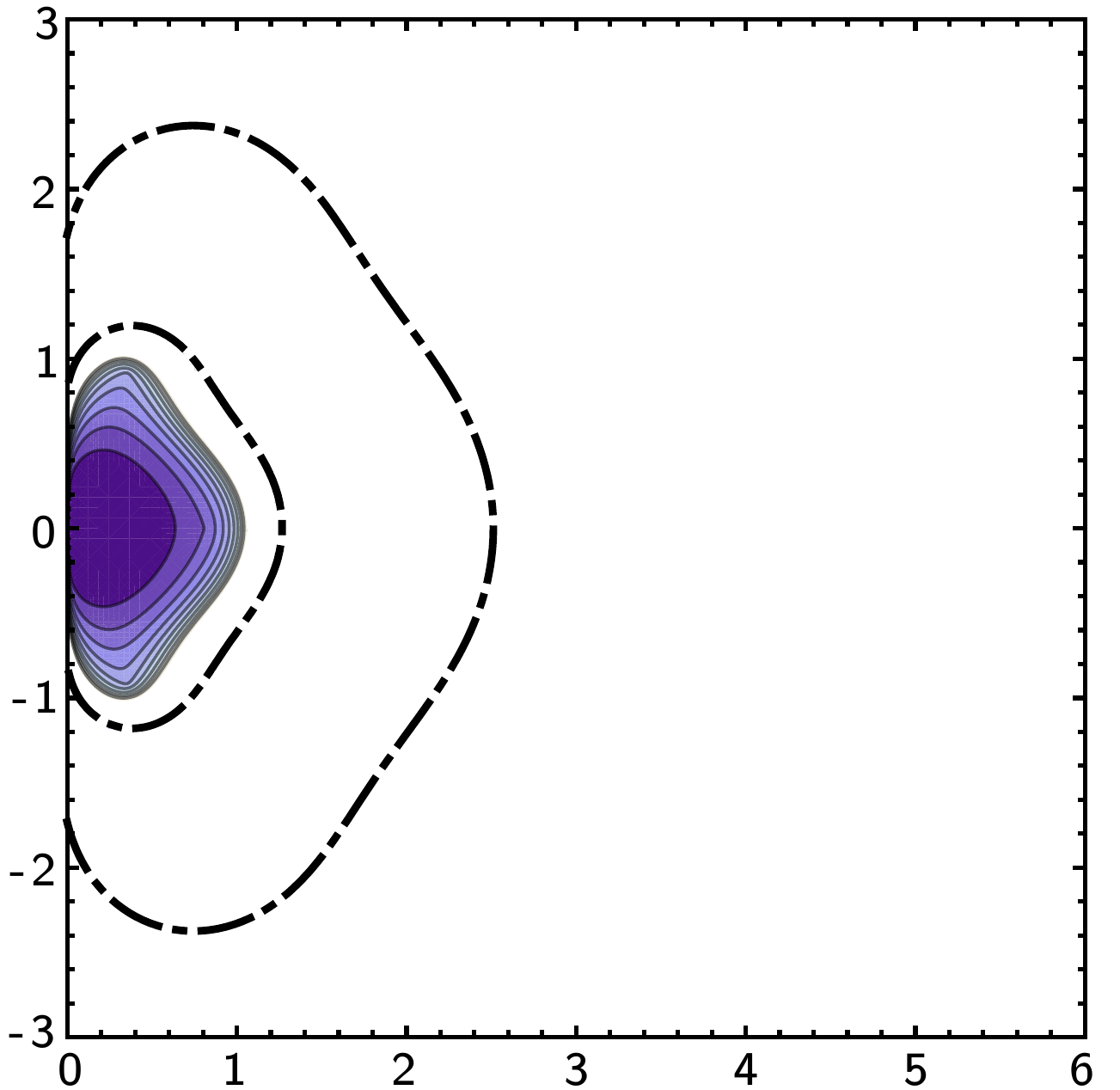}
            \caption{$\varphi_F$}
        \end{subfigure}
\\
        \begin{subfigure}[c]{0.25\textwidth}
            \includegraphics[height=\textwidth]{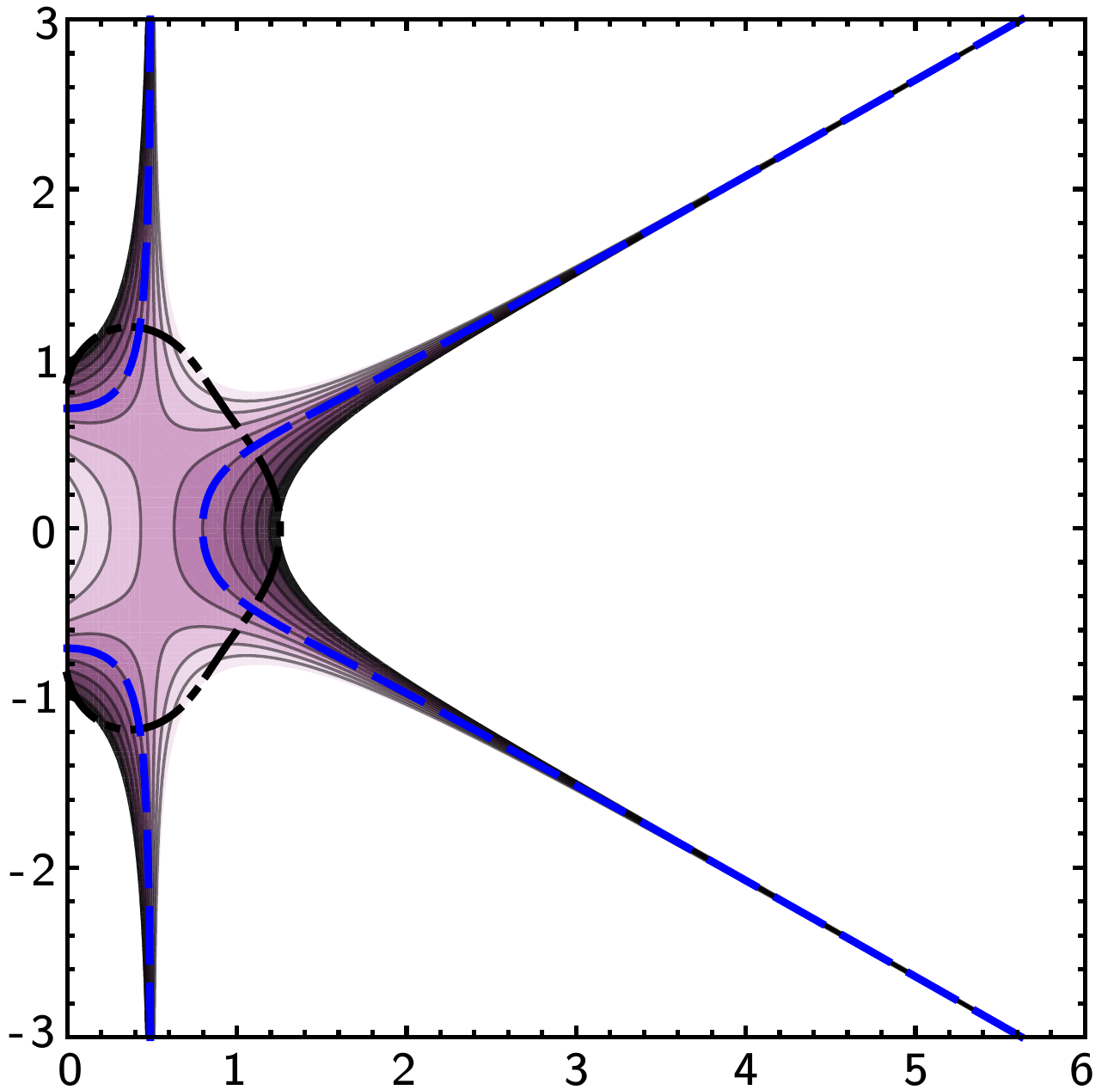}
            \caption{Re$(\mu_2)$}
        \end{subfigure}
        \begin{subfigure}[c]{0.25\textwidth}
            \includegraphics[height=\textwidth]{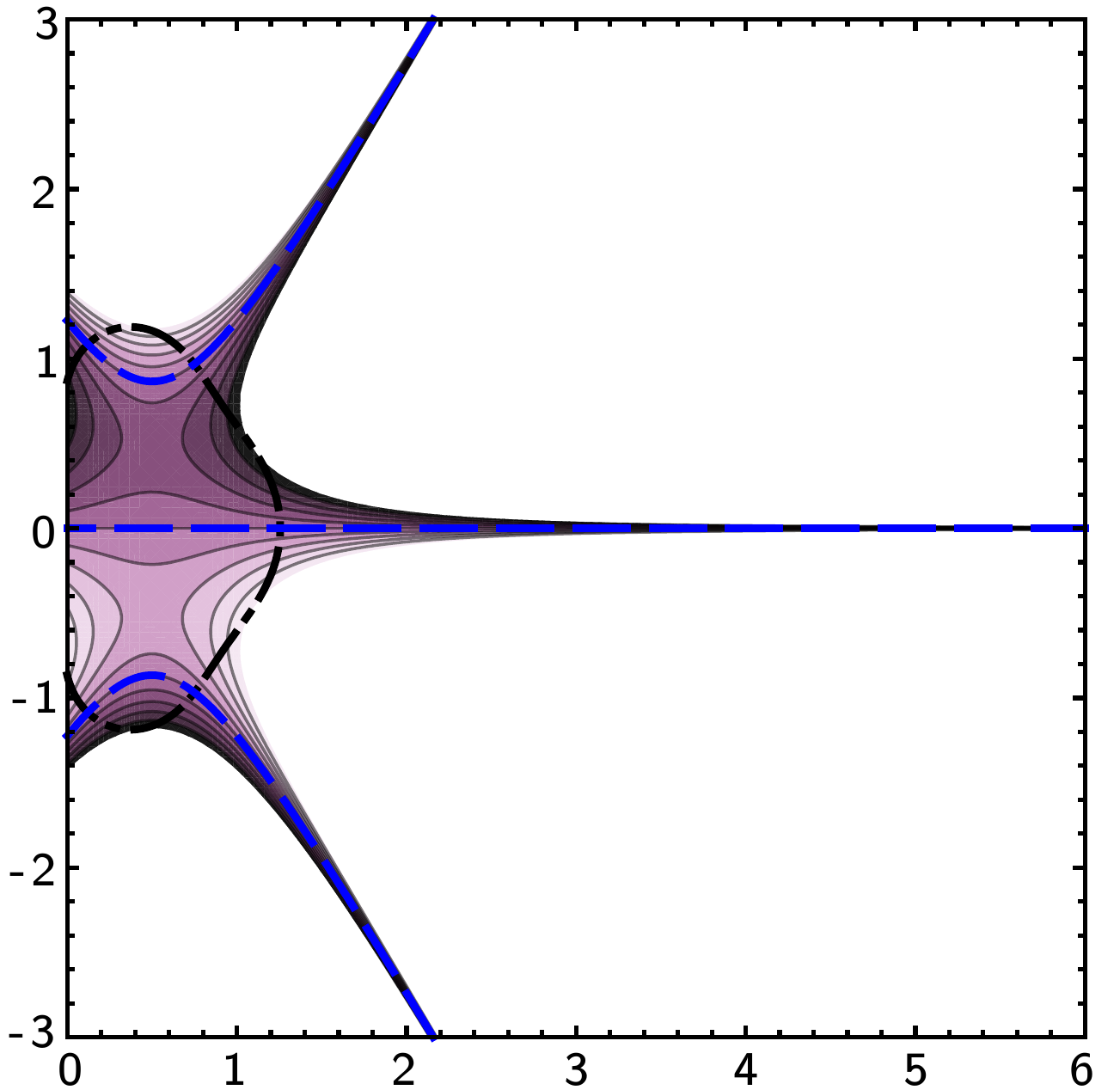}
            \caption{Im$(\mu_2)$}
        \end{subfigure}
        \hspace{1ex}
        \begin{subfigure}[b!]{0.0575\textwidth}
            \includegraphics[width=\textwidth]{colorbar_0-1.pdf}\vspace{6ex}
        \end{subfigure}
        \begin{subfigure}[c]{0.25\textwidth}
            \includegraphics[height=\textwidth]{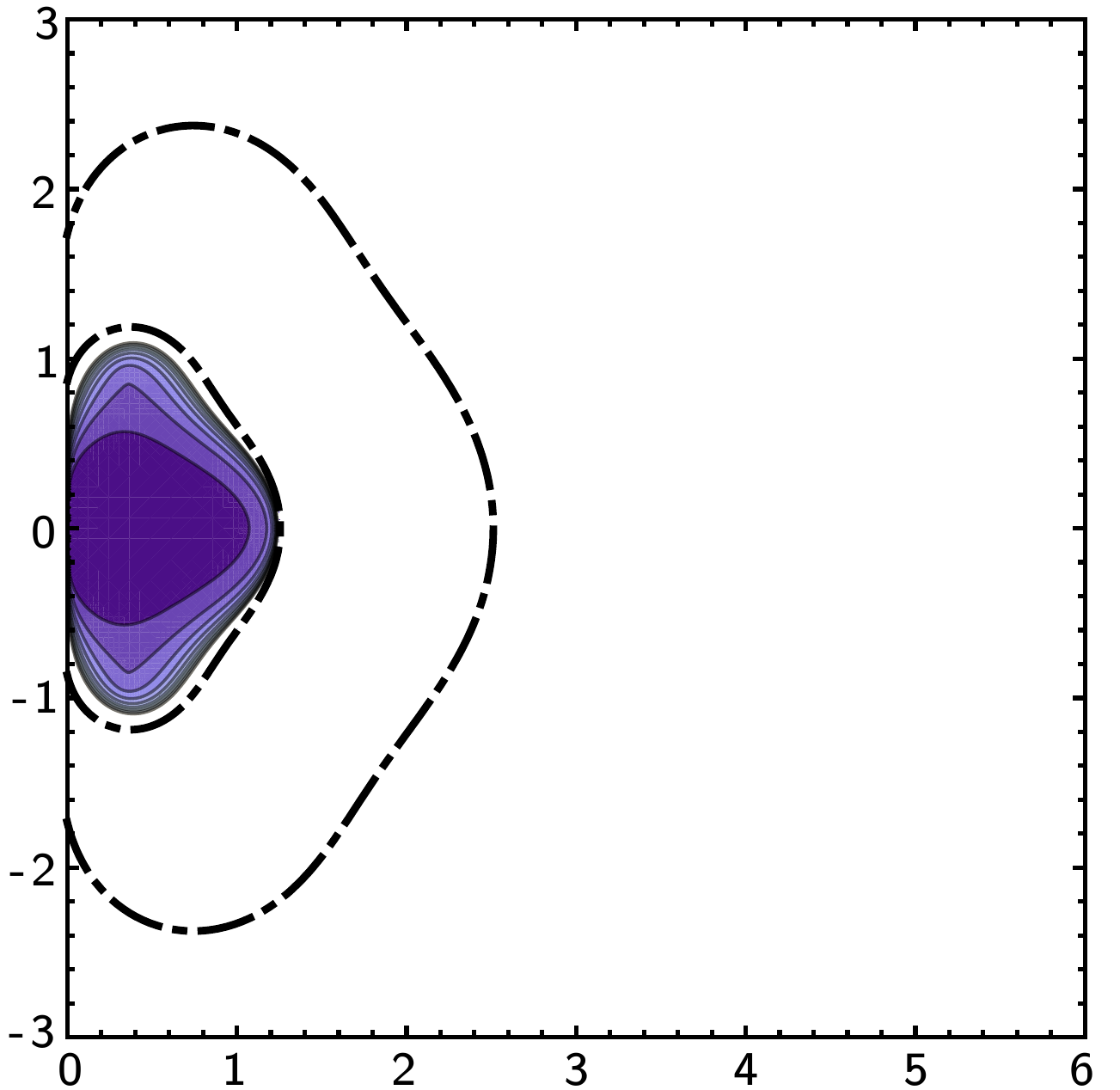}
            \caption{$\varphi_{FCF}$}
        \end{subfigure}
    \end{center}
    \caption{Eigenvalues and convergence bounds for ERK, $p=3$ and $k=2$.
    Dashed blue lines indicate sign changes.}
    \label{fig:erk}
\end{figure}

There are a few interesting things to note. First, FCF-relaxation expands the 
region of convergence in the complex plane dramatically for ESDIRK-33. However,
there are other schemes (not shown here, for brevity) where FCF-relaxation provides
little to no improvement. Also, note that the fine eigenvalue $\lambda^4$ changes
sign many times along the imaginary axis (in fact, the real part of $\lambda^k$
changes signs $2k$ times and the imaginary part $2k-1$). Such behavior is very
difficult to approximate with a coarse-grid time-stepping scheme, and provides
another way to think about why imaginary eigenvalues and hyperbolic PDEs can
be difficult for Parareal MGRIT. On a related note, using explicit time-stepping
schemes in Parareal/MGRIT is inherently limited by ensuring a stable time
step on the coarse grid, which makes naive application rare in numerical PDEs.
However, when stability is satisfied on the coarse grid, \Cref{fig:erk} (and similar
plots for other explicit schemes)
suggests that the domain of convergence pushes much closer against the imaginary
axis for explicit schemes than implicit. Such observations may be useful in
applying Parareal and MGRIT methods to systems of ODEs with
imaginary eigenvalues, but less stringent stability requirements,
wherein explicit integration may be a better choice than implicit.

\subsection{Test case: The wave equation} \label{sec:wave}

The previous section considered the effects of imaginary eigenvalues on convergence of MGRiT and
Parareal. Although true skew-symmetric operators are not particularly common, a similar character
can be observed in other discretizations. In particular, writing the 2nd-order wave equation in
first-order form and discretizing often leads to a spatial operator that is nearly skew-symmetric.
Two-level and multilevel convergence theory for MGRIT based on eigenvalues was demonstrated to provide
moderately accurate convergence estimates for small-scale discretizations of the second-order wave equation
in \cite{MGRIT19}. Here, we investigate the second-order wave equation further in the context of a finer
spatiotemporal discretization, examining why eigenvalues provide reliable information on convergence,
looking at the single-iteration bounds from \Cref{cor:new2grid}, and discussing the broader
implications.

The second-order wave equation in two spatial dimensions over domain $\Omega = (0, 2\pi) \times (0, 2\pi )$
is given by $\partial_{tt} u = c^2 \Delta u$ for $\mathbf{x} \in \Omega,  t \in (0, T]$
with scalar solution $u(\mathbf{x},t)$ and wave speed $c = \sqrt{10}$.
This can be written equivalently as a system of PDEs that are first-order in time,
\begin{align}
\begin{bmatrix} u \\ v\end{bmatrix}_t - \begin{bmatrix} {0} & I \\ c^2\Delta & 0 \end{bmatrix}\begin{bmatrix} u \\ v\end{bmatrix}
  = \begin{bmatrix} 0 \\ 0 \end{bmatrix}, \hspace{4ex}
    \text{for } \mathbf{x} \in \Omega,  t \in (0, T],
    \label{wave-system-eqn}
\end{align}
with initial and boundary conditions
\begin{alignat*}{4}
    u (\cdot, 0) &= \sin(x) \sin(y), \quad v (\cdot, 0) = 0,\qquad &&\text{for } \mathbf{x} \in \Omega \cup \partial \Omega, \\
    u (\mathbf{x}, \cdot) &= v (\mathbf{x}, \cdot) = 0, \qquad &&\text{for } \mathbf{x} \in \partial \Omega.
\end{alignat*}

\subsubsection{Why eigenvalues:}\label{wave:eig}

Although the operator that arises from writing the second-order wave equation as first-order
in time is not skew-adjoint, one can show that it
(usually) has purely imaginary eigenvalues. Moreover, although not unitarily diagonalizable, the
eigenvectors are only ill-conditioned in a specific, localized sense. As a result, the eigenvector
convergence bounds provide an accurate measure of convergence.

Let $\{\mathbf{w}_\ell, \zeta_\ell\}$ be an eigenpair of the discretization of $-c^2\Delta u = 0$
used in \eqref{wave-system-eqn}, for $\ell = 0,...,n-1$. For most standard
discretizations, we have $\zeta_\ell > 0$ $\forall \ell$ and the set $\{\mathbf{w}_\ell\}$ forms an orthonormal
basis of eigenvectors. Suppose this is the case. Expanding the block eigenvalue problem
$\mathbf{A}\mathbf{u}_j = \xi_j\mathbf{u}_j$ corresponding to \eqref{wave-system-eqn},
\begin{align*}
\begin{bmatrix} \mathbf{0} & I \\ c^2\Delta & \mathbf{0} \end{bmatrix} \begin{bmatrix}\mathbf{x}_j \\ \mathbf{v}_j \end{bmatrix}
  & = \xi_j \begin{bmatrix}\mathbf{x}_j \\ \mathbf{v}_j \end{bmatrix},
\end{align*}
yields a set of $2n$ eigenpairs, grouped in conjugate pairs of the corresponding purely imaginary
eigenvalues,
\begin{align*}
\{\mathbf{u}_{2\ell}, \xi_{2\ell}\} & := \left\{ \frac{1}{\sqrt{1+\zeta_\ell}}\begin{bmatrix} \mathbf{w}_{\ell} \\
  \mathrm{i}\sqrt{\zeta_\ell}\mathbf{w}_\ell\end{bmatrix}, \mathrm{i}\sqrt{\zeta_{\ell}}\right\}, \\
\{\mathbf{u}_{2\ell+1}, \xi_{2\ell+1}\} & := \left\{ \frac{1}{\sqrt{1+\zeta_\ell}}\begin{bmatrix} \mathbf{w}_{\ell} \\
  -\mathrm{i}\sqrt{\zeta_\ell}\mathbf{w}_\ell\end{bmatrix}, -\mathrm{i}\sqrt{\zeta_{\ell}}\right\},
\end{align*}
for $\ell=0,...,n-1$. Although the $(UU^*)^{-1}$-norm can be expressed in closed form, it is rather
complicated. Instead, we claim that eigenvalue bounds (theoretically tight in the $(UU^*)^{-1}$-norm)
provide a good estimate of $\ell^2$-convergence by considering the conditioning of eigenvectors.

Let $U$ denote a matrix with columns given by eigenvectors $\{\mathbf{u}_j\}$,
ordered as above for $\ell=0,...,n-1$. We can consider the conditioning of eigenvectors through the product
\begin{align}\label{eq:U*U}
U^*U = \begin{bmatrix} 1 & \frac{1 - \zeta_0}{1+\zeta_0} \\ \frac{1 - \zeta_0}{1+\zeta_0} & 1 \\ && \ddots \\
  &&& 1 & \frac{1 - \zeta_{n-1}}{1+\zeta_{n-1}} \\ &&& \frac{1 - \zeta_{n-1}}{1+\zeta_{n-1}} & 1 \end{bmatrix}.
\end{align}
Notice that $U^*U$ is a block-diagonal matrix with $2\times 2$ blocks corresponding to conjugate pairs
of eigenvalues. The eigenvalues of the $2\times 2$ block are given by $\{2\zeta_\ell/(1+\zeta_\ell), 2/(1+\zeta_\ell)\}$.
Although \eqref{eq:U*U} can be ill-conditioned for large $\zeta_{\ell} \sim 1/h^2$, for spatial mesh size $h$,
the ill-conditioning is only between conjugate pairs of eigenvalues, and eigenvectors are otherwise orthogonal.
Furthermore, the following Proposition proves that convergence bounds are symmetric across the real axis,
that is, the convergence bound for eigenvector with spatial eigenvalue $\xi$ is equivalent to that for its conjugate
$\bar{\xi}$. Together, these facts suggest the ill-conditioning between eigenvectors of conjugate pairs will
not significantly affect the accuracy of bounds, and that tight eigenvalue convergence bounds for MGRIT in the
$(UU^*)^{-1}$-norm provide accurate estimates of performance in practice.

\begin{proposition}\label{prop:symm}
Let $\Phi$ and $\Psi$ correspond to Runge-Kutta discretizations in time, as a function of a diagonalizable
spatial operator $\mathcal{L}$, with eigenvalues $\{\xi_i\}$. Then (tight) two-level convergence bounds of
MGRIT derived in \cite{southworth19} as a function $\delta t\xi$ are symmetric across the real axis.
\end{proposition}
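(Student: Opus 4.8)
The plan is to reduce the statement to the elementary fact that standard Runge--Kutta schemes have real Butcher coefficients, so that conjugating a spatial eigenvalue conjugates the associated temporal eigenvalues, while the bounds of \cite{southworth19} see only their moduli. First I would write $\Phi = R_\Phi(\delta t\,\mathcal L)$ and $\Psi = R_\Psi(k\,\delta t\,\mathcal L)$ (up to the sign convention of the problem), where $R_\Phi$ and $R_\Psi$ are the rational stability functions of the fine- and coarse-grid RK schemes; since these have real coefficients and $\delta t$, $k$ are real, the per-mode temporal eigenvalues $\lambda(\xi) := R_\Phi(\delta t\,\xi)$ and $\mu(\xi) := R_\Psi(k\,\delta t\,\xi)$ satisfy $\lambda(\bar\xi) = \overline{\lambda(\xi)}$ and $\mu(\bar\xi) = \overline{\mu(\xi)}$, and hence also $\lambda(\bar\xi)^k = \overline{\lambda(\xi)^k}$ and $\mu(\bar\xi) - \lambda(\bar\xi)^k = \overline{\mu(\xi) - \lambda(\xi)^k}$.

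Next I would simply inspect the form of the tight bounds. In the (simultaneously) diagonalizable setting of \cite{southworth19} --- that is, the bounds of \Cref{th:diag_tight} and \Cref{cor:new2grid} --- the worst-case convergence factor is a supremum over eigenmodes of an expression built only from $|\lambda(\xi_i)|$, $|\mu(\xi_i)|$, $|\mu(\xi_i) - \lambda(\xi_i)^k|$, together with the mesh parameters $N_c$ and $k$. By the previous paragraph, replacing $\xi_i$ by $\bar\xi_i$ replaces each of $\lambda$, $\mu$, $\mu - \lambda^k$ by its complex conjugate, which does not change its modulus, and leaves $N_c$, $k$ untouched; therefore the per-mode bound, and with it the supremum, is unchanged. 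This is exactly the claimed symmetry across the real axis. The same conclusion can alternatively be read off from \Cref{lem:complex} and \Cref{cor:symm}: conjugating $\xi_i$ conjugates the corresponding eigenvector of $\Psi$, i.e.\ flips the sign of its imaginary part, under which the field-of-values quantity $|\langle\Psi\mathbf v,\mathbf v\rangle|$ and the norms appearing in \eqref{eq:TAPmin} are invariant.

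I expect no substantive obstacle; the only care needed is bookkeeping. One must invoke the bounds in the form where the dependence on $\delta t\,\xi$ enters only through the moduli $|\lambda|$, $|\mu|$, $|\mu - \lambda^k|$ (as in \Cref{th:diag_tight} and \Cref{cor:new2grid}) rather than through $\lambda$, $\mu$ themselves, and one must note that the RK schemes under consideration --- in particular those used for the wave equation in \Cref{sec:wave} --- have real Butcher coefficients, so $R_\Phi$, $R_\Psi$ genuinely have real coefficients. With those two remarks the proof is immediate. If one wanted the statement beyond the diagonalizable case, it would have to be argued directly at the level of the TAP/field of values via \Cref{cor:symm}, but \Cref{prop:symm} as stated concerns only diagonalizable $\mathcal L$.
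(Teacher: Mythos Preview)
Your proposal is correct and follows essentially the same argument as the paper: the stability function of a Runge--Kutta scheme is a real-coefficient rational function, so $\lambda(\bar\xi)=\overline{\lambda(\xi)}$ and $\mu(\bar\xi)=\overline{\mu(\xi)}$, whence $|\lambda|$, $|\mu|$, and $|\mu-\lambda^k|$ are unchanged under $\xi\mapsto\bar\xi$, and the bounds of \Cref{th:diag_tight} depend only on these moduli. The additional remarks you make about \Cref{lem:complex} and \Cref{cor:symm} are not in the paper's proof but are consistent with it.
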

\begin{proof}
Recall from \cite{southworth19,19c_mgrit}, eigenvalues of $\Phi$ and $\Psi$ correspond to the Runge-Kutta
stability functions applied to $\delta t\xi$ and $k\delta t\xi$, respectively, for coarsening factor $k$. Also note
that the stability function of a Runge-Kutta method can be written as a rational function of two polynomials
with real coefficients, $P(z)/Q(z)$ \cite{JCButcher_2016}. As a result of the fundamental theorem of linear
algebra $P(\bar{z})/Q(\bar{z}) = \overline{P(z)/Q(z)}$. Thus for spatial eigenvalue $\xi$, $|\lambda(\xi)| =
|\lambda(\bar{\xi})|$, $|\mu(\xi)| = |\mu(\bar{\xi})|$, and $|\mu(\bar{\xi}) - \lambda(\bar{\xi})^k|
= |\overline{\mu(\xi) - \lambda(\xi)^k}| = |\mu(\xi) - \lambda(\xi)^k|$, which implies that convergence
bounds in \Cref{th:diag_tight} are symmetric across the real axis.
\end{proof}

\subsubsection{Observed convergence vs. bounds:}\label{wave:mgrit}

The first-order form~\eqref{wave-system-eqn} is implemented in MPI/C++ using second-order
finite differences in space and various L-stable SDIRK time integration schemes
(see~\cite[Section SM3]{MGRIT19}).
We consider 4096 points in the temporal domain and $41$~points in the spatial domain, with
$4096\delta_t = 40\delta_x = T = 2\pi$ and $4096\delta_t = 10 \cdot 40\delta_x = T = 20\pi$,
such that $\delta_t \approx 0.1 \delta_x / c^2$ and $\delta_t \approx \delta_x / c^2$, respectively.
Two-level MGRIT with FCF-relaxation is tested for temporal coarsening factors
$k \in \{ 2, 4, 8, 16, 32 \}$~\cite{XBraid}, with a random initial space-time guess,
an absolute convergence tolerance of $10^{-10}$, and a maximum of $200$ and $1000$
iterations for SDIRK and ERK schemes, respectively. 
\Cref{fig:wave0} reports the geometric average (``Ave CF'') and worst-case
(``Worst CF'') convergence factors for XBraid runs,
along with estimates for the ``Eval single it'' bound from \Cref{cor:new2grid}
by letting $N_c\to\infty$, the ``Eval bound'' as the eigenvalue form of the GSVD
upper bound in \Cref{th:conv}, and the upper/lower bound from \Cref{th:diag_tight}.

\setlength{\figurewidth}{0.22\linewidth}
\setlength{\figureheight}{0.22\linewidth}
\colorlet{evalSingleIt}{ForestGreen}
\colorlet{evalBound}{Orange}
\colorlet{worstCF}{Maroon}
\colorlet{aveCF}{Purple}
\colorlet{upperBound}{NavyBlue}
\colorlet{lowerBound}{Gray}
\begin{figure}[!ht]
    \centering
    \begin{center}
                                                                                                                                                                {
        \pgfplotsset{every tick label/.append style={font=\footnotesize}}
        \begin{subfigure}[c]{0.3\textwidth}
            \begin{tikzpicture}

\begin{axis}[width=\figurewidth,
height=0.849\figureheight,
at={(0\figurewidth,0\figureheight)},
scale only axis,
unbounded coords=jump,
xmin=2,
xmax=32,
xtick={2, 4, 8, 16, 32},
xlabel style={font=\color{white!15!black}},
xlabel={\footnotesize Coarsening factor $k$},
ymode=log,
ymin=1e-1,
ymax=1e1,
yminorticks=true,
ylabel style={font=\color{white!15!black}},
ylabel={\footnotesize Residual CF},
axis background/.style={fill=white}
]

\addplot [color=evalSingleIt, dashed, line width=1.5pt, mark size=2pt, mark=triangle, mark options={fill=none, solid, evalSingleIt}, forget plot]
table[]{2.           0.70709639
4.           1.49998775
8.           2.54125365
16.           4.24531537
32.           6.96423534
};

\addplot [color=evalBound, dotted, line width=1.5pt, mark size=3pt, mark=star, mark options={fill=none, solid, evalBound}, forget plot]
table[]{2.           0.49999412
4.           0.75002664
8.           0.91041159
16.           1.09163188
32.           1.27938329
};

\addplot [color=upperBound, dashed, line width=1.5pt, mark size=2pt, mark=square, mark options={fill=none, solid, upperBound}, forget plot]
table[]{2.           0.49653114
4.           0.74942625
8.           0.91033966
16.           1.09158655
32.           1.27931766
};

\addplot [color=worstCF, line width=1.5pt, mark size=2pt, mark=*, mark options={fill=none, solid, worstCF}, forget plot]
table[]{2.           0.48139481
4.           0.73684041
8.           0.89937646
16.           1.07953343
32.           1.26444759
};

\addplot [color=aveCF, line width=1.5pt, mark size=3pt, mark=+, mark options={fill=none, solid, aveCF}, forget plot]
table[]{2.           0.359026440536954816
4.           0.627054457499721996
8.           0.753942081982384882
16.           0.753563115637586378
32.           0.574260291336758710
};

\addplot [color=lowerBound, dotted, line width=1.5pt, mark size=2.5pt, mark=diamond, mark options={fill=none, solid, lowerBound}, forget plot]
table[]{2.           0.49362255
4.           0.74836374
8.           0.9099803
16.           1.09135994
32.           1.27898983
};

\addplot [color=black, line width=1pt, dotted, forget plot]
table[]{
2.0 1.0
32.0 1.0
};

\end{axis}
\end{tikzpicture}
            \caption{SDIRK1, $\delta t \approx 0.1 \tfrac{dx}{c^2}$}
            \label{fig:wave0:ratio0:sdirk1}
        \end{subfigure}\qquad
        \begin{subfigure}[c]{0.3\textwidth}
            \begin{tikzpicture}

\begin{axis}[width=\figurewidth,
height=0.849\figureheight,
at={(0\figurewidth,0\figureheight)},
scale only axis,
unbounded coords=jump,
xmin=2,
xmax=32,
xtick={2, 4, 8, 16, 32},
xlabel style={font=\color{white!15!black}},
xlabel={\footnotesize Coarsening factor $k$},
ymode=log,
ymin=1e-1,
ymax=1e3,
ytick={1e-1,1e0,1e1,1e2,1e3},
yminorticks=true,
ylabel style={font=\color{white!15!black}},
axis background/.style={fill=white}
]

\addplot [color=evalSingleIt, dashed, line width=1.5pt, mark size=2pt, mark=triangle, mark options={fill=none, solid, evalSingleIt}, forget plot]
table[]{2.          1699.6696797
4.          1502.31560855
8.          1115.46932368
16.           798.31563756
32.           566.63548537
};

\addplot [color=evalBound, dotted, line width=1.5pt, mark size=3pt, mark=star, mark options={fill=none, solid, evalBound}, forget plot]
table[]{2.          1201.84795629
4.           751.15780427
8.           394.37796149
16.           199.57890939
32.           100.16794854
};

\addplot [color=upperBound, dashed, line width=1.5pt, mark size=2pt, mark=square, mark options={fill=none, solid, upperBound}, forget plot]
table[]{2.           0.25696938
 4.           1.27534195
 8.           4.96963124
16.           8.31090726
32.           7.27724094
};

\addplot [color=worstCF, line width=1.5pt, mark size=2pt, mark=*, mark options={fill=none, solid, worstCF}, forget plot]
table[]{2.           0.10891253
 4.           0.66000251
 8.           1.93081128
16.           5.29397611
32.           5.97815656
};

\addplot [color=aveCF, line width=1.5pt, mark size=3pt, mark=+, mark options={fill=none, solid, aveCF}, forget plot]
table[]{2.           0.024696404490560171
 4.           0.127148066790694597
 8.           0.357469936097139507
16.           0.654723739130270710
32.           0.554909363617867357
};

\addplot [color=lowerBound, dotted, line width=1.5pt, mark size=2.5pt, mark=diamond, mark options={fill=none, solid, lowerBound}, forget plot]
table[]{2.           0.1049086
 4.           0.52105308
 8.           2.11353211
16.           5.96482091
32.           6.2611261
};

\addplot [color=black, line width=1pt, dotted, forget plot]
table[]{
2.0 1.0
32.0 1.0
};

\end{axis}
\end{tikzpicture}
            \caption{SDIRK2, $\delta t \approx 0.1 \tfrac{dx}{c^2}$}
            \label{fig:wave0:ratio0:sdirk2}
        \end{subfigure}\quad
        \begin{subfigure}[c]{0.3\textwidth}
            \begin{tikzpicture}

\begin{axis}[width=\figurewidth,
height=0.849\figureheight,
at={(0\figurewidth,0\figureheight)},
scale only axis,
unbounded coords=jump,
xmin=2,
xmax=32,
xtick={2, 4, 8, 16, 32},
xlabel style={font=\color{white!15!black}},
xlabel={\footnotesize Coarsening factor $k$},
ymode=log,
ymin=1e-2,
ymax=1e1,
yminorticks=true,
ylabel style={font=\color{white!15!black}},
axis background/.style={fill=white}
]

\addplot [color=evalSingleIt, dashed, line width=1.5pt, mark size=2pt, mark=triangle, mark options={fill=none, solid, evalSingleIt}, forget plot]
table[]{2.           1.2432952
4.           2.0101072
8.           3.0550558
16.           5.17788586
32.          10.61582828
};

\addplot [color=evalBound, dotted, line width=1.5pt, mark size=3pt, mark=star, mark options={fill=none, solid, evalBound}, forget plot]
table[]{2.           0.87914313
4.           1.00505586
8.           1.080131
16.           1.29448601
32.           1.87667461
};

\addplot [color=upperBound, dashed, line width=1.5pt, mark size=2pt, mark=square, mark options={fill=none, solid, upperBound}, forget plot]
table[]{2.           0.03330159
4.           0.28152296
8.           0.967959
16.           1.288241
32.           1.87542531
};

\addplot [color=worstCF, line width=1.5pt, mark size=2pt, mark=*, mark options={fill=none, solid, worstCF}, forget plot]
table[]{2.00000000e+00   1.49095894e-02
4.00000000e+00   1.40531098e-01
8.00000000e+00   6.46508585e-01
1.60000000e+01   1.16805832e+00
3.20000000e+01   1.80108748e+00
};

\addplot [color=aveCF, line width=1.5pt, mark size=3pt, mark=+, mark options={fill=none, solid, aveCF}, forget plot]
table[]{2.00000000e+00   0.001722260958591609
4.00000000e+00   0.026716755030029900
8.00000000e+00   0.178984336611682171
1.60000000e+01   0.488732433786846276
3.20000000e+01   0.571719786990243728
};

\addplot [color=lowerBound, dotted, line width=1.5pt, mark size=2.5pt, mark=diamond, mark options={fill=none, solid, lowerBound}, forget plot]
table[]{2.00000000e+00   1.36034530e-02
4.00000000e+00   1.18883450e-01
8.00000000e+00   6.87104441e-01
1.60000000e+01   1.25831740e+00
3.20000000e+01   1.86921603e+00
};

\addplot [color=black, line width=1pt, dotted, forget plot]
table[]{
2.0 1.0
32.0 1.0
};

\end{axis}
\end{tikzpicture}
            \caption{SDIRK3, $\delta t \approx 0.1 \tfrac{dx}{c^2}$}
            \label{fig:wave0:ratio0:sdirk3}
        \end{subfigure}
        }
        \\
        {
        \pgfplotsset{every tick label/.append style={font=\footnotesize}}
        \begin{subfigure}[c]{0.3\textwidth}
            \begin{tikzpicture}

\begin{axis}[width=\figurewidth,
height=0.849\figureheight,
at={(0\figurewidth,0\figureheight)},
scale only axis,
unbounded coords=jump,
xmin=2,
xmax=32,
xtick={2, 4, 8, 16, 32},
xlabel style={font=\color{white!15!black}},
xlabel={\footnotesize Coarsening factor $k$},
ymode=log,
ymin=1e-1,
ymax=1e1,
yminorticks=true,
ylabel style={font=\color{white!15!black}},
ylabel={\footnotesize Residual CF},
axis background/.style={fill=white}
]

\addplot [color=evalSingleIt, dashed, line width=1.5pt, mark size=2pt, mark=triangle, mark options={fill=none, solid, evalSingleIt}, forget plot]
table[]{2.           0.70606785
4.           1.49873192
8.           2.55115271
16.           4.26035129
32.           6.96407128
};

\addplot [color=evalBound, dotted, line width=1.5pt, mark size=3pt, mark=star, mark options={fill=none, solid, evalBound}, forget plot]
table[]{2.           0.49941203
4.           0.75002645
8.           0.92783232
16.           1.10898104
32.           1.27877275
};

\addplot [color=upperBound, dashed, line width=1.5pt, mark size=2pt, mark=square, mark options={fill=none, solid, upperBound}, forget plot]
table[]{2.           0.49652995
4.           0.74941606
8.           0.92781523
16.           1.10895333
32.           1.278711
};

\addplot [color=worstCF, line width=1.5pt, mark size=2pt, mark=*, mark options={fill=none, solid, worstCF}, forget plot]
table[]{2.           0.48098566
4.           0.74091865
8.           0.92193548
16.           1.10171575
32.           1.26760118
};

\addplot [color=aveCF, line width=1.5pt, mark size=3pt, mark=+, mark options={fill=none, solid, aveCF}, forget plot]
table[]{2.           0.339943495893510028
4.           0.639118591806961955
8.           0.841246128452684205
16.           0.773609407584889497
32.           0.612154420309807157
};

\addplot [color=lowerBound, dotted, line width=1.5pt, mark size=2.5pt, mark=diamond, mark options={fill=none, solid, lowerBound}, forget plot]
table[]{2.           0.49362265
4.           0.74836379
8.           0.92772983
16.           1.1088148
32.           1.2784024
};

\addplot [color=black, line width=1pt, dotted, forget plot]
table[]{
2.0 1.0
32.0 1.0
};

\end{axis}
\end{tikzpicture}
            \caption{SDIRK1, $\delta t \approx \tfrac{dx}{c^2}$}
            \label{fig:wave0:ratio1:sdirk1}
        \end{subfigure}\qquad
        \begin{subfigure}[c]{0.3\textwidth}
            \begin{tikzpicture}

\begin{axis}[width=\figurewidth,
height=0.849\figureheight,
at={(0\figurewidth,0\figureheight)},
scale only axis,
unbounded coords=jump,
xmin=2,
xmax=32,
xtick={2, 4, 8, 16, 32},
xlabel style={font=\color{white!15!black}},
xlabel={\footnotesize Coarsening factor $k$},
ymode=log,
ymin=1e-1,
ymax=1e3,
ytick={1e-1,1e0,1e1,1e2,1e3},
yminorticks=true,
ylabel style={font=\color{white!15!black}},
axis background/.style={fill=white}
]

\addplot [color=evalSingleIt, dashed, line width=1.5pt, mark size=2pt, mark=triangle, mark options={fill=none, solid, evalSingleIt}, forget plot]
table[]{2.          170.02453013
4.          150.48136134
8.          112.3179315
16.           82.04816321
32.           62.84980723
};

\addplot [color=evalBound, dotted, line width=1.5pt, mark size=3pt, mark=star, mark options={fill=none, solid, evalBound}, forget plot]
table[]{2.          120.22549855
4.           75.24068139
8.           39.71038626
16.           20.51204161
32.           11.11038211
};

\addplot [color=upperBound, dashed, line width=1.5pt, mark size=2pt, mark=square, mark options={fill=none, solid, upperBound}, forget plot]
table[]{2.           9.85726955
4.          10.55481098
8.           9.51259576
16.           8.30629624
32.           7.27606438
};

\addplot [color=worstCF, line width=1.5pt, mark size=2pt, mark=*, mark options={fill=none, solid, worstCF}, forget plot]
table[]{2.           6.68257204
4.           6.63257905
8.           6.05050882
16.           5.69621965
32.           5.74886084
};

\addplot [color=aveCF, line width=1.5pt, mark size=3pt, mark=+, mark options={fill=none, solid, aveCF}, forget plot]
table[]{2.           2.289113537394567466
4.           4.364167422868654000
8.           3.333342441038491089
16.           1.483771054864120309
32.           1.102268632958060435
};

\addplot [color=lowerBound, dotted, line width=1.5pt, mark size=2.5pt, mark=diamond, mark options={fill=none, solid, lowerBound}, forget plot]
table[]{2.           7.95526033
4.           8.62195365
8.           7.86181722
16.           6.96266678
32.           6.26011696
};

\addplot [color=black, line width=1pt, dotted, forget plot]
table[]{
2.0 1.0
32.0 1.0
};

\end{axis}
\end{tikzpicture}            \caption{SDIRK2, $\delta t \approx \tfrac{dx}{c^2}$}
            \label{fig:wave0:ratio1:sdirk2}
        \end{subfigure}\quad
        \begin{subfigure}[c]{0.3\textwidth}
            \begin{tikzpicture}

\begin{axis}[width=\figurewidth,
height=0.849\figureheight,
at={(0\figurewidth,0\figureheight)},
scale only axis,
unbounded coords=jump,
xmin=2,
xmax=32,
xtick={2, 4, 8, 16, 32},
xlabel style={font=\color{white!15!black}},
xlabel={\footnotesize Coarsening factor $k$},
ymode=log,
ymin=1e-1,
ymax=1e2,
ytick={1e-1,1e0,1e1,1e2},
yminorticks=true,
ylabel style={font=\color{white!15!black}},
axis background/.style={fill=white}
]

\addplot [color=evalSingleIt, dashed, line width=1.5pt, mark size=2pt, mark=triangle, mark options={fill=none, solid, evalSingleIt}, forget plot]
table[]{2.           1.60502419
4.           3.78667791
8.           6.71584194
16.          10.07148374
32.          14.38300871
};

\addplot [color=evalBound, dotted, line width=1.5pt, mark size=3pt, mark=star, mark options={fill=none, solid, evalBound}, forget plot]
table[]{2.           1.14123404
4.           1.92497802
8.           2.40252975
16.           2.52369263
32.           2.5434107
};

\addplot [color=upperBound, dashed, line width=1.5pt, mark size=2pt, mark=square, mark options={fill=none, solid, upperBound}, forget plot]
table[]{2.           1.14120879
4.           1.92496878
8.           2.40251357
16.           2.5236341
32.           2.54318365
};

\addplot [color=worstCF, line width=1.5pt, mark size=2pt, mark=*, mark options={fill=none, solid, worstCF}, forget plot]
table[]{2.           1.13524499
4.           1.91811779
8.           2.58152877
16.           2.65427686
32.           2.98000011
};

\addplot [color=aveCF, line width=1.5pt, mark size=3pt, mark=+, mark options={fill=none, solid, aveCF}, forget plot]
table[]{2.           1.050400374630146461
4.           1.836431081221350459
8.           2.196678964058615957
16.           1.169494560830936614
32.           1.013159840235413833
};

\addplot [color=lowerBound, dotted, line width=1.5pt, mark size=2.5pt, mark=diamond, mark options={fill=none, solid, lowerBound}, forget plot]
table[]{2.           1.14108253
4.           1.92492259
8.           2.4024327
16.           2.52334152
32.           2.54204936
};

\addplot [color=black, line width=1pt, dotted, forget plot]
table[]{
2.0 1.0
32.0 1.0
};

\end{axis}
\end{tikzpicture}
            \caption{SDIRK3, $\delta t \approx \tfrac{dx}{c^2}$}
            \label{fig:wave0:ratio1:sdirk3}
        \end{subfigure}
        }
        \begin{tikzpicture}

\begin{axis}[width=\figurewidth,
height=0.849\figureheight,
at={(0\figurewidth,0\figureheight)},
hide axis,
xmin=2,
xmax=6,
xtick={2, 3, 4, 5, 6},
xlabel style={font=\color{white!15!black}},
xlabel={Number of levels},
ymode=log,
ymin=1e-07,
ymax=10,
yminorticks=true,
ylabel style={font=\color{white!15!black}},
ylabel={Convergence factor},
axis background/.style={fill=white},
legend style={at={(0.6,0)}, anchor=south, legend columns=3,
draw=none, fill=none, legend cell align=left}
]

\addlegendimage{color=upperBound, dashed, line width=1.5pt, mark size=2pt, mark=square, mark options={fill=none, solid, upperBound}}
\addlegendimage{color=evalBound, dotted, line width=1.5pt, mark size=3pt, mark=star, mark options={solid, evalBound}}
\addlegendimage{color=evalSingleIt, dashed, line width=1.5pt, mark size=2pt, mark=triangle, mark options={fill=none, solid, evalSingleIt}}
\addlegendimage{color=worstCF, line width=1.5pt, mark size=2pt, mark=*, mark options={solid, worstCF}}
\addlegendimage{color=aveCF, line width=1.5pt, mark size=3pt, mark=+, mark options={solid, aveCF}}
\addlegendimage{color=lowerBound, dotted, line width=1.5pt, mark size=2.5pt, mark=diamond, mark options={fill=none, solid, lowerBound}}

\addlegendentry{~Upper bound}
\addlegendentry{~Eval bound\quad~}
\addlegendentry{~Eval single it\quad~}
\addlegendentry{~Worst CF}
\addlegendentry{~Ave CF}
\addlegendentry{~Lower bound}

\end{axis}
\end{tikzpicture}    \end{center}
    \caption{Eigenvalue convergence bounds and upper/lower bounds from~\cite[Equation~(63)]{southworth19}
        compared to observed worst-case and average convergence factors.}
    \label{fig:wave0}
\end{figure}
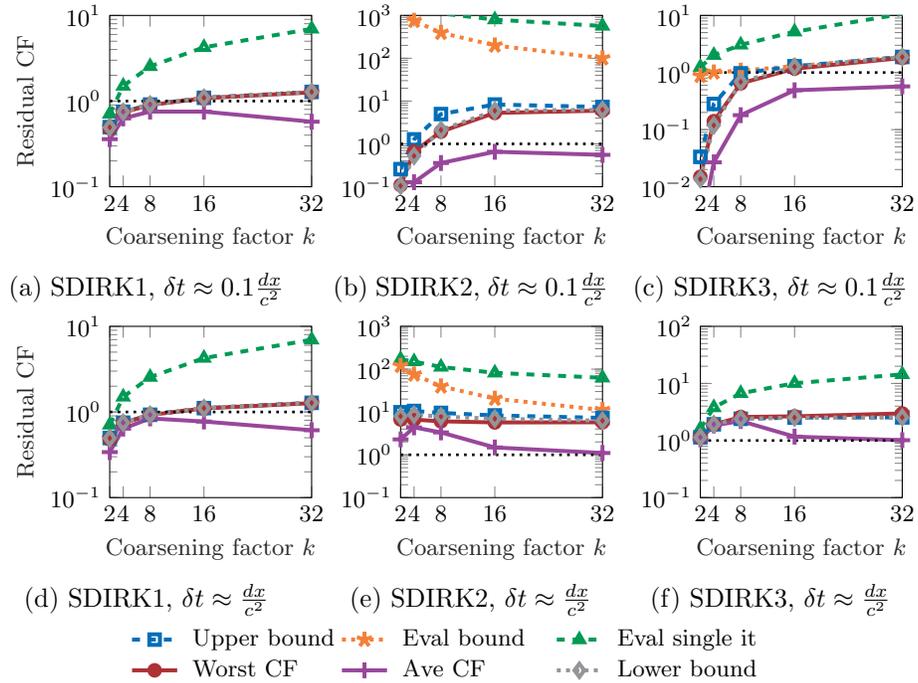

It turns out for this particular problem and discretization, convergence of
Parareal/MGRIT requires an almost explicit restriction on time-step size. To that
end, it is not likely to be competitive in practice vs. sequential time stepping.
Nevertheless, our eigenvalue-based convergence theory provides a very accurate 
measure of convergence. In all cases, the theoretical lower and upper bound from
\Cref{th:diag_tight} perfectly contain the worst observed convergence factor, with
only a very small difference between the theoretical upper and lower bounds.

There are a number of other interesting things to note from \Cref{fig:wave0}.
First, both the general eigenvalue bound (``Eval bound'') and single iteration
bound from \Cref{cor:new2grid} (''Eval single it'') are very pessimistic,
sometimes many orders of magnitude larger than the worst observed convergence
factor. As discussed previously and first demonstrated in \cite{19c_mgrit},
the size of the coarse problem, $N_c$, is particularly relevant for problems
with imaginary eigenvalues. Although the lower and upper bound converge to
the ``Eval bound'' as $N_c\to\infty$, for small to moderate $N_c$, reasonable
convergence may be feasible in practice even if the limiting bound $\gg 1$. 
Due to the relevance of $N_c$, it is difficult to comment on the single
iteration bound (because as derived, we let $N_c\to\infty$ for an upper
single-iteration bound). However, we note that the upper bound on all other
iterations (``Upper bound'') appears to bound the worst-observed convergence
factor, so the single-iteration bound on worst-case convergence appears
not to be realized in practice, at least for this problem. 
It is also worth pointing out the difference between time-integration schemes
in terms of convergence,
with backward Euler being the most robust, followed by SDIRK3, and last
SDIRK2. As discussed in \cite{19c_mgrit} for normal spatial operators, 
not all time-integration schemes are equal when it comes to convergence
of Parareal and MGRIT, and robust convergence theory provides important
information on choosing effective integration schemes.

\begin{remark}[Superlinear convergence]
In \cite{Gander:2007jq}, superlinear convergence of the Pareal algorithm is
observed and discussed. We see similar behavior in \Cref{fig:wave0}, where
the worst observed convergence factor can be more than $10\times$ larger
than the average convergence factor. In general, superlinear convergence 
would be expected as the exactness property of Parareal/MGRIT is approached.
In addition, for non-normal operators, although the slowest converging
mode may be observed in practice during some iteration, it does not
necessarily stay in the error spectrum (and thus continue to yield
slow(est) convergence) as iterations progress, due to the inherent 
rotation in powers of non-normal operators. The nonlinear setting
introduces additional complications that may explain such behavior.
\end{remark}

\subsection{Test case: DG advection (diffusion)} \label{sec:adv}

Now we consider the time-dependent advection-diffusion equation,
\begin{align}
\frac{\partial u}{\partial t} + \mathbf{v} \cdot \nabla u - \nabla\cdot(\alpha \nabla u)  = f,
\end{align}
on a 2D unit square domain discretized with linear discontinuous Galerkin elements.
The discrete spatial operator, $L$, for this problem is defined by $L = M^{-1}K$,
where $M$ is a mass matrix, and $K$ is the stiffness matrix associated with
$\mathbf{v}\cdot \nabla u - \nabla\cdot(\alpha \nabla u)$. The length of the time domain is
always chosen to maintain the appropriate relationship between the spatial and temporal
step sizes while also keeping 100 time points on the MGRIT coarse grid (for a variety
of coarsening factors). Throughout, the initial condition is $u(\mathbf{x},0) = \mathbf{0}$
and the following time-dependent forcing function is used:
\begin{align}
f(\mathbf{x},t) = \begin{cases}
\cos^2( 2\pi  t / t_{final} )\,, & \mathbf{x}\in[1/8,3/8]\times[1/8,3/8] \\
0\, , & \text{else}.
\end{cases}
\end{align}
Backward Euler and a third-order, three-stage L-stable SDIRK scheme, denoted SDIRK3
are applied in time, and FCF-relaxation is used for all tests. Three different
velocity fields are studied:
\begin{align}
\mathbf{v}_1(\mathbf{x},t) = (\sqrt{2/3}, \sqrt{1/3}), \\
\mathbf{v}_2(\mathbf{x},t) = (\cos(\pi y)^2,\cos(\pi x)^2), \\
\mathbf{v}_3(\mathbf{x},t) = (y\pi/2, -x\pi/2),
\end{align}
referred to as problems 1, 2, and 3 below. Note that problem 1 is a simple translation,
problem 2 is a curved but non-recirculating flow, and problem 3 is a recirculating flow.
The relative strength of the diffusion term is also varied in the results below, including
a diffusion-dominated case, $\alpha = 10d_x$, an advection-dominated case, $\alpha = 0.1d_x$,
and a pure advection case, $\alpha = 0$.
{When backward Euler is used, the time step is set equal to the spatial step, $d_t = d_x$, while for SDIRK3, $d_t^3 = d_x$, in order to obtain similar accuracy in both time and space.}

\Cref{fig:adv_diff_cf} shows various computed bounds compared with observed worst-case and average convergence factors (over 10 iterations) vs. MGRIT coarsening factor for each problem variation with different amounts of diffusion using backward Euler for time integration.
The bounds shown are the ``GSVD bound'' from \Cref{th:conv}, the ``Eval bound,'' an equivalent eigenvalue form of this bound (see \cite[Theorem 13]{southworth19}, and ``Eval single it,'' which is the bound from \Cref{cor:new2grid} as $N_c\to\infty$.
The problem size is $n = 16$, where the spatial mesh has $n\times n$ elements.
This small problem size allows the bounds to be directly calculated: for the GSVD bound, it is possible to compute $||(\Psi - \Phi^k)(I - e^{\mathrm{i}x}\Psi)^{-1}||$, and for the eigenvalue bounds, it is possible to compute the complete eigenvalue decomposition of the spatial operator, $L$, and apply proper transformations to the eigenvalues to obtain eigenvalues of $\Phi$ and $\Psi$.

\begin{figure}[htb!]
\centering
\includegraphics[height=0.21\textwidth]{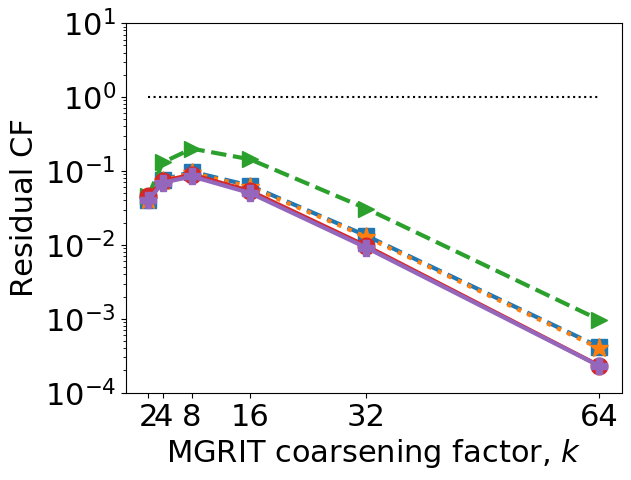}
\includegraphics[height=0.21\textwidth]{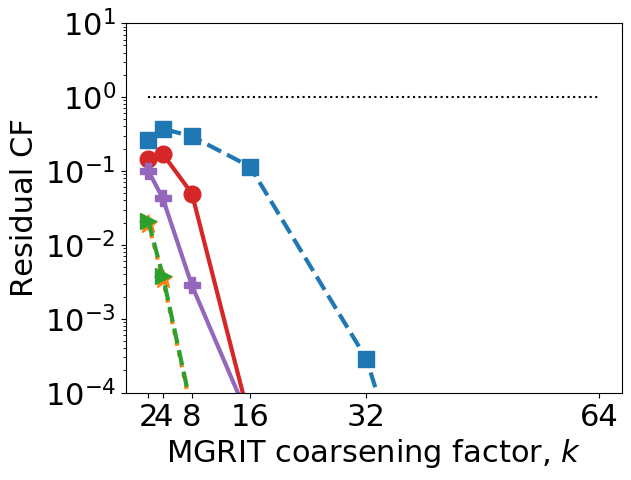}
\includegraphics[height=0.21\textwidth]{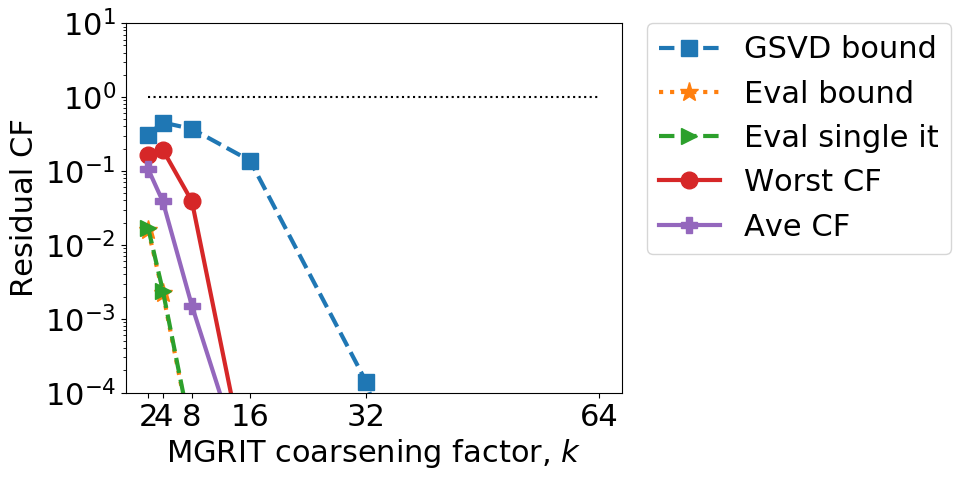}

\includegraphics[height=0.21\textwidth]{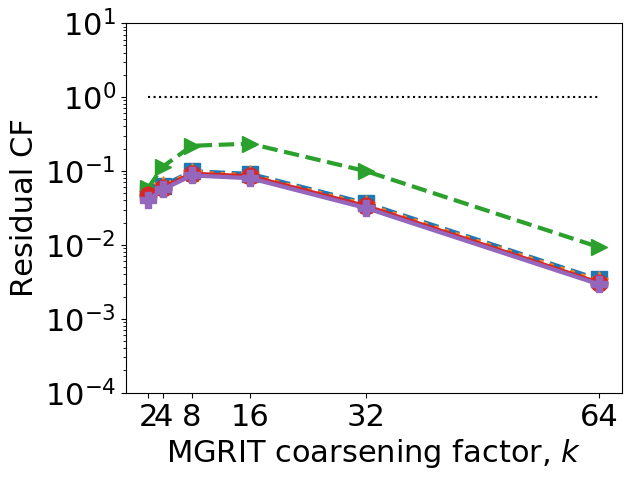}
\includegraphics[height=0.21\textwidth]{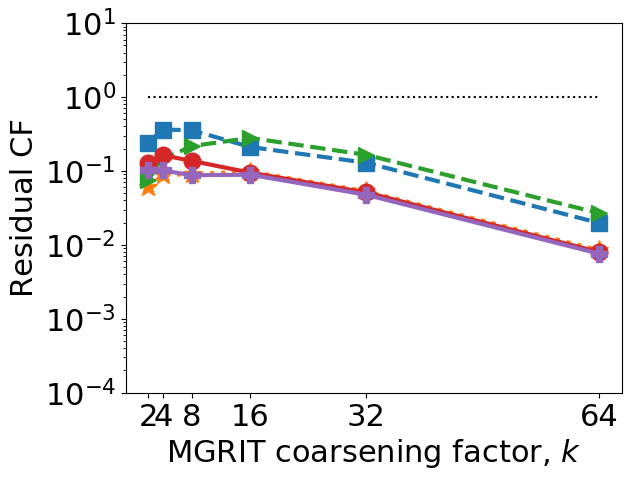}
\includegraphics[height=0.21\textwidth]{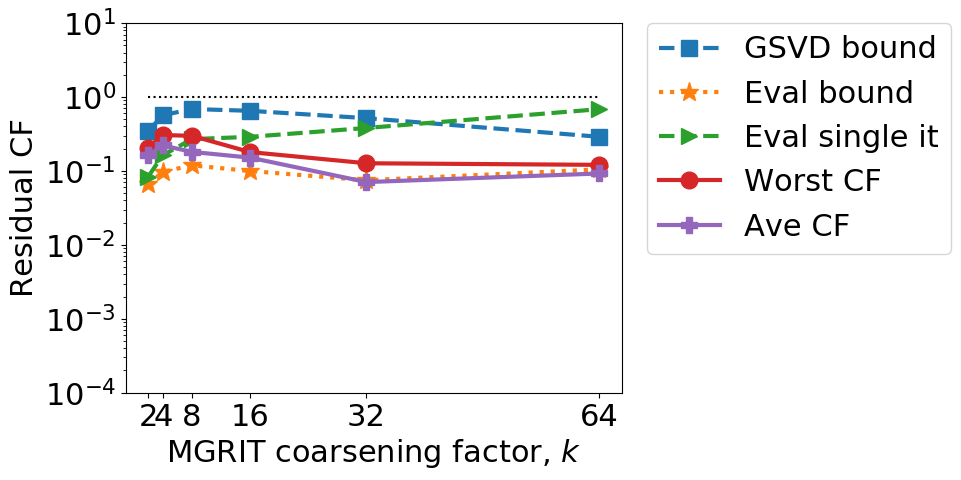}

\includegraphics[height=0.21\textwidth]{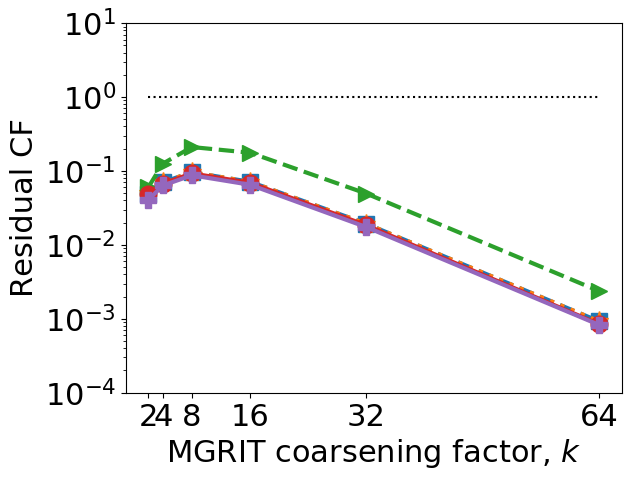}
\includegraphics[height=0.21\textwidth]{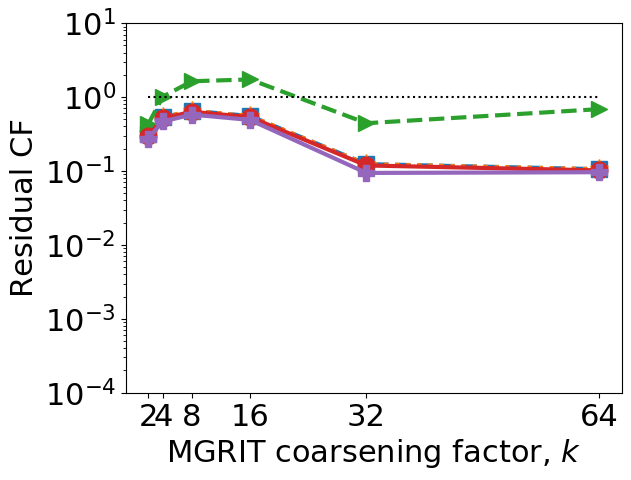}
\includegraphics[height=0.21\textwidth]{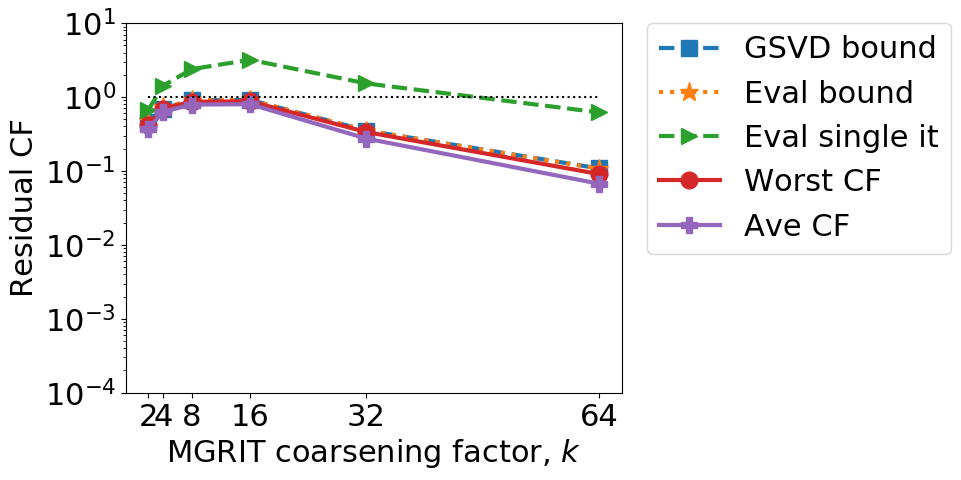}
\caption{Bounds and observed convergence factor vs. MGRIT coarsening factor with $n = 16$ using backward Euler for problem 1 (top), 2 (middle), and 3 (bottom) with $\alpha = 10d_x$ (left), $0.1d_x$ (middle), and 0 (right). }
\label{fig:adv_diff_cf}
\end{figure}

In the diffusion-dominated case (left column of \Cref{fig:adv_diff_cf}), the GSVD and eigenvalue bounds agree well with each other (because the spatial operator is nearly symmetric), accurately predicting observed residual convergence factors for all problems. Similar to \Cref{sec:wave}, the single iteration bound from \Cref{cor:new2grid} does not appear to be realized in practice.

In the advection-dominated and pure advection cases (middle and right columns of \Cref{fig:adv_diff_cf}), behavior of the bounds and observed convergence depends on the type of flow. In the non-recirculating examples, the GSVD bounds are more pessimistic compared to observed convergence, but still provide an upper bound on worst-case convergence, as expected. Conversely, the eigenvalue bounds on worst-case convergence become unreliable, sometimes undershooting the observed convergence factors by significant amounts.
Recall that the eigenvalue bounds are tight in the $(UU^*)^{-1}$-norm of the error, where $U$ is the matrix of eigenvectors. However, for the non-recirculating problems, the spatial operator $L$ is defective to machine precision, that is, the eigenvectors are nearly linearly dependent and $U$ is close to singular. Thus, tight bounds on convergence in the $(UU^*)^{-1}$-norm are unlikely to provide an accurate measure of convergence in more standard norms, such as $\ell^2$.
In the recirculating case, $UU^*$ is well conditioned. Then, similarly to the wave equation in \Cref{sec:wave}, the eigenvalue bounds should provide a good approximation to the $\ell^2$-norm, and, indeed, the GSVD and eigenvalue bounds agree well with each other accurately predict residual convergence factors.

\Cref{fig:adv_diff_cf_sdirk3} shows the same set of results but using SDIRK3 for time integration instead of backward Euler and with a large time step $d_t^3 = d_x$ to match accuracy in the temporal and spatial domains.
Results here are qualitatively similar to those of backward Euler, although MGRIT convergence (both predicted and measured) is generally much swifter, especially for larger coarsening factors.
Again, the GSVD and eigenvalue bounds accurately predict observed convergence in the diffusion-dominated case.
In the advection-dominated and pure advection cases, again the eigenvalue bounds are not reliable for the non-recirculating flows, but all bounds otherwise accurately predict the observed convergence.

\begin{figure}[htb!]
\centering
\includegraphics[height=0.21\textwidth]{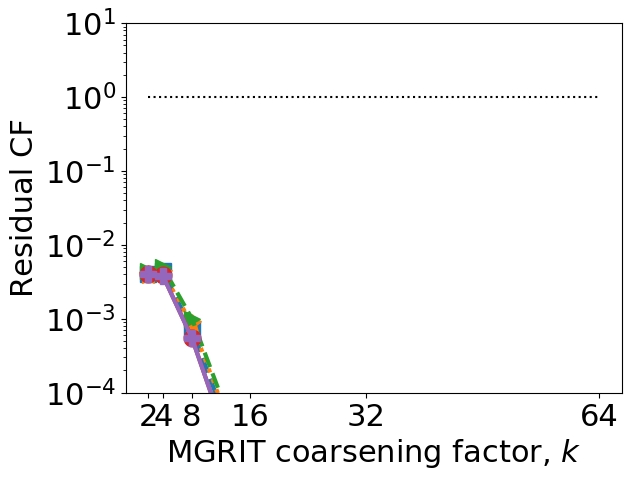}
\includegraphics[height=0.21\textwidth]{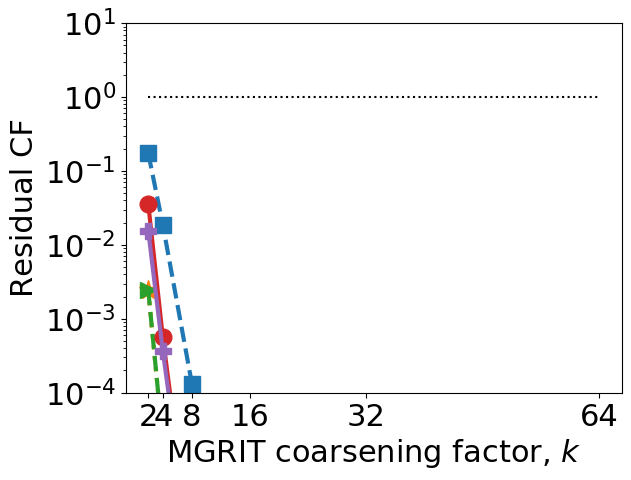}
\includegraphics[height=0.21\textwidth]{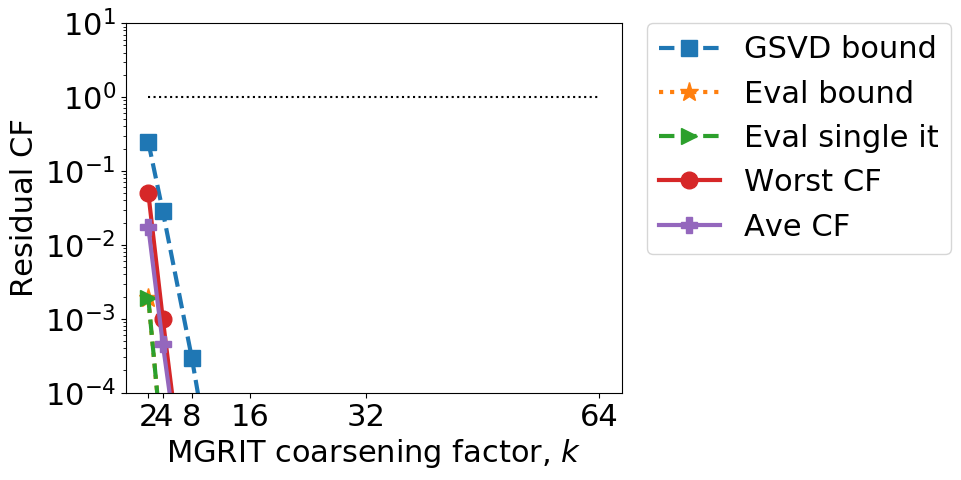}

\includegraphics[height=0.21\textwidth]{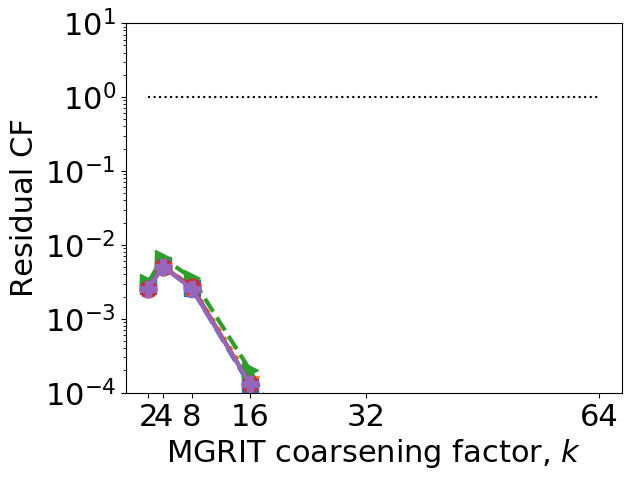}
\includegraphics[height=0.21\textwidth]{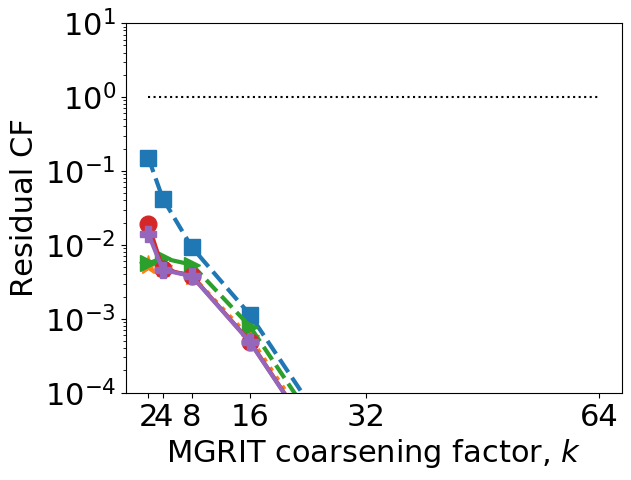}
\includegraphics[height=0.21\textwidth]{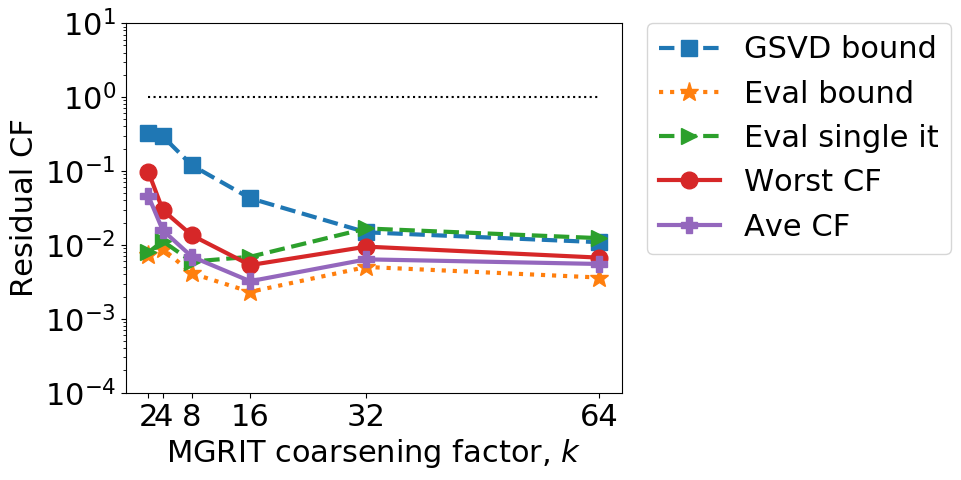}

\includegraphics[height=0.21\textwidth]{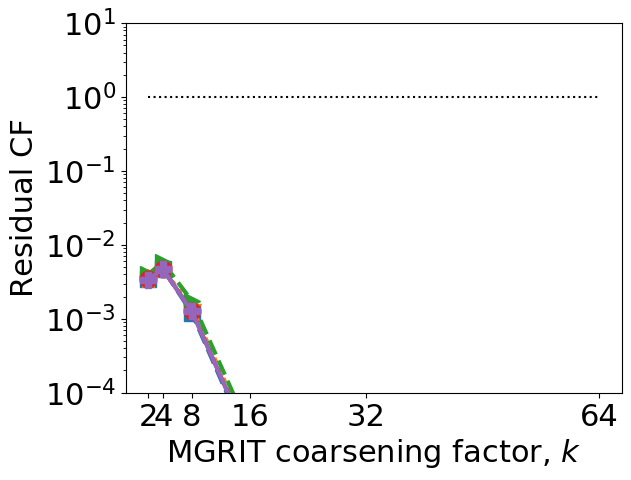}
\includegraphics[height=0.21\textwidth]{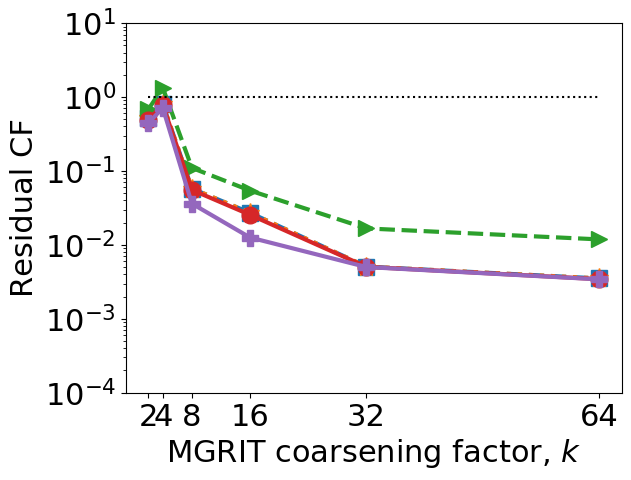}
\includegraphics[height=0.21\textwidth]{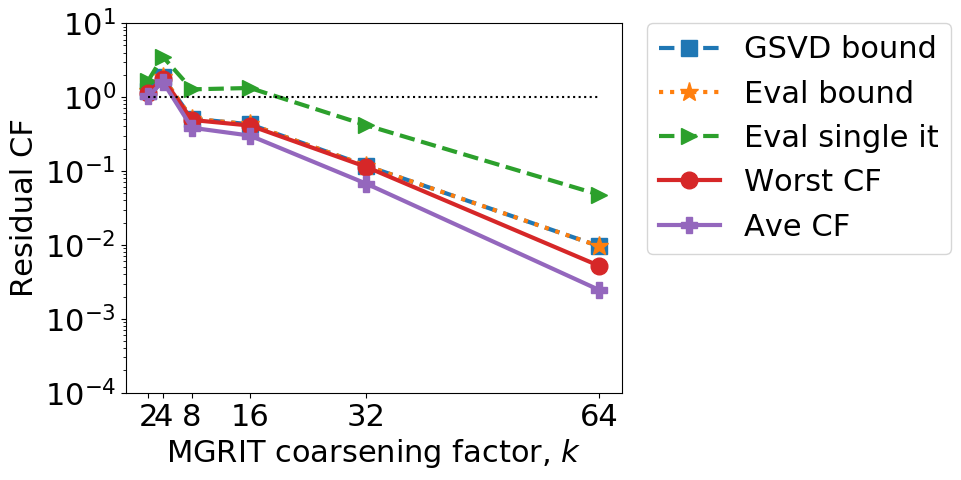}
\caption{Bounds and observed convergence factor vs. MGRIT coarsening factor with $n = 16$ using SDIRK3 {with $d_t^3 = d_x$} for problem 1 (top), 2 (middle), and 3 (bottom) with $\alpha = 10d_x$ (left), $0.1d_x$ (middle), and 0 (right). }
\label{fig:adv_diff_cf_sdirk3}
\end{figure}

\Cref{fig:adv_diff_large} shows results for a somewhat larger problem, with spatial
mesh of $512\times 512$ elements, to compare convergence estimates computed directly
on a small problem size with observed convergence on more practical problem sizes.
The length of the time domain is set according to the MGRIT coarsening factor such that there are always four levels in the MGRIT hierarchy (or again 100 time points on the coarse grid in the
case of two-level) while preserving the previously stated time step to spatial step
relationships. Although less tight than above, convergence estimates on the
small problem size provide reasonable estimates of the larger problem in many
cases, particularly for smaller coarsening factors. The difference in larger
coarsening factors is likely because, e.g., $\Phi^{16}$ for a $16\times 16$ mesh
globally couples elements, while $\Phi^{16}$ for a $512\times 512$ mesh remains a
fairly sparse matrix. That is, the mode decomposition of $\Phi^{16}$ for $n=16$
is likely a poor representation for $n=512$.

\begin{figure}[htb!]
\centering
\includegraphics[height=0.21\textwidth]{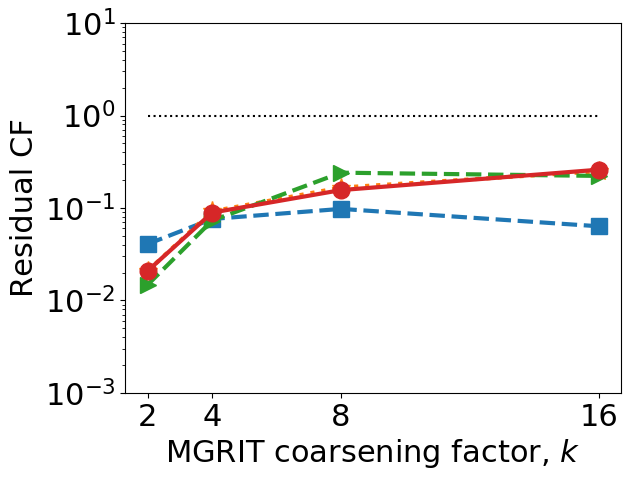}
\includegraphics[height=0.21\textwidth]{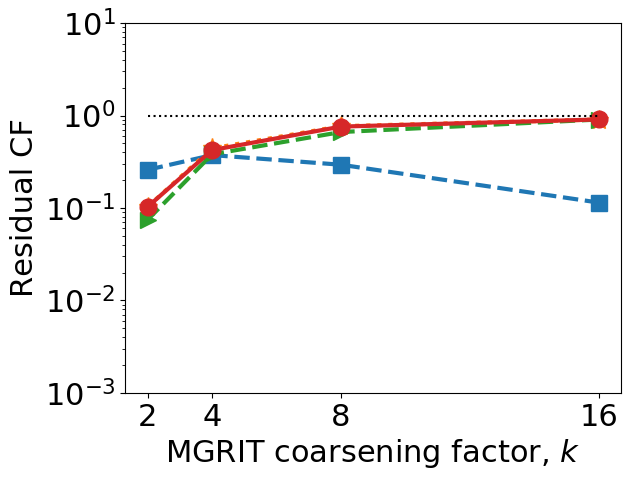}
\includegraphics[height=0.21\textwidth]{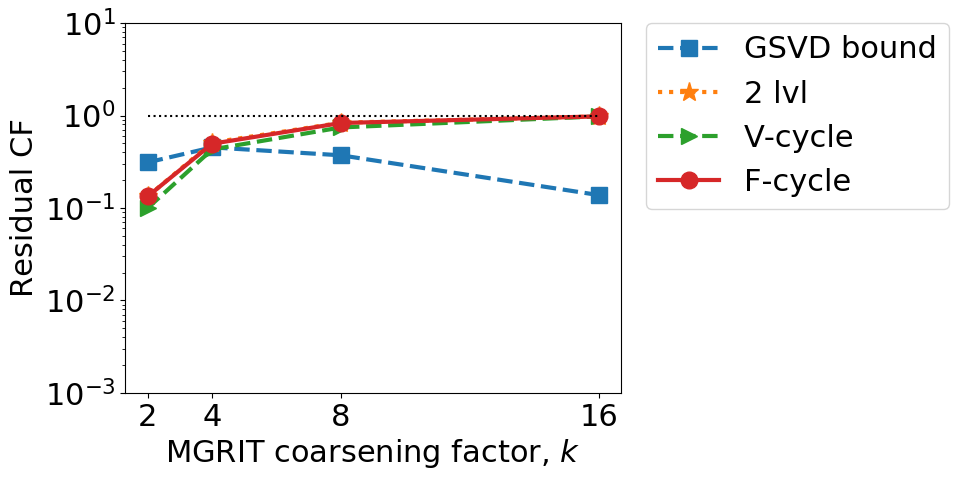}

\includegraphics[height=0.21\textwidth]{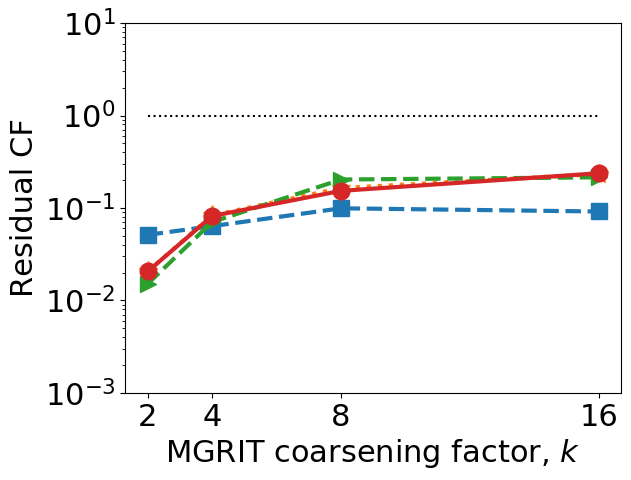}
\includegraphics[height=0.21\textwidth]{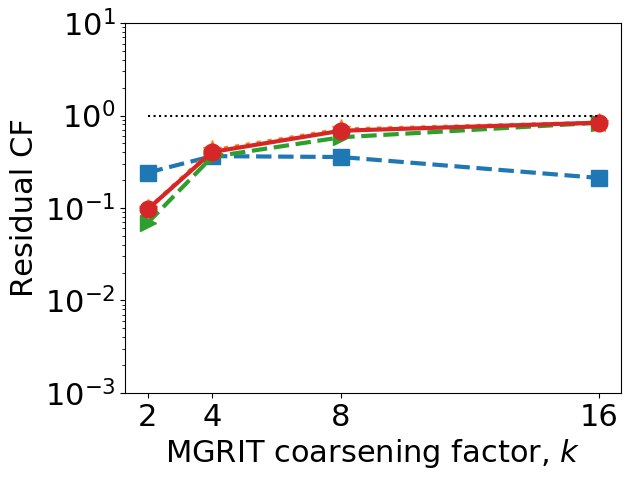}
\includegraphics[height=0.21\textwidth]{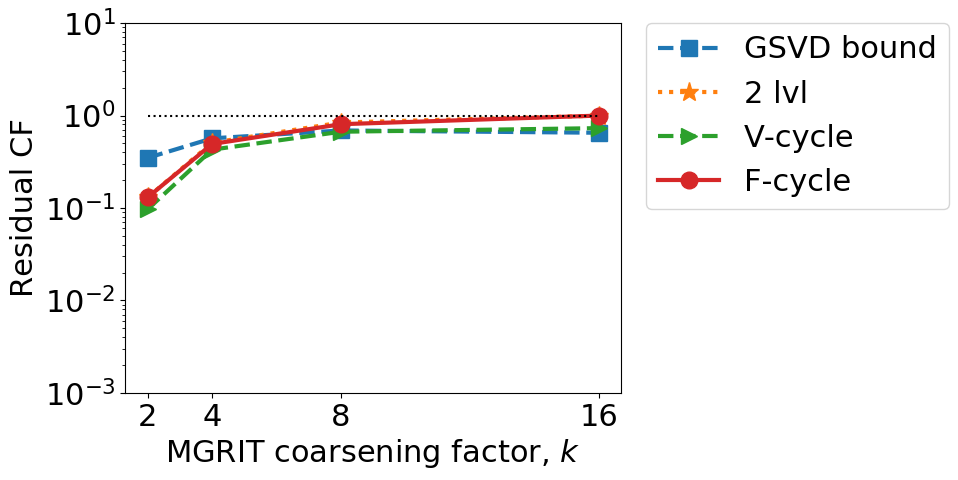}

\includegraphics[height=0.21\textwidth]{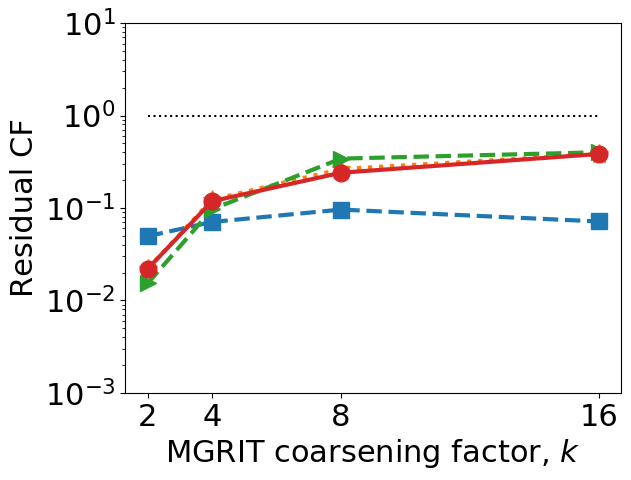}
\includegraphics[height=0.21\textwidth]{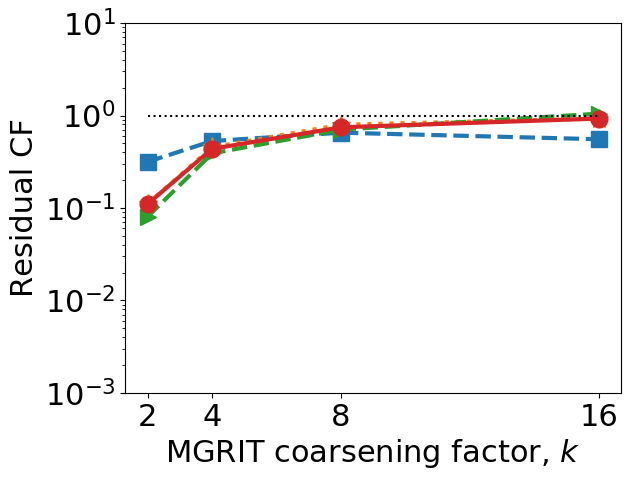}
\includegraphics[height=0.21\textwidth]{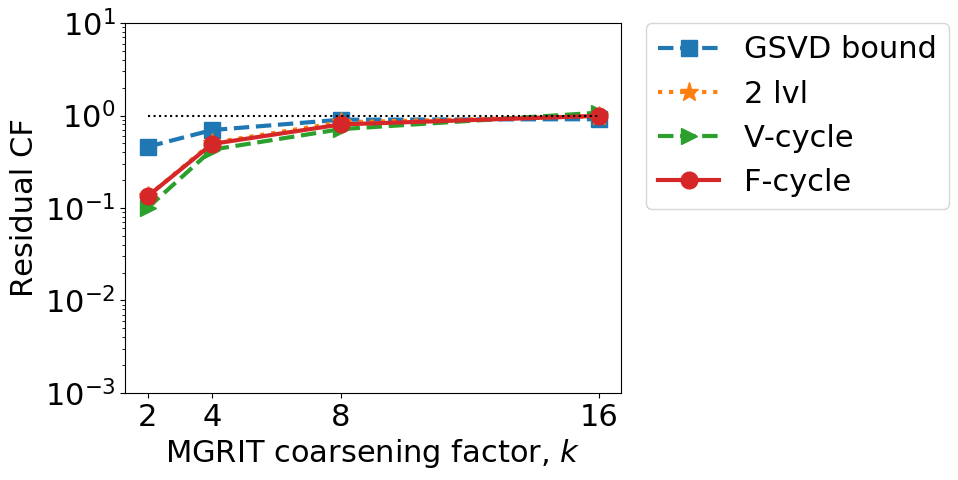}
\caption{GSVD bound for $n = 16$ vs. observed convergence factors for different cycle structures with $n = 512$ plotted against MGRIT coarsening factor using backward Euler for problem 1 (top), 2 (middle), and 3 (bottom) with $\alpha = 1$ (left), 0.01 (middle), and 0.0 (right). }
\label{fig:adv_diff_large}
\end{figure}

Finally, we give insight in how the minimum over $x$ is realized in the TAP.
\Cref{fig:adv_diff_x_11} and \Cref{fig:adv_diff_x_13} show the GSVD bounds (i.e.
$\varphi_{FCF}$) as a function of $x$, for backward Euler and SDIRK3,
respectively, and for each of the three problems and diffusion coefficients. A
downside of the GSVD bounds in practice is the cost of computing $||(\Psi -
\Phi^k)(I - e^{\mathrm{i}x}\Psi)^{-1}||$ for many values of $x$. As shown,
however, for the diffusion-dominated (nearly symmetric) problems, simply
choosing $x=0$ is sufficient. Interestingly, SDIRK3 bounds have almost no
dependence on $x$ for any problems, while backward Euler bounds tend to have a
non-trivial dependence on $x$ (and demonstrate the symmetry in $x$ as discussed
in \Cref{cor:symm}). Nevertheless, accurate bounds for nonsymmetric problems do
require sampling a moderate spacing of $x\in[0,\pi]$ to achieve a realistic
bound.

\begin{figure}[htb!]
\centering
\begin{subfigure}[t]{0.33\textwidth}
  \includegraphics[width=\textwidth]{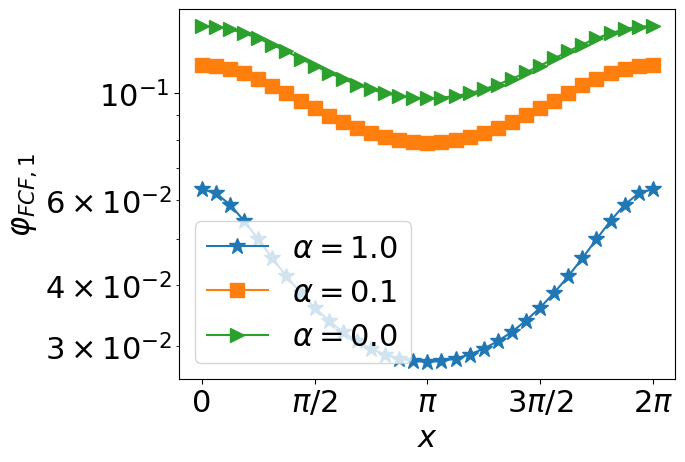}
  \caption{Problem 1}
\end{subfigure}
\begin{subfigure}[t]{0.32\textwidth}
  \includegraphics[width=\textwidth]{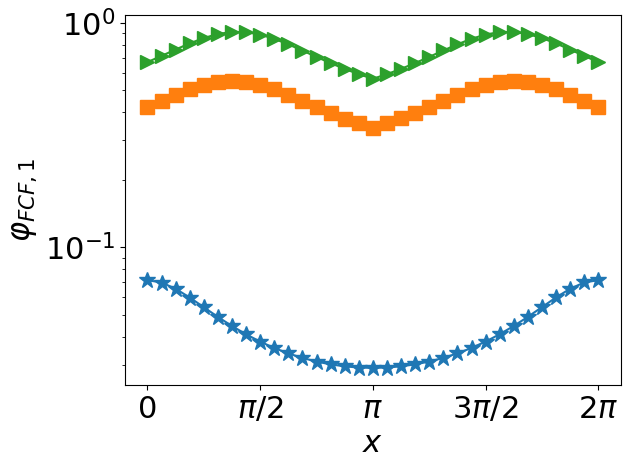}
  \caption{Problem 2}
\end{subfigure}
\begin{subfigure}[t]{0.32\textwidth}
  \includegraphics[width=\textwidth]{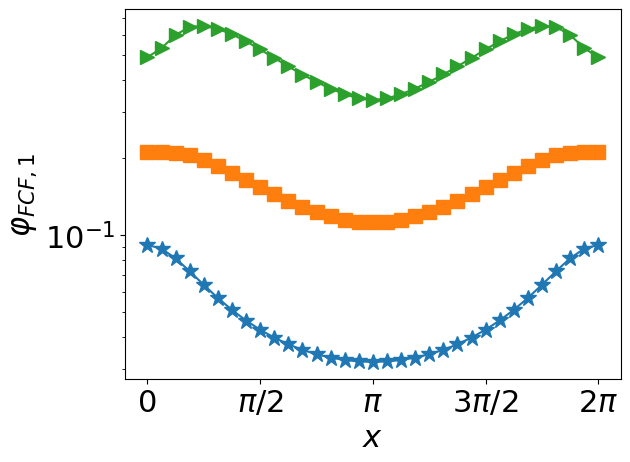}
  \caption{Problem 3}
\end{subfigure}

\caption{GSVD bounds ($\varphi_{FCF,1}$) vs. choice of $x$ with $n = 16$ and MGRIT coarsening factor 16 using backward Euler for problem 1 (top), 2 (middle), and 3 (bottom) with $\alpha = 1$ (left), 0.01 (middle), and 0.0 (right). }
\label{fig:adv_diff_x_11}
\end{figure}

\begin{figure}[htb!]
\centering
\begin{subfigure}[t]{0.33\textwidth}
   \includegraphics[width=\textwidth]{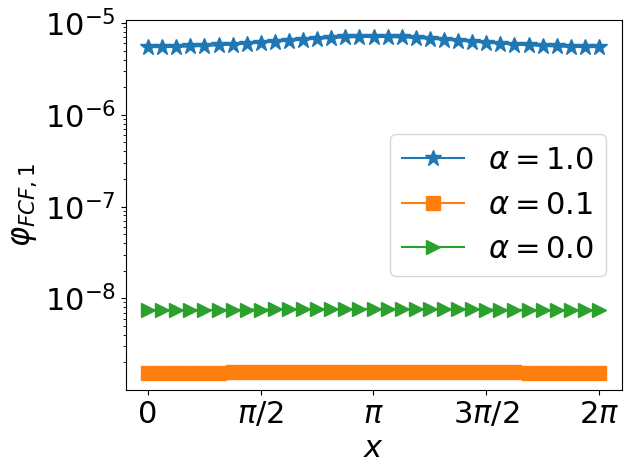}
   \caption{Problem 1}
\end{subfigure}
\begin{subfigure}[t]{0.32\textwidth}
   \includegraphics[width=\textwidth]{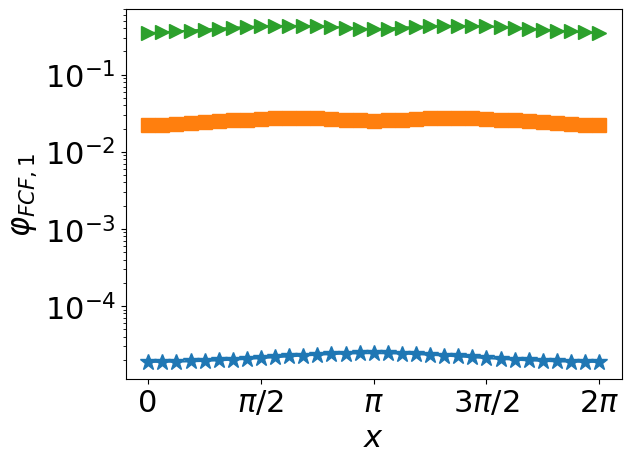}
   \caption{Problem 2}
\end{subfigure}
\begin{subfigure}[t]{0.32\textwidth}
   \includegraphics[width=\textwidth]{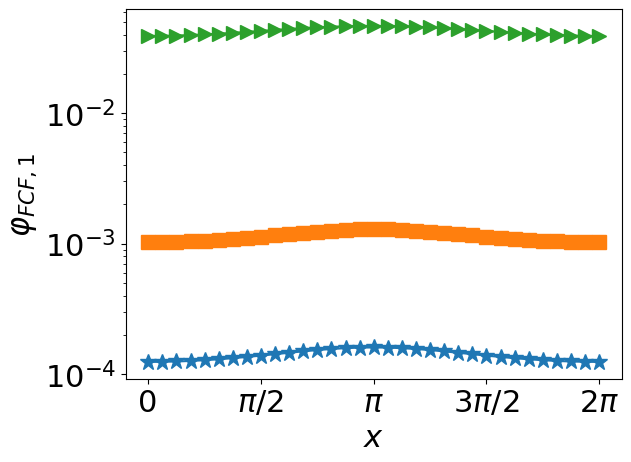}
   \caption{Problem 3}
\end{subfigure}

\caption{GSVD bounds ($\varphi_{FCF,1}$) vs. choice of $x$ with $n = 16$ and MGRIT coarsening factor 16 using SIDRK3 for problem 1 (top), 2 (middle), and 3 (bottom) with $\alpha = 1$ (left), 0.01 (middle), and 0.0 (right). }
\label{fig:adv_diff_x_13}
\end{figure}

\section{Conclusion} \label{sec:conc}

This paper furthers the theoretical understanding of convergence Parareal and MGRIT. 
A new, simpler derivation of measuring error and residual propagation operators is
provided, which applies to linear time-dependent operators, and which may be a good
starting point to develop improved convergence theory for the time-dependent setting.
Theory from \cite{southworth19} on spatial operators that are independent of time is
then reviewed, and several new results are proven, expanding the understanding of
two-level convergence of linear MGRIT and Parareal. Discretizations of the two classical
linear hyperbolic PDEs, linear advection (diffusion) and the second-order wave equation,
are then analyzed and compared with the theory. Although neither naive implementation
yields the rapid convergence desirable in practice, the theory is shown to accurately
predict convergence on highly nonsymmetric and hyperbolic operators.

\section{Appendix: Proofs} \label{sec:proofs}

\begin{proof}[Proof of \Cref{th:new2grid}]

Here we consider a slightly modified coarsening of points: let the first $k$ points be F-points, followed by a C-point,
followed by $k$ F-points, and so on, finishing with a C-point (as opposed to starting with a C-point). This is simply a
theoretical tool that permits a cleaner analysis, but is not fundamental to the result. Then,
define the so-called ideal restriction operator, $R_{\textnormal{ideal}}$ via
\begin{align*}
R_{\textnormal{ideal}} & =
\begin{bmatrix} -A_{cf}(A_{ff})^{-1} & I \end{bmatrix} \\
&=
\left[\begin{array}{@{}c c c c c c c c c : c c c c@{}}
  \Phi^{k-1} & \Phi^{k-2} &... & \Phi &&&&& & I \\
   & & & & \ddots & & & & & & \ddots \\
  &&&&& \Phi^{k-1} & \Phi^{k-2} &... & \Phi & & & I \\
\end{array}\right].
\end{align*}
Let $\widetilde{P}$ be the orthogonal (column) block permutation matrix such that
\begin{align*}
R_{\textnormal{ideal}}\widetilde{P} & =
\begin{bmatrix}
  \Phi^{k-1} &... & \Phi & I &&&&& \\
  & & & & \ddots \\
  &&&&& \Phi^{k-1} &... & \Phi & I
\end{bmatrix}
:=
\begin{bmatrix}
\mathbf{W} \\ & \ddots \\ & & \mathbf{W}
\end{bmatrix},
\end{align*}
where $\mathbf{W}$ is the block row matrix $\mathbf{W} = ( \Phi^{k-1}, ..., \Phi, I)$. Then, from \eqref{eq:err_res}
and the fact that $\|\mathcal{E}_F\|_{A^*A} = \|\mathcal{R}_F\|$ \cite{southworth19}, the norm of residual and error
propagation of MGRIT with F-relaxation is given by
\begin{align*}
\|\mathcal{E}_F\|_{A^*A} = \|\mathcal{R}_F\| & = \left\| (I - A_\Delta B_\Delta^{-1})R_{\textnormal{ideal}}\right\|
  = \left\| (I - A_\Delta B_\Delta^{-1})R_{\textnormal{ideal}}\widetilde{P}\right\|,
\end{align*}
where
\begin{align*}
    &( I - A_\Delta B_\Delta^{-1})R_{\textnormal{ideal}}\widetilde{P}
    = \textnormal{diag}\left[\Psi - \Phi^k\right]
  \begin{bmatrix} \mathbf{0} \\ I & \mathbf{0} \\ \Psi & I & \mathbf{0} \\ \Psi^2 & \Psi & I & \mathbf{0} \\
  \vdots & \ddots & \ddots & \ddots  & \ddots\end{bmatrix}
  \begin{bmatrix} \mathbf{W} \\ & \ddots \\ & & \mathbf{W}\end{bmatrix} \\
    &    = \begin{bmatrix}
  \mathbf{0} \\
  (\Psi - \Phi^k)\mathbf{W} & \mathbf{0} \\
  (\Psi - \Phi^k)\Psi\mathbf{W} & (\Psi - \Phi^k)\mathbf{W} & \mathbf{0} \\
  (\Psi - \Phi^k)\Psi^2\mathbf{W} & (\Psi - \Phi^k)\Psi\mathbf{W} & (\Psi - \Phi^k)\mathbf{W} & \mathbf{0}  \\
  \vdots & \vdots && \ddots & \ddots   \\
  (\Psi - \Phi^k)\Psi^{N_c-2}\mathbf{W} & (\Psi - \Phi^k)\Psi^{N_c-1}\mathbf{W} & ... & & (\Psi - \Phi^k)\mathbf{W} & \mathbf{0}
\end{bmatrix}.
\end{align*}
Excuse the slight abuse of notation and denote $\mathcal{R}_F : = (I - A_\Delta B_\Delta^{-1})R_{\textnormal{ideal}}\widetilde{P}$;
that is, ignore the upper zero blocks in $\mathcal{R}_F$. Define a tentative pseudoinverse, $\mathcal{R}_F^{\dagger}$, as
{\small
\begin{align*}
    \mathcal{R}_F^{\dagger} =
\begin{bmatrix}
\mathbf{0} & \widetilde{\mathbf{W}}^{-1}(\Psi-\Phi^k)^{-1}  \\
\mathbf{0} & -\widetilde{\mathbf{W}}^{-1}\Psi (\Psi-\Phi^k)^{-1} & \widetilde{\mathbf{W}}^{-1}(\Psi - \Phi^k)^{-1} \\
& & \ddots  & \ddots  \\
& & & -\widetilde{\mathbf{W}}^{-1}\Psi (\Psi-\Phi^k)^{-1} & \widetilde{\mathbf{W}}^{-1}(\Psi - \Phi^k)^{-1} \\
& & & \mathbf{0} & \mathbf{0}
\end{bmatrix}
\end{align*}
}
for some $\widetilde{\mathbf{W}}^{-1}$, and observe that
\begin{align*}
\mathcal{R}_F^{\dagger} \mathcal{R}_F & =
\begin{bmatrix} \widetilde{\mathbf{W}}^{-1}\mathbf{W} \\ & \ddots \\ & &
   \widetilde{\mathbf{W}}^{-1}\mathbf{W} \\ & & & \mathbf{0} \end{bmatrix}.
\end{align*}
Three of the four properties of a pseudoinverse require that,
\begin{align*}
    \mathcal{R}_F^{\dagger} \mathcal{R}_F \mathcal{R}_F^{\dagger} &= \mathcal{R}_F^{\dagger}, \quad
    \mathcal{R}_F \mathcal{R}_F^{\dagger} \mathcal{R}_F = \mathcal{R}_F \quad \text{and} \quad
    \left(\mathcal{R}_F^{\dagger} \mathcal{R}_F\right)^* = \mathcal{R}_F^{\dagger} \mathcal{R}_F.
\end{align*}
These three properties follow by picking $\widetilde{\mathbf{W}}^{-1}$ such that
$\left(\widetilde{\mathbf{W}}^{-1}\mathbf{W}\right)^* = \widetilde{\mathbf{W}}^{-1}\mathbf{W}$, and
$\mathbf{W}\widetilde{\mathbf{W}}^{-1}\mathbf{W} = \mathbf{W}$, $\widetilde{\mathbf{W}}^{-1}\mathbf{W} \widetilde{\mathbf{W}}^{-1}
=\widetilde{\mathbf{W}}^{-1}$.  Notice these are exactly the first three properties of a pseudoinverse of $\mathbf{W}$. To that
end, define $\widetilde{\mathbf{W}}^{-1}$ as the pseudoinverse of a full row rank operator,
\begin{align*}
\widetilde{\mathbf{W}}^{-1} = \mathbf{W}^*(\mathbf{W}\mathbf{W}^*)^{-1}.
\end{align*}
Note that here, $\mathbf{W}\widetilde{\mathbf{W}}^{-1} = I$, and the fourth property of a pseudoinverse for $\mathcal{R}_F$,
$\left(\mathcal{R}_F\mathcal{R}_F^{\dagger} \right)^* = \mathcal{R}_F\mathcal{R}_F^{\dagger}$, follows immediately.

Recall the maximum singular value of $\mathcal{R}_{F}$ is given by the minimum nonzero singular
value of $\mathcal{R}_F^\dagger$, which is equivalent to the minimum nonzero singular value of
$(\mathcal{R}_F^\dagger)^*\mathcal{R}_F^\dagger$. Following from \cite{southworth19}, the minimum nonzero eigenvalue
of $(\mathcal{R}_F^\dagger)^*\mathcal{R}_F^\dagger$ is bounded from above by the minimum
eigenvalue of a block Toeplitz matrix, with real-valued matrix generating function
{\small
\begin{align*}
F(x)
    = \left(e^{ix}\widetilde{\mathbf{W}}^{-1} \Psi(\Psi-\Phi^k) - \widetilde{\mathbf{W}}^{-1} (\Psi-\Phi^k)\right)
  \left(e^{ix}\widetilde{\mathbf{W}}^{-1} \Psi(\Psi-\Phi^k) - \widetilde{\mathbf{W}}^{-1} (\Psi-\Phi^k)\right)^*.
\end{align*}
}Let $\lambda_k(A)$ and $\sigma_k(A)$ denote the $k$th eigenvalue and singular value of some operator, $A$. Then,
\begin{align*}
\min_{\substack{x\in[0,2\pi],\\k}} \lambda_k(F(x)) & = \min_{\substack{x\in[0,2\pi],\\k}}
  \sigma_k\left(e^{ix}\widetilde{\mathbf{W}}^{-1} \Psi(\Psi-\Phi^k) - \widetilde{\mathbf{W}}^{-1} (\Psi-\Phi^k)\right)^2 \\
& = \min_{\substack{x\in[0,2\pi], \\ \mathbf{v}\neq \mathbf{0}}} \frac{\left\| \left(e^{ix}\widetilde{\mathbf{W}}^{-1} \Psi(\Psi-\Phi^k) -
  \widetilde{\mathbf{W}}^{-1} (\Psi-\Phi^k)\right) \mathbf{v}\right\|^2}{\|\mathbf{v}\|^2} \\
& =  \min_{\substack{x\in[0,2\pi], \\ \mathbf{v}\neq \mathbf{0}}} \frac{\left\| \widetilde{\mathbf{W}}^{-1}(e^{ix}\Psi - I)\mathbf{v}\right\|^2}{\|(\Psi - \Phi^k)\mathbf{v}\|^2} \\
& =  \min_{\substack{x\in[0,2\pi], \\ \mathbf{v}\neq \mathbf{0}}} \frac{\left\|
  (\mathbf{W}\mathbf{W}^*)^{-1/2}(e^{ix}\Psi - I)\mathbf{v}\right\|^2}{\|(\Psi - \Phi^k)\mathbf{v}\|^2}, \\
\sigma_{min}\left(\mathcal{R}_F^\dagger\right) & \leq \min_{\substack{x\in[0,2\pi], \\ \mathbf{v}\neq \mathbf{0}}} \frac{\left\| (\mathbf{W}\mathbf{W}^*)^{-1/2}(e^{ix}\Psi - I)\mathbf{v}\right\|^2}{\|(\Psi - \Phi^k)\mathbf{v}\|} + O(1/N_c).
\end{align*}

Then,
\begin{align*}
\left\|\mathcal{E}_{F}\right\|_{A^*A} = \left\|\mathcal{R}_{F}\right\| & \geq \frac{1}{\sqrt{\min_{\substack{x\in[0,2\pi], \\ \mathbf{v}\neq \mathbf{0}}}
  \frac{\left\|(\mathbf{W}\mathbf{W}^*)^{-1/2}(e^{ix}\Psi - I)\mathbf{v}\right\|^2}{\|(\Psi - \Phi^k)\mathbf{v}\|^2} + O(1/N_c)}} \\
& \geq \frac{1}{\min_{\substack{x\in[0,2\pi], \\ \mathbf{v}\neq \mathbf{0}}}
  \frac{\left\|(\mathbf{W}\mathbf{W}^*)^{-1/2}(e^{ix}\Psi - I)\mathbf{v}\right\|^2}{\|(\Psi - \Phi^k)\mathbf{v}\|^2}+ O(1/\sqrt{N_c})} \\
& = \max_{\mathbf{v}\neq \mathbf{0}} \frac{\|(\Psi - \Phi^k)\mathbf{v}\|}{\min_{x\in[0,2\pi]} \left\|(\mathbf{W}\mathbf{W}^*)^{-1/2}(e^{ix}\Psi - I)\mathbf{v}\right\|+O(1/\sqrt{N_c})},
\end{align*}
and the result for $\mathcal{E}_F$ follows.\footnote{More detailed steps for this proof involving the block Toeplitz matrix and
generating function can be found in similar derivations in \cite{southworth19,Serra:1998fm,Serra:1999ed,Miranda:2000is}.}

The case of $\mathcal{E}_{FCF}$ follows an identical derivation with the modified operator $\widehat{\mathbf{W}}=(\Phi^{2k-1},...,\Phi^k)$,
which follows from the right multiplication by $A_{cf}A_{ff}^{-1}A_{fc}$ in $\mathcal{R}_{FCF} = (I - A_\Delta B_\Delta^{-1})A_{cf}A_{ff}^{-1}A_{fc}R_{\textnormal{ideal}}$,
which is effectively a right-diagonal scaling by $\Phi^k$. The cases of error propagation in the $\ell^2$-norm follow a similar derivation based on $P_{ideal}$.
\end{proof}
\begin{proof}[Proof of \Cref{cor:new2grid}]
The derivations follow a similar path as those in \Cref{th:new2grid}. However, when considering Toeplitz operators
defined over the complex scalars (eigenvalues) as opposed to operators, additional results hold. In particular, the previous
lower bound (that is, necessary condition) is now a tight bound in norm, which follows from a closed form for the eigenvalues
of a perturbation to the first or last entry in a tridiagonal Toeplitz matrix \cite{Losonczi1992,DafonsecaKowalenko2019}.
Scalar values also lead to a
tighter asymptotic bound, $O(1/N_c)$ as opposed to $O(1/\sqrt{N_c})$, which is derived from the existence of a second-order
zero of $F(x) - \min_{y\in[0,2\pi]} F(y)$, when the Toeplitz generating function $F(x)$ is defined over complex scalars as
opposed to operators \cite{Serra:1998fm}. Analogous derivations for each of these steps can be found in the diagonalizable case in
\cite{southworth19}, and the steps follow easily when coupled with the pseduoinverse derived in \Cref{th:new2grid}.

Then, noting that
\begin{align*}
    \mathcal{W}_F
    &= \sqrt{\sum_{\ell=0}^{k-1} (|\lambda_i|^2)^\ell}
    = \sqrt{\frac{1-|\lambda_i|^{2 k}}{1 - |\lambda_i|^2}}, \\
    \mathcal{W}_{FCF}
    &= \sqrt{\sum_{\ell=k}^{2 k-1} (|\lambda_i|^2)^\ell}
    = |\lambda_i|^{k} \sqrt{\frac{1-|\lambda_i|^{2k}}{(1 - |\lambda_i|^2}} ,
\end{align*}
and substituting $\lambda_i$ for $\Phi$ and $\mu_i$ for $\Psi$ in \Cref{th:new2grid}, the result follows.
\end{proof}

\section*{Acknowledgments}
Los Alamos National Laboratory report number LA-UR-20-28395. This work was supported
by Lawrence Livermore National Laboratory under contract B634212, and under the
Nicholas C. Metropolis Fellowshop from the Laboratory Directed Research and
Development program of Los Alamos National Laboratory.

\bibliographystyle{plain}
\bibliography{main.bib}

\end{document}